\title{Geometry and arithmetic on the Siegel-Jacobi Space}
\begin{document}

\author{Jae-Hyun Yang}

\address{Department of Mathematics, Inha University,
Incheon 402-751, Korea}
\email{jhyang@inha.ac.kr }


\newtheorem{theorem}{Theorem}[section]
\newtheorem{lemma}{Lemma}[section]
\newtheorem{proposition}{Proposition}[section]
\newtheorem{remark}{Remark}[section]
\newtheorem{definition}{Definition}[section]

\renewcommand{\theequation}{\thesection.\arabic{equation}}
\renewcommand{\thetheorem}{\thesection.\arabic{theorem}}
\renewcommand{\thelemma}{\thesection.\arabic{lemma}}
\newcommand{\BR}{\mathbb R}
\newcommand{\BQ}{\mathbb Q}
\newcommand{\BT}{\mathbb T}
\newcommand{\BM}{\mathbb M}
\newcommand{\bn}{\bf n}
\def\charf {\mbox{{\text 1}\kern-.24em {\text l}}}
\newcommand{\BC}{\mathbb C}
\newcommand{\BZ}{\mathbb Z}

\thanks{\noindent{Subject Classification:} Primary 11F30, 11F55, 11Fxx, 13A50, 15A72, 32F45, 32M10, 32Wxx\\
\indent Keywords and phrases: Jacobi group, Siegel-Jacobi space, Invariant metrics, Laplacians, Invariant differential operators,
Partial Cayley transform, Siegel-Jacobi disk, Jacobi forms, Siegel-Jacobi operator, Schr{\"o}dinger-Weil representation,
Maass-Jacobi forms, Theta sums.
\\ \indent
The author was supported by Basic Science Program through the National Research Foundation of Korea
(NRF) funded by the Ministry of Education, Science
and Technology\,(47724-1)}


\begin{abstract}
{The Siegel-Jacobi space is a non-symmetric homogeneous space which is very important geometrically and arithmetically.
In this paper, we discuss the theory of the geometry and the arithmetic of the Siegel-Jacobi space.}
\end{abstract}

\maketitle

\newcommand\tr{\triangleright}
\newcommand\al{\alpha}
\newcommand\be{\beta}
\newcommand\g{\gamma}
\newcommand\gh{\Cal G^J}
\newcommand\G{\Gamma}
\newcommand\de{\delta}
\newcommand\e{\epsilon}
\newcommand\z{\zeta}
\newcommand\vth{\vartheta}
\newcommand\vp{\varphi}
\newcommand\om{\omega}
\newcommand\p{\pi}
\newcommand\la{\lambda}
\newcommand\lb{\lbrace}
\newcommand\lk{\lbrack}
\newcommand\rb{\rbrace}
\newcommand\rk{\rbrack}
\newcommand\s{\sigma}
\newcommand\w{\wedge}
\newcommand\fgj{{\frak g}^J}
\newcommand\lrt{\longrightarrow}
\newcommand\lmt{\longmapsto}
\newcommand\lmk{(\lambda,\mu,\kappa)}
\newcommand\Om{\Omega}
\newcommand\ka{\kappa}
\newcommand\ba{\backslash}
\newcommand\ph{\phi}
\newcommand\M{{\Cal M}}
\newcommand\bA{\bold A}
\newcommand\bH{\bold H}
\newcommand\D{\Delta}

\newcommand\Hom{\text{Hom}}
\newcommand\cP{\Cal P}

\newcommand\cH{\Cal H}

\newcommand\pa{\partial}

\newcommand\pis{\pi i \sigma}
\newcommand\sd{\,\,{\vartriangleright}\kern -1.0ex{<}\,}
\newcommand\wt{\widetilde}
\newcommand\fg{\frak g}
\newcommand\fk{\frak k}
\newcommand\fp{\frak p}
\newcommand\fs{\frak s}
\newcommand\fh{\frak h}
\newcommand\Cal{\mathcal}

\newcommand\fn{{\frak n}}
\newcommand\fa{{\frak a}}
\newcommand\fm{{\frak m}}
\newcommand\fq{{\frak q}}
\newcommand\CP{{\mathcal P}_n}
\newcommand\Hnm{{\mathbb H}_n \times {\mathbb C}^{(m,n)}}
\newcommand\BD{\mathbb D}
\newcommand\BH{\mathbb H}
\newcommand\CCF{{\mathcal F}_n}
\newcommand\CM{{\mathcal M}}
\newcommand\Gnm{\Gamma_{n,m}}
\newcommand\Cmn{{\mathbb C}^{(m,n)}}
\newcommand\Yd{{{\partial}\over {\partial Y}}}
\newcommand\Vd{{{\partial}\over {\partial V}}}

\newcommand\Ys{Y^{\ast}}
\newcommand\Vs{V^{\ast}}
\newcommand\LO{L_{\Omega}}
\newcommand\fac{{\frak a}_{\mathbb C}^{\ast}}

\begin{center}
\textit{To the memory of my teacher, Professor Shoshichi Kobayashi}
\end{center}
\vskip 2.1cm

\centerline{\large \bf Table of Contents}

\vskip 0.5cm $ \qquad\qquad\qquad\qquad\textsf{\large \ 1.
Introduction}$\vskip 0.021cm


$\qquad\qquad\qquad\qquad \textsf{\large\ 2. Invariant Metrics and Laplacians on the Siegel-Jacobi Space }$
\vskip 0.021cm

$ \qquad\qquad\qquad\qquad  \textsf{\large\ 3. Invariant Differential Operators on the Siegel-Jacobi Space }$
\vskip 0.021cm

$ \qquad\qquad\qquad\qquad \textsf{\large\ 4. The Partial Cayley Transform}$
\vskip 0.021cm

$ \qquad\qquad\qquad\qquad  \textsf{\large\ 5. Invariant Metrics and Laplacians on the Siegel-Jacobi Disk}$
\vskip 0.021cm

$ \qquad\qquad\qquad\qquad \textsf{\large\ 6. A Fundamental Domain for the Siegel-Jacobi Space}$
\vskip 0.021cm

$ \qquad\qquad\qquad\qquad \textsf{\ 7. Jacobi Forms } $
\vskip 0.021cm
\par

$ \qquad\qquad\qquad\qquad \textsf{\large\ 8. Singular Jacobi Forms}$
\vskip 0.021cm

$ \qquad\qquad\qquad\qquad  \textsf{\large \ 9. The Siegel-Jacobi Operator}$

$ \qquad\qquad\qquad\qquad  \textsf{\large 10. Construction of Vector-Valued Modular Forms from Jacobi Forms }$
\vskip 0.021cm

$ \qquad\qquad\qquad\qquad  \textsf{\large 11. Maass-Jacobi Forms}$
\vskip 0.021cm

$ \qquad\qquad\qquad\qquad  \textsf{\large 12 The Schr{\"o}dinger-Weil Representation  }$
\vskip 0.021cm

$ \qquad\qquad\qquad\qquad  \textsf{\large 13. Final Remarks and Open Problems}$
\vskip 0.021cm

$ \qquad\qquad\qquad\qquad\textsf{\large Acknowledgements }$

$ \qquad\qquad\qquad\qquad\textsf{\large References }$

\newpage

\begin{section}{{\bf Introduction}}
\setcounter{equation}{0} For a given fixed positive integer $n$,
we let
$${\mathbb H}_n=\,\{\,\Omega\in \BC^{(n,n)}\,|\ \Om=\,^t\Om,\ \ \ \text{Im}\,\Om>0\,\}$$
be the Siegel upper half plane of degree $n$ and let
$$Sp(n,\BR)=\{ M\in \BR^{(2n,2n)}\ \vert \ ^t\!MJ_nM= J_n\ \}$$
be the symplectic group of degree $n$, where $F^{(k,l)}$ denotes
the set of all $k\times l$ matrices with entries in a commutative
ring $F$ for two positive integers $k$ and $l$, $^t\!M$ denotes
the transposed matrix of a matrix $M$ and
$$J_n=\begin{pmatrix} 0&I_n\\
                   -I_n&0\end{pmatrix}.$$
Then $Sp(n,\BR)$ acts on $\BH_n$ transitively by
\begin{equation}
M\cdot\Om=(A\Om+B)(C\Om+D)^{-1},
\end{equation} where $M=\begin{pmatrix} A&B\\
C&D\end{pmatrix}\in Sp(n,\BR)$ and $\Om\in \BH_n.$ Let
$$\G_n=Sp(n,\BZ)=\left\{ \begin{pmatrix} A&B\\
C&D\end{pmatrix}\in Sp(n,\BR) \,\big| \ A,B,C,D\
\textrm{integral}\ \right\}$$ be the Siegel modular group of
degree $n$. This group acts on $\BH_n$ properly discontinuously.
C. L. Siegel investigated the geometry of $\BH_n$ and automorphic
forms on $\BH_n$ systematically. Siegel\,\cite{Si1} found a
fundamental domain ${\mathcal F}_n$ for $\G_n\ba\BH_n$ and
described it explicitly. Moreover he calculated the volume of
$\CCF.$ We also refer to \cite{Ig},\,\cite{M2},\,\cite{Si2} for
some details on $\CCF.$

\vskip 0.1cm For two
positive integers $m$ and $n$, we consider the Heisenberg group
$$H_{\BR}^{(n,m)}=\big\{\,(\la,\mu;\ka)\,|\ \la,\mu\in \BR^{(m,n)},\ \kappa\in\BR^{(m,m)},\
\ka+\mu\,^t\la\ \text{symmetric}\ \big\}$$ endowed with the
following multiplication law
$$\big(\la,\mu;\ka\big)\circ \big(\la',\mu';\ka'\big)=\big(\la+\la',\mu+\mu';\ka+\ka'+\la\,^t\mu'-
\mu\,^t\la'\big)$$
with $\big(\la,\mu;\ka\big),\big(\la',\mu';\ka'\big)\in H_{\BR}^{(n,m)}.$
We define the {\it Jacobi group} $G^J$ of degree $n$ and index $m$ that is the semidirect product of
$Sp(n,\BR)$ and $H_{\BR}^{(n,m)}$
$$G^J=Sp(n,\BR)\ltimes H_{\BR}^{(n,m)}$$
endowed with the following multiplication law
$$
\big(M,(\lambda,\mu;\kappa)\big)\cdot\big(M',(\lambda',\mu';\kappa'\,)\big)
=\, \big(MM',(\tilde{\lambda}+\lambda',\tilde{\mu}+ \mu';
\kappa+\kappa'+\tilde{\lambda}\,^t\!\mu'
-\tilde{\mu}\,^t\!\lambda'\,)\big)$$ with $M,M'\in Sp(n,\BR),
(\lambda,\mu;\kappa),\,(\lambda',\mu';\kappa') \in
H_{\BR}^{(n,m)}$ and
$(\tilde{\lambda},\tilde{\mu})=(\lambda,\mu)M'$. Then $G^J$ acts
on $\BH_n\times \BC^{(m,n)}$ transitively by
\begin{equation}
\big(M,(\lambda,\mu;\kappa)\big)\cdot
(\Om,Z)=\Big(M\cdot\Om,(Z+\lambda \Om+\mu)
(C\Omega+D)^{-1}\Big), \end{equation} where $M=\begin{pmatrix} A&B\\
C&D\end{pmatrix} \in Sp(n,\BR),\ (\lambda,\mu; \kappa)\in
H_{\BR}^{(n,m)}$ and $(\Om,Z)\in \BH_n\times \BC^{(m,n)}.$ We note
that the Jacobi group $G^J$ is {\it not} a reductive Lie group and
the homogeneous space ${\mathbb H}_n\times \BC^{(m,n)}$ is not a
symmetric space. From now on, for brevity we write
$\BH_{n,m}=\BH_n\times \BC^{(m,n)}.$ The homogeneous space $\BH_{n,m}$ is called the
{\it Siegel-Jacobi space} of degree $n$ and index $m$.

\vskip 0.21cm
The aim of this paper is to discuss and survey the geometry and the arithmetic of the Siegel-Jacobi space $\BH_{n,m}$.
This article is organized as follows. In Section 2, we provide Riemannian metrics which are invariant under the action (1.2) of the Jacobi group and
their Laplacians. In Section 3, we discuss $G^J$-invariant differential operators on the Siegel-Jacobi space and give some related results.
In Section 4, we describe the partial Cayley transform of the Siegel-Jacobi disk onto the Siegel-Jacobi space which gives a partially bounded realization of the
Siegel-Jacobi space. We provide a compatibility result of a partial Cayley transform. In Section 5, we provide Riemannian metrics on the Siegel-Jacobi disk
which is invariant under the action (4.8) of the Jacobi group $G^J_*$ and their Laplacians using the partial Cayley transform. In Section 6, we find a fundamental domain for the
Siegel-Jacobi space with respect to the Siegel-Jacobi modular group. In Section 7, we give the canonical automorphic factor for the Jacobi group $G^J$ which is
obtained by a geometrical method and review the concept of Jacobi forms. In Section 8, we characterize singular Jacobi forms in terms of a certain differential
operator and their weights. In Section 9, we define the notion of the Siegel-Jacobi operator. We give the result about the compatibility with the Hecke-Jacobi operator.
In Section 10, we differentiate a given Jacobi form with respect to the toroidal variables by applying a homogeneous pluriharmonic
differential operator to a Jacobi form and then obtain a vector-valued modular form of a new weight. As an application, we provide an identity for an Eisenstein series.
In Section 11, we discuss the notion of Maass-Jacobi forms. In Section 12, we construct the Schr{\"o}dinger-Weil representation and give some results on
theta sums constructed from the Schr{\"o}dinger-Weil representation. In Section 13, we give some remarks and propose some open problems about the geometry and the arithmetic of
the Siegel-Jacobi space.

\vskip 0.51cm \noindent {\bf Notations:} \ \ We denote by
$\BQ,\,\BR$ and $\BC$ the field of rational numbers, the field of
real numbers and the field of complex numbers respectively. We
denote by $\BZ$ and $\BZ^+$ the ring of integers and the set of
all positive integers respectively. The symbol ``:='' means that
the expression on the right is the definition of that on the left.
For two positive integers $k$ and $l$, $F^{(k,l)}$ denotes the set
of all $k\times l$ matrices with entries in a commutative ring
$F$. For a square matrix $A\in F^{(k,k)}$ of degree $k$,
$\sigma(A)$ denotes the trace of $A$. For any $M\in F^{(k,l)},\
^t\!M$ denotes the transpose of a matrix $M$. $I_n$ denotes the
identity matrix of degree $n$. For $A\in F^{(k,l)}$ and $B\in
F^{(k,k)}$, we set $B[A]=\,^tABA.$ For a complex matrix $A$,
${\overline A}$ denotes the complex {\it conjugate} of $A$. For
$A\in \BC^{(k,l)}$ and $B\in \BC^{(k,k)}$, we use the abbreviation
$B\{ A\}=\,^t{\overline A}BA.$
For a number field $F$, we denote
by ${\mathbb A}_F$ the ring of adeles of $F$. If $F=\BQ$, the
subscript will be omitted.

\vskip 0.1cm

\end{section}

\begin{section}{{\bf Invariant Metrics and Laplacians on the Siegel-Jacobi Space}}
\setcounter{equation}{0}
\newcommand\POB{ {{\partial}\over {\partial{\overline \Omega}}} }
\newcommand\PZB{ {{\partial}\over {\partial{\overline Z}}} }
\newcommand\PX{ {{\partial}\over{\partial X}} }
\newcommand\PY{ {{\partial}\over {\partial Y}} }
\newcommand\PU{ {{\partial}\over{\partial U}} }
\newcommand\PV{ {{\partial}\over{\partial V}} }
\newcommand\PO{ {{\partial}\over{\partial \Omega}} }
\newcommand\PZ{ {{\partial}\over{\partial Z}} }

\vskip 0.21cm For $\Om=(\omega_{ij})\in\BH_n,$ we write $\Om=X+iY$
with $X=(x_{ij}),\ Y=(y_{ij})$ real. We put $d\Om=(d\om_{ij})$ and $d{\overline\Om}=(d{\overline\om}_{ij})$. We
also put
$$\PO=\,\left(\,
{ {1+\delta_{ij}}\over 2}\, { {\partial}\over {\partial \om_{ij} }
} \,\right) \qquad\text{and}\qquad \POB=\,\left(\, {
{1+\delta_{ij}}\over 2}\, { {\partial}\over {\partial {\overline
{\om}}_{ij} } } \,\right).$$ C. L. Siegel \cite{Si1} introduced
the symplectic metric $ds_{n;A}^2$ on $\BH_n$ invariant under the action
(1.1) of $Sp(n,\BR)$ that is given by
\begin{equation}
ds_{n;A}^2=A\,\s (Y^{-1}d\Om\, Y^{-1}d{\overline\Om}),\qquad A>0
\end{equation}
and H.
Maass \cite{M1} proved that its Laplacian is given by
\begin{equation}
\Delta_{n;A}=\,{4\over A}\,\s \left(\,Y\,
{}^{{}^{{}^{{}^\text{\scriptsize $t$}}}}\!\!\!
\left(Y\POB\right)\PO\right).\end{equation} And
\begin{equation}
dv_n(\Om)=(\det Y)^{-(n+1)}\prod_{1\leq i\leq j\leq n}dx_{ij}\,
\prod_{1\leq i\leq j\leq n}dy_{ij}\end{equation} is a
$Sp(n,\BR)$-invariant volume element on
$\BH_n$\,(cf.\,\cite{Si3},\,p.\,130).

\vskip 0.2cm
For a coordinate
$(\Om,Z)\in \BH_{n,m}$ with $\Om=(\omega_{\mu\nu})$ and
$Z=(z_{kl})$, we put $d\Om,\,d{\overline \Om},\,\PO,\,\POB$ as
before and set
\begin{eqnarray*}
Z\,&=&U\,+\,iV,\quad\ \ U\,=\,(u_{kl}),\quad\ \ V\,=\,(v_{kl})\ \
\text{real},\\
dZ\,&=&\,(dz_{kl}),\quad\ \ d{\overline Z}=(d{\overline z}_{kl}),
\end{eqnarray*}
$$\PZ=\begin{pmatrix} {\partial}\over{\partial z_{11}} & \hdots &
 {\partial}\over{\partial z_{m1}} \\
\vdots&\ddots&\vdots\\
 {\partial}\over{\partial z_{1n}} &\hdots & {\partial}\over
{\partial z_{mn}} \end{pmatrix},\quad \PZB=\begin{pmatrix}
{\partial}\over{\partial {\overline z}_{11} }   &
\hdots&{ {\partial}\over{\partial {\overline z}_{m1} }  }\\
\vdots&\ddots&\vdots\\
{ {\partial}\over{\partial{\overline z}_{1n} }  }&\hdots &
 {\partial}\over{\partial{\overline z}_{mn} }  \end{pmatrix}.$$

\newcommand\bw{d{\overline W}}
\newcommand\bz{d{\overline Z}}
\newcommand\bo{d{\overline \Omega}}

\vskip 0.3cm
 Yang proved the following theorems in \cite{YJH9}.

\begin{theorem} For any two positive real numbers
$A$ and $B$,
\begin{eqnarray}
ds_{n,m;A,B}^2&=&\,A\, \sigma\Big( Y^{-1}d\Om\,Y^{-1}d{\overline
\Om}\Big) \nonumber \\
&& \ \ + \,B\,\bigg\{ \sigma\Big(
Y^{-1}\,^tV\,V\,Y^{-1}d\Om\,Y^{-1} d{\overline \Om} \Big)
 +\,\sigma\Big( Y^{-1}\,^t(dZ)\,\bz\Big) \nonumber\\
&&\quad\quad -\sigma\Big( V\,Y^{-1}d\Om\,Y^{-1}\,^t(\bz)\Big)\,
-\,\sigma\Big( V\,Y^{-1}d{\overline \Om}\, Y^{-1}\,^t(dZ)\,\Big)
\bigg\} \nonumber
\end{eqnarray}
is a Riemannian metric on $\BH_{n,m}$ which is invariant under the action (1.2) of $G^J.$
In fact, $ds_{n,m;A,B}^2$ is a K{\"a}hler metric of $\BH_{n,m}.$
\end{theorem}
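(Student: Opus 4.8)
The plan is to verify $G^J$-invariance by checking invariance under a generating set of $G^J$, and then to exhibit an explicit K\"ahler potential. Since $G^J = Sp(n,\BR)\ltimes H_{\BR}^{(n,m)}$ is generated (modulo the already-understood action of $Sp(n,\BR)$ on the Siegel factor) by three kinds of elements — (i) symplectic matrices $M=\begin{pmatrix}A&B\\C&D\end{pmatrix}$ acting by $(\Om,Z)\mapsto (M\cdot\Om, Z(C\Om+D)^{-1})$, (ii) the ``translations'' $(\la,\mu;0)$ acting by $(\Om,Z)\mapsto(\Om, Z+\la\Om+\mu)$, and (iii) the central elements $(0,0;\ka)$ which act trivially on $\BH_{n,m}$ — it suffices to treat (i) and (ii). First I would record the transformation rule for $V=\mathrm{Im}\,Z$ and for $Y=\mathrm{Im}\,\Om$ under each generator: under (ii) one has $Y\mapsto Y$ and $V\mapsto V+\la Y$, together with $d\Om\mapsto d\Om$ and $dZ\mapsto dZ+\la\,d\Om$; under (i) one uses Siegel's classical identities $\mathrm{Im}(M\cdot\Om) = {}^t(C\Om+D)^{-1}Y(C\overline\Om+D)^{-1}$ and $d(M\cdot\Om) = {}^t(C\Om+D)^{-1}d\Om\,(C\Om+D)^{-1}$, plus the induced rule for $Z(C\Om+D)^{-1}$ and its differential.

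The bulk of the argument is then a substitution check. For the $Sp(n,\BR)$-part, the $A$-term is exactly Siegel's metric $ds_{n;A}^2$ and is invariant by (2.1); for the $B$-terms one substitutes the above transformation formulas and watches the factors $(C\Om+D)^{-1}$, $(C\overline\Om+D)^{-1}$ and their transposes cancel inside the traces by cyclicity of $\sigma$ — here it is cleanest to package the four $B$-terms as a single trace of the form $\sigma\big(Y^{-1}(\,^t(dZ) - \overline Z\,Y^{-1}d\Om)(\overline{dZ} - \cdots)\big)$ type expression, or more symmetrically to recognize the whole $B$-bracket as $\sigma\big(Y^{-1}\,{}^t\mathcal{E}\,\overline{\mathcal E}\big)$ where $\mathcal E := dZ - V Y^{-1} d\Om$ is a $(m,n)$-matrix of $1$-forms that transforms simply (in fact covariantly, like a weight-one object twisted by $(C\Om+D)^{-1}$) under $G^J$. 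Checking that $\mathcal E$ transforms as claimed under both (i) and (ii), and that $Y^{-1}$ supplies the compensating factor, reduces the whole invariance statement to one clean line.

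For the K\"ahler assertion I would produce a potential. The natural candidate, guided by the shape of the metric and by the known potential $-\log\det Y$ for $ds_{n;A}^2$, is
$$
\mathcal K(\Om,Z) = -A\,\log\det Y \;+\; B\,\sigma\!\big(V\,Y^{-1}\,{}^tV\big),
\qquad V=\mathrm{Im}\,Z,\ Y=\mathrm{Im}\,\Om,
$$
and the claim is that $ds_{n,m;A,B}^2$ equals (up to the usual normalizing constant) the Hermitian form $\sum \partial^2\mathcal K/\partial\zeta_i\partial\overline\zeta_j\, d\zeta_i\, d\overline\zeta_j$ in the complex coordinates $\zeta=(\om_{\mu\nu},z_{kl})$. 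One computes $\partial\overline\partial$ of each summand: $\partial\overline\partial(-\log\det Y)$ reproduces the $A$-term (Siegel), and $\partial\overline\partial\,\sigma(VY^{-1}\,{}^tV)$ must be shown to reproduce exactly the four $B$-terms; since $V = (Z-\overline Z)/(2i)$ and $Y=(\Om-\overline\Om)/(2i)$ are real-analytic in $(\Om,Z,\overline\Om,\overline Z)$, this is a direct (if slightly lengthy) application of matrix differentiation identities such as $\partial\, Y^{-1} = -Y^{-1}(\partial Y)Y^{-1}$ and $\partial V/\partial z_{kl}$, $\partial V/\partial\overline z_{kl}$ being constant. One then reads off that the mixed Hessian is Hermitian and, because it equals the Riemannian metric just shown to be positive definite, positive definite — so $\mathcal K$ is a (strictly plurisubharmonic) K\"ahler potential, and $ds_{n,m;A,B}^2$ is K\"ahler. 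Closedness of the associated $(1,1)$-form is then automatic, $\omega = i\,\partial\overline\partial\mathcal K$.

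The main obstacle is bookkeeping rather than conceptual: correctly tracking transposes, the ordering of matrix factors inside the traces, and the non-commutativity when substituting the $Sp(n,\BR)$-transformation of $Z(C\Om+D)^{-1}$ into the cross-terms $\sigma(VY^{-1}d\Om\,Y^{-1}\,^t(\overline{dZ}))$ and $\sigma(VY^{-1}d\Om\,Y^{-1}\,^t(\overline{dZ}))$. Introducing the auxiliary $1$-form matrix $\mathcal E = dZ - VY^{-1}d\Om$ early, and verifying its covariance once and for all, is the device that keeps this under control; after that both the invariance and the identification of the Hessian with the metric become short computations.
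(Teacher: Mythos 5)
The paper itself offers no proof of this theorem --- it simply cites Theorem 1.1 of the author's earlier paper [YJH9], where the invariance is established by direct substitution of the transformation formulas for $Y$, $V$, $d\Omega$, $dZ$ under the two kinds of generators. Your proposal is a genuine proof sketch of the same statement, and its central device is sound and, in my view, cleaner than the brute-force check. The key identity you assert does hold exactly: with $\mathcal{E}:=dZ-VY^{-1}d\Omega$ one has ${}^t\mathcal{E}=\,{}^t(dZ)-d\Omega\,Y^{-1}\,{}^tV$, and expanding $\sigma\big(Y^{-1}\,{}^t\mathcal{E}\,\overline{\mathcal{E}}\big)$ reproduces precisely the four $B$-terms of the metric (the reordering needed to match the printed form uses only $\sigma(A)=\sigma({}^tA)$ and cyclicity of the trace). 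The covariance you need is $\mathcal{E}\mapsto\mathcal{E}\,(C\Omega+D)^{-1}$ under $Sp(n,\mathbb{R})$ and $\mathcal{E}\mapsto\mathcal{E}$ under $(\lambda,\mu;\kappa)$; the first reduces, after inserting $d Z_{*}=dZ\,J^{-1}-ZJ^{-1}C\,d\Omega\,J^{-1}$ and $Y_{*}^{-1}=\overline{J}\,Y^{-1}\,{}^tJ$ with $J=C\Omega+D$, to the one-line identity $J-\overline{J}=2iCY$, and the second follows from $V\mapsto V+\lambda Y$. Since $Y_{*}^{-1}=\overline{J}\,Y^{-1}\,{}^tJ$ exactly absorbs the two factors of $J^{-1}$, invariance of the $B$-bracket follows, and the $A$-term is Siegel's metric. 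So the invariance half of your argument is correct and complete in outline.

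Two points deserve tightening. First, you invoke positive definiteness ("the Riemannian metric just shown to be positive definite") without having shown it; but your own framework gives it immediately --- by transitivity of the $G^J$-action it suffices to check the base point $(iI_n,0)$, where $V=0$, $\mathcal{E}=dZ$, and the form degenerates to $A\,\sigma(d\Omega\,d\overline{\Omega})+B\,\sigma({}^t(dZ)\,d\overline{Z})$, which is manifestly positive definite. You should say this explicitly rather than appeal to something not yet established. Second, the K\"ahler potential $\mathcal{K}=-A\log\det Y+B\,\sigma(VY^{-1}\,{}^tV)$ is indeed correct up to the normalization constant $4$ coming from $\partial Y/\partial\overline{\Omega}$ and $\partial V/\partial\overline{Z}$ being multiples of $i/2$; a spot check at $n=m=1$ (where $\mathcal{K}_B=v^2/y$ gives mixed Hessian entries $\tfrac{1}{2y}$ and $-\tfrac{v}{2y^2}$) confirms the cross-term signs, but you should fix one convention for the passage from the Hermitian Hessian to the real symmetric tensor and carry it through all terms, since the factor-of-two bookkeeping is exactly where such verifications usually go wrong. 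With those two repairs the argument is complete, and the $\mathcal{E}$-packaging plus explicit potential arguably buys more than the reference's computation: it makes both the invariance and the K\"ahler property one-line consequences of a single covariance statement.
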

\vskip 1mm\noindent
{\it Proof.} See Theorem 1.1 in \cite{YJH9}. \hfill $\Box$

\vskip 3mm

\begin{theorem} The Laplacian $\Delta_{m,m;A,B}$ of the $G^J$-invariant metric $ds_{n,m;A,B}^2$ is given by
\begin{equation}
\Delta_{n,m;A,B}=\,{\frac 4A}\,{\mathbb M}_1 + {\frac 4B}
{\mathbb M}_2,
\end{equation}
where
\begin{eqnarray*}
{\mathbb M}_1\,&=&  \sigma\left(\,Y\,
{}^{{}^{{}^{{}^\text{\scriptsize $t$}}}}\!\!\!
\left(Y\POB\right)\PO\,\right)\, +\,\sigma\left(\,VY^{-1}\,^tV\,
{}^{{}^{{}^{{}^\text{\scriptsize $t$}}}}\!\!\!
\left(Y\PZB\right)\,\PZ\,\right)\\
& &\ \
+\,\sigma\left(V\,
{}^{{}^{{}^{{}^\text{\scriptsize $t$}}}}\!\!\!
\left(Y\POB\right)\PZ\,\right)
+\,\sigma\left(\,^tV\,
{}^{{}^{{}^{{}^\text{\scriptsize $t$}}}}\!\!\!
\left(Y\PZB\right)\PO\,\right)\nonumber
\end{eqnarray*}
and
\begin{equation*}
{\mathbb M}_2=\,\sigma\left(\, Y\,\PZ\,
{}^{{}^{{}^{{}^\text{\scriptsize $t$}}}}\!\!\!
\left(
\PZB\right)\,\right).
\end{equation*}
Furthermore ${\mathbb M}_1$ and ${\mathbb M}_2$ are differential operators on $\BH_{n,m}$ invariant under the action (1.2) of $G^J.$
\end{theorem}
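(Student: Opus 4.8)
\medskip

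The plan is to derive the Laplacian directly from the explicit form of the K\"ahler metric $ds_{n,m;A,B}^2$ given in Theorem 2.1, using the standard fact that for a K\"ahler metric $ds^2 = 2\sum g_{\alpha\bar\beta}\, dw_\alpha\, d\overline{w}_\beta$ in local holomorphic coordinates $(w_\alpha)$, the Laplace--Beltrami operator is $\Delta = 2\sum g^{\bar\beta\alpha}\, \partial_{w_\alpha}\partial_{\overline{w}_\beta}$, with no first-order terms. Here the holomorphic coordinates are the entries $\omega_{\mu\nu}$ ($\mu\le\nu$) of $\Omega$ and the entries $z_{kl}$ of $Z$. So the first step is to read off the Hermitian coefficient matrix $(g_{\alpha\bar\beta})$ of the metric: from the $A$-term one gets the block $A\,Y^{-1}\otimes Y^{-1}$ (suitably symmetrized in the $\Omega\bar\Omega$ slot), and from the $B$-terms one gets a $(\Omega,Z)\times(\overline\Omega,\overline Z)$ block matrix whose entries involve $Y^{-1}$, $V$, and ${}^tV V$. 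The key structural observation is that the $B$-part of the metric is exactly the pullback, under the map $(\Omega,Z)\mapsto \Omega$ together with the "shearing" $Z\mapsto Z - V Y^{-1}\cdot(\text{something})$, of a degenerate form; concretely the whole metric can be written in the block form
\[
\begin{pmatrix} A\,\mathcal Y^{-1} + B\, \mathcal Y^{-1}({}^tVV)\mathcal Y^{-1} & -B\,\mathcal Y^{-1}\,{}^tV \\ -B\, V\,\mathcal Y^{-1} & B\, \mathcal Y^{-1}\end{pmatrix},
\]
schematically, where $\mathcal Y^{-1}$ denotes the appropriate $Y^{-1}$-operator on each factor.

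\medskip

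The second step is to invert this block matrix. Because it has the Schur-complement shape $\begin{pmatrix} P + Q^tM Q R Q & -R Q^{t}\\ -RQ & R\end{pmatrix}$ with $R = B\,\mathcal Y^{-1}$ invertible, the inverse can be computed cleanly: the Schur complement of the bottom-right block is precisely $A\,\mathcal Y^{-1}$, so the inverse has bottom-right block $B^{-1}\mathcal Y + A^{-1}\,(V)\mathcal Y\,({}^tV)$-type terms, top-left block $A^{-1}\mathcal Y$, and off-diagonal blocks $A^{-1}\mathcal Y\,{}^tV$ and $A^{-1}V\,\mathcal Y$. Here $\mathcal Y$ denotes the operator inverse to $\mathcal Y^{-1}$ (so on the $\Omega$-factor it contributes the $\sigma(Y\,{}^t(Y\,\partial_{\overline\Omega})\,\partial_\Omega)$-type expression, and on the $Z$-factor the $\sigma(Y\,\partial_Z\,{}^t(\partial_{\overline Z}))$-expression). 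Plugging $g^{\bar\beta\alpha}$ back into $\Delta = 2\sum g^{\bar\beta\alpha}\partial_{w_\alpha}\partial_{\overline w_\beta}$ and collecting terms by their $A^{-1}$- and $B^{-1}$-coefficients yields exactly $\tfrac{4}{A}\mathbb M_1 + \tfrac{4}{B}\mathbb M_2$, with the four summands of $\mathbb M_1$ coming from the four blocks $A^{-1}\mathcal Y$, $A^{-1}V\mathcal Y\,{}^tV$, $A^{-1}V\mathcal Y$, $A^{-1}\mathcal Y\,{}^tV$, and $\mathbb M_2$ coming from the single $B^{-1}\mathcal Y$ term on the $Z$-factor.

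\medskip

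The last step is invariance: since $ds_{n,m;A,B}^2$ is $G^J$-invariant (Theorem 2.1), its Laplace--Beltrami operator is automatically $G^J$-invariant, which gives invariance of $\tfrac{4}{A}\mathbb M_1 + \tfrac{4}{B}\mathbb M_2$ for all $A,B>0$; letting $B\to\infty$ (or taking two distinct values of $B/A$ and subtracting) then forces $\mathbb M_1$ and $\mathbb M_2$ to be individually $G^J$-invariant. I expect the main obstacle to be purely computational bookkeeping: one must be careful with the $\tfrac{1+\delta_{ij}}{2}$ normalization built into $\partial_\Omega,\partial_{\overline\Omega}$ (which is exactly what makes the symmetric-matrix variables behave like independent coordinates and is responsible for the factor $4$ rather than $2$), and with keeping the non-commuting matrix factors $Y$, $V$, $\partial_\Omega$, $\partial_Z$ in the correct order inside the traces when transposing blocks of the inverse metric. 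A cleaner alternative to the brute-force inversion, which I would try first, is to exploit the "partial bounded realization" philosophy: the substitution $\widetilde Z = Z$, together with recognizing that $\sigma(Y^{-1}\,^t(dZ - V Y^{-1}d\Omega)(\overline{dZ - VY^{-1}d\Omega}))$ is the natural $B$-term, diagonalizes the metric into an $A$-part on $\BH_n$ (pulled back) plus a fibrewise flat-type $B$-part, reducing the computation to the already-known Maass formula (2.2) plus an elementary flat Laplacian on the $\BC^{(m,n)}$-fibre.
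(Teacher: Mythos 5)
The paper contains no proof of this theorem beyond the citation of Theorem 1.2 in \cite{YJH9}, and the computation there proceeds essentially along the route you describe: read off the Hermitian coefficient matrix of the K\"ahler metric in the holomorphic coordinates $(\omega_{\mu\nu},z_{kl})$, invert it, and use that the Laplace--Beltrami operator of a K\"ahler metric is $2\sum g^{{\bar\beta}\alpha}\partial_\alpha\partial_{\bar\beta}$ with no first-order terms. Your two key points are correct and are exactly what make the inversion tractable: the $B$-part of the metric equals $B\,\sigma\big(Y^{-1}\,{}^t\xi\,\overline{\xi}\,\big)$ with $\xi=dZ-VY^{-1}d\Omega$, so the coefficient matrix is a unipotent congruence of the block-diagonal form built from the Siegel term and the fibre term and inverts by Schur complement --- yielding the four summands of $\mathbb M_1$ with coefficient $4/A$ and the single summand of $\mathbb M_2$ with coefficient $4/B$, the factor $4$ arising from the same normalization that gives Maass's $\Delta_{n;1}$ --- and the invariance of $\tfrac4A\mathbb M_1+\tfrac4B\mathbb M_2$ for two distinct ratios $B/A$ does force the separate invariance of $\mathbb M_1$ and $\mathbb M_2$.
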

\vskip 1mm\noindent
{\it Proof.} See Theorem 1.2 in \cite{YJH9}. \hfill $\Box$

\vskip 3mm
\begin{remark}
Erik Balslev \cite{B} developed the spectral theory of $\Delta_{1,1;1,1}$ on $\Bbb H_{1,1}$ for certain arithmetic subgroups of the Jacobi modular group to
prove that the set of all eigenvalues of $\Delta_{1,1;1,1}$ satisfies the Weyl law.
\end{remark}

\begin{remark}
The sectional curvature of $(\BH_{1,1}, ds^2_{1,1;A,B})$ is $-{3\over A}$ and hence is independent of the parameter $B$. We refer to \cite{YJH14}
for more detail.
\end{remark}

\begin{remark}
For an application of the invariant metric $ds^2_{n,m;A,B}$ we refer to \cite{YY}.
\end{remark}

\end{section}

\vskip 1cm

\begin{section}{{\bf Invariant Differential Operators on the Siegel-Jacobi Space }}
\setcounter{equation}{0}
\vskip 0.21cm
Before we discuss $G^J$-invariant differential operators on the Siegel-Jacobi space $\BH_{n,m}$, we review differential operators on the Siegel upper half plane $\BH_n$
invariant under the action (1.1).

\vskip 0.21cm
\newcommand\POB{ {{\partial\ }\over {\partial{\overline \Omega}}} }
\newcommand\PZB{ {{\partial\ }\over {\partial{\overline Z}}} }
\newcommand\PX{ {{\partial\ }\over{\partial X}} }
\newcommand\PY{ {{\partial\ }\over {\partial Y}} }
\newcommand\PU{ {{\partial\ }\over{\partial U}} }
\newcommand\PV{ {{\partial\ }\over{\partial V}} }
\newcommand\PO{ {{\partial\ }\over{\partial \Omega}} }
\newcommand\PZ{ {{\partial\ }\over{\partial Z}} }

\newcommand\POBS{ {{\partial\ \,}\over {\partial{\overline \Omega}_*} } }
\newcommand\PZBS{ {{\partial\ \,}\over {\partial{\overline Z_*}}} }
\newcommand\PXS{ {{\partial\ \,}\over{\partial X_*}} }
\newcommand\PYS{ {{\partial\ \,}\over {\partial Y_*}} }
\newcommand\PUS{ {{\partial\ \,}\over{\partial U_*}} }
\newcommand\PVS{ {{\partial\ \,}\over{\partial V_*}} }
\newcommand\POS{ {{\partial\ \,}\over{\partial \Omega_*}} }
\newcommand\PZS{ {{\partial\ \,}\over{\partial Z_*}} }

For brevity, we write $G=Sp(n,\BR).$ The isotropy
subgroup $K$ at $iI_n$ for the action (1.1) is a maximal compact
subgroup given by
\begin{equation*}
K=\left\{ \begin{pmatrix} A & -B \\ B & A \end{pmatrix} \Big| \
A\,^t\!A+ B\,^t\!B=I_n,\ A\,^t\!B=B\,^t\!A,\ A,B\in
\BR^{(n,n)}\,\right\}.
\end{equation*}

\noindent Let $\fk$ be the Lie algebra of $K$. Then the Lie
algebra $\fg$ of $G$ has a Cartan decomposition $\fg=\fk\oplus
\fp$, where
\begin{equation*}
\frak g=\left\{ \begin{pmatrix} X_1 & \ \ X_2 \\ X_3 & -\,{}^tX_1
\end{pmatrix}\,\Big|\ X_1,X_2,X_3\in\BR^{(n,n)},\ X_2=\,{}^tX_2,\ X_3=\,{}^tX_3\,\right\},
\end{equation*}

\begin{equation*}
\frak k=\left\{ \begin{pmatrix} X &  -Y \\ Y & \ X
\end{pmatrix}\in\BR^{(2n,2n)}\,\Big|
\ {}^tX+X=0,\ Y=\,{}^tY\,\right\},
\end{equation*}

\begin{equation*}
\fp=\left\{ \begin{pmatrix} X & \ Y \\ Y & -X \end{pmatrix} \Big| \
X=\,^tX,\ Y=\,^tY,\ X,Y\in \BR^{(n,n)}\,\right\}.
\end{equation*}

The subspace $\fp$ of $\fg$ may be regarded as the tangent space
of $\BH_n$ at $iI_n.$ The adjoint representation of $G$ on $\fg$
induces the action of $K$ on $\fp$ given by
\begin{equation}
k\cdot Z=\,kZ\,^tk,\quad k\in K,\ Z\in \fp.
\end{equation}

Let $T_n$ be the vector space of $n\times n$ symmetric complex
matrices. We let $\Psi: \fp\lrt T_n$ be the map defined by
\begin{equation}
\Psi\left( \begin{pmatrix} X & \ Y \\ Y & -X \end{pmatrix}
\right)=\,X\,+\,i\,Y, \quad \begin{pmatrix} X & \ Y \\ Y & -X
\end{pmatrix}\in \fp.
\end{equation}

\noindent We let $\delta:K\lrt U(n)$ be the isomorphism defined by
\begin{equation}
\delta\left( \begin{pmatrix} A & -B \\ B & A \end{pmatrix}
\right)=\,A\,+\,i\,B, \quad \begin{pmatrix} A & -B \\ B & A
\end{pmatrix}\in K,
\end{equation}

\noindent where $U(n)$ denotes the unitary group of degree $n$. We
identify $\fp$ (resp. $K$) with $T_n$ (resp. $U(n)$) through the
map $\Psi$ (resp. $\delta$). We consider the action of $U(n)$ on
$T_n$ defined by
\begin{equation}
h\cdot \omega=\,h\omega \,^th,\quad h\in U(n),\ \omega\in T_n.
\end{equation}

\noindent Then the adjoint action (3.1) of $K$ on $\fp$ is
compatible with the action (3.4) of $U(n)$ on $T_n$ through the
map $\Psi.$ Precisely for any $k\in K$ and $Z\in \fp$, we get
\begin{equation}
\Psi(k\,Z \,^tk)=\delta(k)\,\Psi(Z)\,^t\delta (k).
\end{equation}

\noindent The action (3.4) induces the action of $U(n)$ on the
polynomial algebra $ \textrm{Pol}(T_n)$ and the symmetric algebra
$S(T_n)$ respectively. We denote by $ \textrm{Pol}(T_n)^{U(n)}$
$\Big( \textrm{resp.}\ S(T_n)^{U(n)}\,\Big)$ the subalgebra of $
\textrm{Pol}(T_n)$ $\Big( \textrm{resp.}\ S(T_n)\,\Big)$
consisting of $U(n)$-invariants. The following inner product $(\
,\ )$ on $T_n$ defined by $$(Z,W)= \, \textrm{tr}
\big(Z\,{\overline W}\,\big),\quad Z,W\in T_n$$

\noindent gives an isomorphism as vector spaces
\begin{equation}
T_n\cong T_n^*,\quad Z\mapsto f_Z,\quad Z\in T_n,
\end{equation}

\noindent where $T_n^*$ denotes the dual space of $T_n$ and $f_Z$
is the linear functional on $T_n$ defined by
$$f_Z(W)=(W,Z),\quad W\in T_n.$$

\noindent It is known that there is a canonical linear bijection
of $S(T_n)^{U(n)}$ onto the algebra ${\mathbb D}(\BH_n)$ of
differential operators on $\BH_n$ invariant under the action (1.1)
of $G$. Identifying $T_n$ with $T_n^*$ by the above isomorphism
(3.6), we get a canonical linear bijection
\begin{equation}
\Theta_n:\textrm{Pol}(T_n)^{U(n)} \lrt {\mathbb D}(\BH_n)
\end{equation}

\noindent of $ \textrm{Pol}(T_n)^{U(n)}$ onto ${\mathbb
D}(\BH_n)$. The map $\Theta_n$ is described explicitly as follows.
Similarly the action (3.1) induces the action of $K$ on the
polynomial algebra $ \textrm{Pol}(\fp)$ and the symmetric algebra $S(\fp)$ respectively.
Through the map $\Psi$, the subalgebra $ \textrm{Pol}(\fp)^K$ of $
\textrm{Pol}(\fp)$ consisting of $K$-invariants is isomorphic to $
\textrm{Pol}(T_n)^{U(n)}$. We put $N=n(n+1)$. Let $\left\{
\xi_{\alpha}\,|\ 1\leq \alpha \leq N\, \right\}$ be a basis of a real vector space
$\fp$. If $P\in \textrm{Pol}(\fp)^K$, then
\begin{equation}
\Big(\Theta_n (P)f\Big)(gK)=\left[ P\left( {{\partial\ }\over {\partial
t_{\al}}}\right)f\left(g\,\text{exp}\, \left(\sum_{\al=1}^N
t_{\al}\xi_{\al}\right) K\right)\right]_{(t_{\al})=0},
\end{equation} where $f\in C^{\infty}({\mathbb H}_{n})$. We refer to \cite{He1,He2} for more detail. In
general, it is hard to express $\Phi(P)$ explicitly for a
polynomial $P\in \textrm{Pol}(\fp)^K$.

\vskip 0.3cm According to the work of Harish-Chandra \cite{HC1,HC2}, the
algebra ${\mathbb D}(\BH_n)$ is generated by $n$ algebraically
independent generators and is isomorphic to the commutative algebra
$\BC [x_1,\cdots,x_n]$ with $n$ indeterminates. We note that $n$
is the real rank of $G$. Let $\fg_{\BC}$ be the complexification
of $\fg$. It is known that $\BD(\BH_n)$ is isomorphic to the
center of the universal enveloping algebra of $\fg_{\BC}$.

\vskip 0.3cm Using a classical invariant theory (cf.\,\cite{Ho, W},
we can show that $\textrm{Pol}(T_n)^{U(n)}$ is generated by the
following algebraically independent polynomials
\begin{equation}
q_j (\omega)=\,\textrm{tr}\Big( \big(\omega {\overline
\omega}\big)^j\,\Big),\quad \omega\in T_n, \quad j=1,2,\cdots,n.
\end{equation}

For each $j$ with $1\leq j\leq n,$ the image $\Theta_n(q_j)$ of $q_j$
is an invariant differential operator on $\BH_n$ of degree $2j$.
The algebra ${\mathbb D}(\BH_n)$ is generated by $n$ algebraically
independent generators $\Theta_n(q_1),\Theta_n(q_2),\cdots,\Theta_n(q_n).$ In
particular,
\begin{equation}
\Theta_n(q_1)=\,c_1\, \textrm{tr}\! \left( Y\,
{}^{{}^{{}^{{}^\text{\scriptsize $t$}}}}\!\!\!
\left(Y\POB\right)\!\PO\right)\quad  \textrm{for\ some
constant}\ c_1.
\end{equation}

\noindent We observe that if we take $\omega=x+\,i\,y\in T_n$ with real $x,y$,
then $q_1(\omega)=q_1(x,y)=\,\textrm{tr}\big( x^2 +y^2\big)$ and
\begin{equation*}
q_2(\omega)=q_2(x,y)=\, \textrm{tr}\Big(
\big(x^2+y^2\big)^2+\,2\,x\big(xy-yx)y\,\Big).
\end{equation*}

\vskip 0.3cm It is a natural question to express the images
$\Theta_n(q_j)$ explicitly for $j=2,3,\cdots,n.$ We hope that the images $\Theta_n(q_j)$ for
$j=2,3,\cdots,n$ are expressed in the form of the $\textit{trace}$
as $\Phi(q_1)$.

\vskip 0.3cm H. Maass \cite{M2} found algebraically independent generators $H_1,H_2,\cdots,H_n$ of ${\mathbb D}(\BH_n)$.
We will describe $H_1,H_2,\cdots,H_n$ explicitly. For $M=\begin{pmatrix} A&B\\
C&D\end{pmatrix} \in Sp(n,\BR)$ and $\Omega=X+iY\in \BH_n$ with real $X,Y$, we set
\begin{equation*}
\Omega_*=\,M\!\cdot\!\Omega=\,X_*+\,iY_*\quad \textrm{with}\ X_*,Y_*\ \textrm{real}.
\end{equation*}
We set
\begin{eqnarray*}
K&=&\,\big( \Omega-{\overline\Omega}\,\big)\PO=\,2\,i\,Y \PO,\\
\Lambda&=&\,\big( \Omega-{\overline\Omega}\,\big)\POB=\,2\,i\,Y \POB,\\
K_*&=& \,\big( \Omega_*-{\overline\Omega}_*\,\big)\POS=\,2\,i\,Y_* \POS,\\
\Lambda_*&=&\,\big( \Omega_*-{\overline\Omega}_*\,\big)\POBS=\,2\,i\,Y_* \POBS.
\end{eqnarray*}
Then it is easily seen that
\begin{equation}
K_*=\,{}^t(C{\overline\Om}+D)^{-1}\,{}^t\!\left\{ (C\Omega+D)\,{}^t\!K \right\},
\end{equation}

\begin{equation}
\Lambda_*=\,{}^t(C{\Om}+D)^{-1}\,{}^t\!\left\{ (C{\overline\Omega}+D)\,{}^t\!\Lambda \right\}
\end{equation}
and
\begin{equation}
{}^t\!\left\{ (C{\overline\Omega}+D)\,{}^t\!\Lambda \right\}=\,\Lambda\,{}^t(C{\overline\Omega}+D)
-{{n+1}\over 2} \,\big( \Omega-{\overline\Omega}\,\big)\,{}^t\!C.
\end{equation}

Using Formulas (3.11),\,(3.12) and (3.13), we can show that

\begin{equation}
\Lambda_*K_* \,+\,{{n+1}\over 2}K_*=\,{}^t(C{\Om}+D)^{-1}\,{}^{{}^{{}^{{}^\text{\scriptsize $t$}}}}\!\!\!
\left\{ (C{\Omega}+D)\,{}^{{}^{{}^{{}^\text{\scriptsize $t$}}}}\!\!\!
\left( \Lambda K \,+\, {{n+1}\over 2}K \right)\right\}.
\end{equation}

\noindent Therefore we get
\begin{equation}
\textrm{tr}\!  \left( \Lambda_*K_* \,+\,{{n+1}\over 2}K_* \right) =\,
 \textrm{tr}\!   \left( \Lambda K \,+\, {{n+1}\over 2}K \right).
\end{equation}
We set
\begin{equation}
A^{(1)}=\,\Lambda K \,+\, {{n+1}\over 2}K .
\end{equation}

We define $A^{(j)}\,(j=2,3,\cdots,n)$ recursively by
\begin{eqnarray}
A^{(j)}&=&\, A^{(1)}A^{(j-1)}- {{n+1}\over 2}\,\Lambda\, A^{(j-1)}\,+\,{\frac 12}\,\Lambda\,\textrm{tr}\!\left(
A^{(j-1)} \right)\\
& & \ \ \,+\, {\frac 12}\,\big( \Omega-{\overline\Omega}\,\big) \,
{}^{{}^{{}^\text{\scriptsize $t$}}}\!\!\!\left\{
\big( \Omega-{\overline\Omega}\,\big)^{-1}
\,{}^t\!\left( \,{}^t\!\Lambda\,{}^t\!A^{(j-1)}\right)\right\}.\nonumber
\end{eqnarray}

\noindent We set
\begin{equation}
H_j=\,\textrm{tr}\!  \left( A^{(j)} \right),\quad j=1,2,\cdots,n.
\end{equation}
As mentioned before, Maass proved that $H_1,H_2,\cdots,H_n$ are algebraically independent generators
of ${\mathbb D}(\BH_n)$.

\vskip 0.3cm In fact, we see that
\begin{equation}
 -H_1 =\Delta_{n;1}=\,4\, \textrm{tr}\! \left( Y\,
{}^{{}^{{}^{{}^\text{\scriptsize $t$}}}}\!\!\!
\left(Y\POB\right)\!\PO\right).
\end{equation}
is the Laplacian for the invariant metric $ds^2_{n;1}$ on $\BH_n$.


\vskip 0.53cm\noindent
$ \textbf{Example 3.1.}$ We consider the
case when $n=1.$ The algebra $ \textrm{Pol}(T_1)^{U(1)}$ is generated
by the polynomial
\begin{equation*}
q(\omega)=\omega\,{\overline \omega},\quad \omega=x+ \,iy\in \BC \ \textrm{with}\ x,y \ \textrm{real}.
\end{equation*}

Using Formula (3.8), we get

\begin{equation*}
\Theta_1 (q)=\,4\,y^2 \left( { {\partial^2}\over {\partial x^2} }+{
{\partial^2}\over {\partial y^2} }\,\right).
\end{equation*}

\noindent Therefore $\BD (\BH_1)=\BC\big[ \Theta_1(q)\big]=\,\BC [H_1].$

\vskip 0.3cm\noindent
$\textbf{Example 3.2.}$ We consider the
case when $n=2.$ The algebra $ \textrm{Pol}(T_2)^{U(2)}$ is generated
by the polynomial
\begin{equation*}
q_1(\omega)=\,\sigma \big(\omega\,{\overline \omega}\,\big),\quad q_2(\omega)=\,\sigma
\Big( \big(\omega\,{\overline \omega}\big)^2\Big), \quad \omega\in T_2.
\end{equation*}

Using Formula (3.8), we may express $\Theta_2(q_1)$ and $\Theta_2(q_2)$
explicitly. $\Theta_2 (q_1)$ is expressed by Formula (3.10). The
computation of $\Theta_2(q_2)$ might be quite tedious. We leave the
detail to the reader. In this case, $\Theta_2 (q_2)$ was essentially
computed in \cite{BC}, Proposition 6. Therefore
\begin{equation*}
\BD (\BH_2)=\BC\big[
\Theta_2(q_1), \Theta_2(q_2)\big]=\,\BC [H_1,H_2].
\end{equation*}
In fact, the center of the universal enveloping algebra
${\mathscr U}(\fg_{\BC})$ was computed in \cite{BC}.

\vskip 0.3cm G. Shimura \cite{Sh3} found canonically defined algebraically independent generators
of $\BD(\BH_n)$. We will describe his way of constructing those generators roughly. Let $K_\BC,\,
{\frak g}_\BC,\,{\mathfrak k}_\BC,{\mathfrak p}_\BC,\cdots$ denote the complexication of $K,\,{\mathfrak g},\,
{\mathfrak k},\,{\mathfrak p},\cdots$ respectively. Then we have the Cartan decomposition
\begin{equation*}
{\frak g}_\BC=\,{\mathfrak k}_\BC + {\mathfrak p}_\BC,\quad
{\mathfrak p}_\BC=\,{\mathfrak p}_\BC^+ + {\mathfrak p}_\BC^-
\end{equation*}
with the properties
\begin{equation*}
[{\mathfrak k}_\BC,{\mathfrak p}_\BC^{\pm}]\subset {\mathfrak p}_\BC^{\pm},\ \ \
[{\mathfrak p}_\BC^{+},{\mathfrak p}_\BC^+]=[{\mathfrak p}_\BC^-,{\mathfrak p}_\BC^-]=\{0\},
\ \ \ [{\mathfrak p}_\BC^+,{\mathfrak p}_\BC^-]=\,{\mathfrak k}_\BC,
\end{equation*}
where
\begin{equation*}
{\frak g}_\BC=\left\{ \begin{pmatrix} X_1 & \ \ X_2 \\ X_3 & -\,{}^tX_1
\end{pmatrix}\,\Big|\ X_1,X_2,X_3\in\BC^{(n,n)},\ X_2=\,{}^tX_2,\ X_3=\,{}^tX_3\,\right\},
\end{equation*}
\begin{equation*}
{\frak k}_\BC=\left\{ \begin{pmatrix} A &  -B \\ B & \ A
\end{pmatrix}\in\BC^{(2n,2n)}\,\Big|
\ {}^tA+A=0,\ B=\,{}^tB\,\right\},
\end{equation*}
\begin{eqnarray*}
{\mathfrak p}_\BC&=&\,\left\{ \begin{pmatrix} X & \ Y \\ Y & -X
\end{pmatrix}\in\BC^{(2n,2n)}\,\Big|
\ X=\,{}^tX,\ Y=\,{}^tY\,\right\},\\
{\mathfrak p}_\BC^+&=&\,\left\{ \begin{pmatrix} Z & iZ \\ iZ & -Z
\end{pmatrix}\in\BC^{(2n,2n)}\,\Big|
\ Z=\,{}^tZ\in \BC^{(n,n)}\,\right\},\\
{\mathfrak p}_\BC^-&=&\,\left\{ \begin{pmatrix} \ Z & -iZ \\ -iZ & \,-Z
\end{pmatrix}\in\BC^{(2n,2n)}\,\Big|
\ Z=\,{}^tZ\in \BC^{(n,n)}\,\right\}.
\end{eqnarray*}

For a complex vector space $W$ and a nonnegative integer $r$, we denote by $ \textrm{Pol}_r(W)$
the vector space of complex-valued homogeneous polynomial functions on $W$ of degree $r$. We put
$$\textrm{Pol}^r(W):=\sum_{s=0}^r \textrm{Pol}_s(W).$$
$ \textrm{Ml}_r(W)$ denotes the vector space of all $\BC$-multilinear maps of $W\times \cdots
\times W\,( r \ \textrm{copies})$ into $\BC$.
An element $Q$ of $ \textrm{Ml}_r(W)$ is called {\it symmetric} if
$$Q(x_1,\cdots,x_r)=\,Q(x_{\pi(1)},\cdots,x_{\pi(r)})$$ for each permutation $\pi$ of
$\{ 1,2,\cdots,r\}.$ Given $P\in \textrm{Pol}_r(W)$, there is a unique element symmetric element
$P_*$ of $ \textrm{Ml}_r(W)$ such that
\begin{equation}
P(x)=\,P_*(x,\cdots,x)\qquad \textrm{for all}\ x\in W.
\end{equation}
Moreover the map $P\mapsto P_*$ is a $\BC$-linear bijection of $\textrm{Pol}_r(W)$ onto the
set of all symmetric elements of $ \textrm{Ml}_r(W)$. We let $S_r(W)$ denote the subspace
consisting of all homogeneous elements of degree $r$ in the symmetric algebra $S(W)$. We note that
$ \textrm{Pol}_r(W)$ and $S_r(W)$ are dual to each other with respect to the pairing
\begin{equation}
\langle \alpha,x_1\cdots x_r \rangle=\,\alpha_*(x_1,\cdots,x_r)\qquad (x_i\in W,\ \alpha
\in \textrm{Pol}_r(W)).
\end{equation}

\vskip 0.2cm
Let ${\mathfrak p}_\BC^*$ be the dual space of ${\mathfrak p}_\BC$, that is, ${\mathfrak p}_\BC^*=
\textrm{Pol}_1({\mathfrak p}_\BC).$ Let $\{ X_1,\cdots,X_N\}$ be a basis of ${\mathfrak p}_\BC$ and
$\{ Y_1,\cdots,Y_N\}$ be the basis of ${\mathfrak p}_\BC^*$ dual to $\{ X_\nu\},$ where
$N=n(n+1)$. We note that $\textrm{Pol}_r({\mathfrak p}_\BC)$ and
$\textrm{Pol}_r({\mathfrak p}_\BC^*)$ are dual to each other with respect to the pairing
\begin{equation}
\langle\alpha,\beta\rangle=\sum \alpha_*(X_{i_1},\cdots,X_{i_r})\,\beta_* (Y_{i_1},\cdots,Y_{i_r}),\
\end{equation}
where $\alpha\in \textrm{Pol}_r({\mathfrak p}_\BC),\ \beta\in \textrm{Pol}_r({\mathfrak p}_\BC^*)$ and
$(i_1,\cdots,i_r)$ runs over $\{1,\cdots,N\}^r.$ Let ${\mathscr U}(\fg_\BC)$ be the universal enveloping
algebra of $\fg_\BC$ and ${\mathscr U}^p(\fg_\BC)$ its subspace spanned by the elements of the form
$V_1\cdots V_s$ with $V_i\in\fg_\BC$ and $s\leq p.$  We recall that there is a $\BC$-linear bijection
$\psi$ of the symmetric algebra $S(\fg_\BC)$ of $\fg_\BC$ onto ${\mathscr U}(\fg_\BC)$ which is
characterized by the property that $\psi(X^r)=X^r$ for all $X\in\fg_\BC.$ For each $\alpha\in
\textrm{Pol}_r({\mathfrak p}_\BC^*)$ we define an element $\omega(\alpha)$ of
${\mathscr U}(\fg_\BC)$ by
\begin{equation}
\omega(\alpha):=\sum \alpha_*(Y_{i_1},\cdots,Y_{i_r})\,X_{i_1}\cdots X_{i_r},
\end{equation}
where $(i_1,\cdots,i_r)$ runs over $\{1,\cdots,N\}^r.$ If $Y\in {\mathfrak p}_\BC$, then $Y^r$ as
an element of $\textrm{Pol}_r({\mathfrak p}_\BC^*)$ is defined by
$$ Y^r(u)=Y(u)^r \qquad   \textrm{for all} \ u\in {\mathfrak p}_\BC^*.$$
Hence $(Y^r)_*(u_1,\cdots,u_r)=\,Y(u_1)\cdots Y(u_r).$ According to (2.25), we see that if
$\alpha (\sum t_iY_i)=\,P(t_1,\cdots,t_N)$ for $t_i\in\BC$ with a polynomial $P$, then
\begin{equation}
\omega(\alpha)=\,\psi(P(X_1,\cdots,X_N)).
\end{equation}
Thus $\omega$ is a $\BC$-linear injection of $\textrm{Pol}({\mathfrak p}_\BC^*)$ into
${\mathscr U}(\fg_\BC)$ independent of the choice of a basis. We observe that
$\omega\big( \textrm{Pol}_r({\mathfrak p}_\BC^*)\big)=\,\psi(S_r({\mathfrak p}_\BC)).$ It is a well-known
fact that if $\alpha_1,\cdots,\alpha_m\in \textrm{Pol}_r({\mathfrak p}_\BC^*)$, then
\begin{equation}
\omega (\alpha_1\cdots\alpha_m)-\omega(\alpha_m)\cdots\omega(\alpha_1)\in {\mathscr U}^{r-1}(\fg_\BC).
\end{equation}

\vskip 0.2cm
We have a canonical pairing
\begin{equation}
\langle\, \ ,\ \,\rangle: \textrm{Pol}_r({\mathfrak p}_\BC^+) \times \textrm{Pol}_r({\mathfrak p}_\BC^-)
\lrt \BC
\end{equation}
defined by
\begin{equation}
\langle f ,g\rangle=\sum f_*({\widetilde X}_{i_1},\cdots,{\widetilde X}_{i_r}) g_*({\widetilde Y}_{i_1},\cdots,{\widetilde Y}_{i_r}),
\end{equation}
where $f_*$ (resp. $g_*$) are the unique symmetric elements of $ \textrm{Ml}_r({\mathfrak p}_\BC^+)$ (resp.\
$ \textrm{Ml}_r({\mathfrak p}_\BC^-))$, and
$\{ {\widetilde X}_1,\cdots,{\widetilde X}_{\widetilde N}\}$ and $\{ {\widetilde Y}_1,\cdots, {\widetilde Y}_{\widetilde N}\}$
are dual bases of ${\mathfrak p}_\BC^+$
and ${\mathfrak p}_\BC^-$ with respect to the Killing form $B(X,Y)=\,2(n+1)\,\textrm{tr}(XY)$, ${\widetilde N}= {{n(n+1)}\over 2},$
and $(i_1,\cdots,i_r)$ runs over $\big\{1,\cdots,{\widetilde N}\big\}^r.$

\vskip 0.3cm The adjoint representation of $K_\BC$ on ${\mathfrak p}_\BC^{\pm}$ induces the representation
of $K_\BC$ on $ \textrm{Pol}_r({\mathfrak p}_\BC^{\pm})$. Given a $K_\BC$-irreducible subspace
$Z$ of $\textrm{Pol}_r({\mathfrak p}_\BC^+),$ we can find a unique
$K_\BC$-irreducible subspace
$W$ of $\textrm{Pol}_r({\mathfrak p}_\BC^-)$ such that $\textrm{Pol}_r({\mathfrak p}_\BC^-)$ is the
direct sum of $W$ and the annihilator of $Z$. Then $Z$ and $W$ are dual with respect to the pairing (3.26).
Take bases $\{\zeta_1,\cdots,\zeta_\kappa\}$ of $Z$ and $\{ \xi_1,\cdots,\xi_\kappa\}$ of $W$ that
are dual to each other. We set
\begin{equation}
f_Z(x,y) =\,\sum_{\nu=1}^\kappa \zeta_\nu(x)\,\xi_\nu(y)\qquad (x\in {\mathfrak p}_\BC^+,
\ y\in {\mathfrak p}_\BC^-).
\end{equation}
It is easily seen that $f_Z$ belongs to $\textrm{Pol}_{2r}({\mathfrak p}_\BC)^K$ and is independent of the choice of
dual bases $\{ \zeta_\nu\}$ and $\{ \xi_\nu\}.$ Shimura \cite{Sh3} proved that there exists a
canonically defined set
$\{ Z_1,\cdots,Z_n\}$ with a $K_\BC$-irreducible subspace $Z_r$ of $\textrm{Pol}_r({\mathfrak p}_\BC^+)\
(1\leq r \leq n)$ such that $f_{Z_1},\cdots,f_{Z_n}$ are algebraically independent generators of
$\textrm{Pol}({\mathfrak p}_\BC)^K$. We can identify ${\mathfrak p}_\BC^+$ with $T_n$.
We recall that $T_n$ denotes the vector space of $n\times n$ symmetric complex matrices.
We can take $Z_r$ as the subspace
of $\textrm{Pol}_r(T_n)$ spanned by the functions $f_{a;r}(Z)=\det_r(\,{}^taZa)$ for all $a\in GL(n,\BC),$
where $\det_r(x)$ denotes the determinant of the upper left $r\times r$ submatrix of $x$. For every $f\in
\textrm{Pol}({\mathfrak p}_\BC)^K$, we let $\Omega(f)$ denote the element of $\BD (\BH_n)$ represented by
$\omega (f)$. Then $\BD (\BH_n)$ is the polynomial ring $\BC[\omega (f_{Z_1}),\cdots, \omega (f_{Z_n})]$
generated by $n$ algebraically independent elements $\omega (f_{Z_1}),\cdots, \omega (f_{Z_n}).$

\vskip 0.21cm
\newcommand\PE{ {{\partial}\over {\partial \eta}} }
\newcommand\PEB{ {{\partial}\over {\partial{\overline \eta}}} }

\newcommand\pdx{ {{\partial}\over{\partial x}} }
\newcommand\pdy{ {{\partial}\over{\partial y}} }
\newcommand\pdu{ {{\partial}\over{\partial u}} }
\newcommand\pdv{ {{\partial}\over{\partial v}} }
\newcommand\PW{ {{\partial}\over {\partial W}} }
\newcommand\PWB{ {{\partial}\over {\partial{\overline W}}} }
\renewcommand\th{\theta}
\renewcommand\l{\lambda}
\renewcommand\k{\kappa}

\vskip 0.3cm
Now we investigate differential operators on the Siegel-Jacobi space $\BH_{n,m}$ invariant under the action (1.2) of $G^J$.
The stabilizer $K^J$ of $G^J$ at $(iI_n,0)$ is given by
\begin{equation*}
K^J=\Big\{ \big(k,(0,0;\ka)\big)\,\big|\ k\in K,\ \ka=\,^t\ka\in
\BR^{(m,m)}\,\Big\}.
\end{equation*}

\noindent Therefore $\Hnm\cong G^J/K^J$ is a homogeneous space which is not symmetric. The Lie algebra $\fg^J$ of $G^J$ has a decomposition
\begin{equation*}
\fg^J=\fk^J+\fp^J,
\end{equation*}

\noindent where
\begin{equation*}
\fg^J=\Big\{ \big(Z,(P,Q,R)\big)\,\big|\ Z\in \fg,\ P,Q\in\BR^{(m,n)},\
R=\,^t\!R\in \BR^{(m,m)}\,\Big\},
\end{equation*}

\begin{equation*}
\fk^J=\Big\{ \big(X,(0,0,R)\big)\,\big|\ X\in \fk,\
R=\,^t\!R\in \BR^{(m,m)}\,\Big\},
\end{equation*}

\noindent
\begin{equation*}
\fp^J=\Big\{ \big(Y,(P,Q,0)\big)\,\big|\ Y\in \fp,\ P,Q\in
\BR^{(m,n)}\,\Big\}.
\end{equation*}

\noindent Thus the tangent space of the homogeneous space $\Hnm$
at $(iI_n,0)$ is identified with $\fp^J$.

\vskip 0.2cm
If $\alpha=\left( \begin{pmatrix} X_1 & \ Y_1 \\ Z_1 & -{}^t\!X_1 \end{pmatrix},
(P_1,Q_1,R_1)\right)$ and
$\beta=\left( \begin{pmatrix} X_2 & \ Y_2 \\ Z_2 & -{}^t\!X_2 \end{pmatrix},
(P_2,Q_2,R_2)\right)$ are elements of $\fg^J$, then the Lie bracket
$[\alpha,\beta]$ of $\alpha$ and $\beta$ is given by
\begin{equation}
[\alpha,\beta]=\left( \begin{pmatrix} X^* & \ Y^* \\ Z^* & -{}^t\!X^* \end{pmatrix},
(P^*,Q^*,R^*)\right),
\end{equation}
where
\begin{eqnarray*}
X^*&=& X_1X_2 - X_2X_1 + Y_1Z_2 - Y_2Z_1,\\
Y^*&=& X_1Y_2 - X_2Y_1 + Y_2\,^t\!X_1 - Y_1\,^t\!X_2,\\
Z^*&=& Z_1X_2 - Z_2X_1 + \,^t\!X_2 Z_1 - \,^t\!X_1 Z_2,\\
P^*&=& P_1X_2 - P_2X_1 + Q_1Z_2 - Q_2Z_1,\\
Q^*&=& P_1Y_2 - P_2Y_1 + Q_2\,^t\!X_1 - Q_1\,^t\!X_2,\\
R^*&=& P_1\,^t\!Q_2 - P_2\,^t\!Q_1 + Q_2\,^t\!P_1 - Q_1\,^t\!P_2
\end{eqnarray*}

\begin{lemma} \begin{equation*}
[\fk^J,\fk^J]\subset \fk^J,\quad [\fk^J,\fp^J]\subset \fp^J.
\end{equation*}
\end{lemma}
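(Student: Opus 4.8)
The plan is to read off both inclusions directly from the explicit formula (3.29) for the bracket $[\alpha,\beta]$ in $\fg^J$. First I would record the translation between the matrix descriptions of $\fk,\fp$ given earlier in the section and the block notation $\begin{pmatrix} X_i & Y_i \\ Z_i & -{}^t\!X_i\end{pmatrix}$ of (3.29): comparing $\begin{pmatrix} X & -Y \\ Y & X\end{pmatrix}$ (the general element of $\fk$, with ${}^t\!X=-X$, $Y={}^t\!Y$) with that notation shows that the $\fg$-part of an element of $\fk$ has $X_i$ skew-symmetric, $Y_i$ symmetric, and $Z_i=-Y_i$; comparing $\begin{pmatrix} X & Y \\ Y & -X\end{pmatrix}$ (the general element of $\fp$, with $X={}^t\!X$, $Y={}^t\!Y$) shows that the $\fg$-part of an element of $\fp$ has $X_i$ symmetric and $Z_i=Y_i$ symmetric. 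Moreover an element of $\fk^J$ has Heisenberg part $P_i=Q_i=0$ (with $R_i$ symmetric), while an element of $\fp^J$ has $R_i=0$ (with $P_i,Q_i$ arbitrary). A key observation is that $X^*,Y^*,Z^*$ in (3.29) do not involve $P_i,Q_i,R_i$ at all, so the $\fg$-part of a bracket in $\fg^J$ is simply the $\fg$-bracket of the $\fg$-parts; consequently $[\fk,\fk]\subset\fk$ and $[\fk,\fp]\subset\fp$ from the Cartan decomposition $\fg=\fk\oplus\fp$ recalled at the start of the section take care of the $\fg$-part in both cases, and only the Heisenberg part $(P^*,Q^*,R^*)$ remains to be examined.

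First I would verify $[\fk^J,\fk^J]\subset\fk^J$. For $\alpha,\beta\in\fk^J$ one has $P_1=Q_1=P_2=Q_2=0$, so the formulas for $P^*,Q^*,R^*$ displayed after (3.29) all vanish identically; hence the Heisenberg part of $[\alpha,\beta]$ is $(0,0,0)$, which lies in the $\fk^J$-slot, and together with the $\fg$-part lying in $\fk$ this gives $[\alpha,\beta]\in\fk^J$. Alternatively, one can check straight from (3.29), using ${}^t\!X_i=-X_i$ and $Z_i=-Y_i={}^t\!Z_i$, that ${}^t\!X^*=-X^*$, $Y^*={}^t\!Y^*$ and $Z^*=-Y^*$, which is exactly the statement that the $\fg$-block of $[\alpha,\beta]$ belongs to $\fk$.

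Next I would verify $[\fk^J,\fp^J]\subset\fp^J$. Take $\alpha\in\fk^J$ (so $P_1=Q_1=0$) and $\beta\in\fp^J$ (so $R_2=0$, with $P_2,Q_2$ arbitrary). Since $P_1=Q_1=0$, the expression $R^*=P_1\,{}^t\!Q_2-P_2\,{}^t\!Q_1+Q_2\,{}^t\!P_1-Q_1\,{}^t\!P_2$ vanishes, so the third Heisenberg coordinate of $[\alpha,\beta]$ is $0$ — precisely the condition defining the $\fp^J$-slot — while $P^*,Q^*$ are unrestricted there; combined with the $\fg$-part lying in $\fp$ this yields $[\alpha,\beta]\in\fp^J$. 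A direct check from (3.29) (using $X_1$ skew, $X_2$ symmetric, $Z_1=-Y_1$ symmetric, $Z_2=Y_2$ symmetric) likewise confirms $X^*={}^t\!X^*$, $Y^*={}^t\!Y^*$ and $Z^*=Y^*$.

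The verification is entirely mechanical, and there is no genuine obstacle. The one point deserving a little care is the bookkeeping between the two parametrizations of $\fk$ and $\fp$ and the block form of (3.29), together with the single remark that the vanishing of $P_1,Q_1$ on the $\fk^J$-factor annihilates every ``mixed'' Heisenberg term occurring in $P^*,Q^*,R^*$. In essence the lemma records that the splitting $\fg^J=\fk^J\oplus\fp^J$ is compatible both with the Cartan decomposition of the symplectic part and with the natural grading of the Heisenberg factor.
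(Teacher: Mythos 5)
Your proposal is correct and follows exactly the route the paper takes: the paper's entire proof of this lemma is the remark that it ``follows immediately from Formula (3.29),'' and your write-up is simply that verification carried out in detail (the translation between the two parametrizations of $\fk$, $\fp$ and the block form of (3.29), the vanishing of $P^*,Q^*,R^*$ when $P_1=Q_1=P_2=Q_2=0$, and the vanishing of $R^*$ when $P_1=Q_1=0$ and $R_2=0$). Nothing further is needed.
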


\begin{proof}
The proof follows immediately from Formula (3.29).
\end{proof}

\vskip 0.3cm

\begin{lemma}
Let
$$k^J = \left( \begin{pmatrix} A & -B \\ B & \ A \end{pmatrix},
(0,0,\kappa)\right)\in K^J $$
with $ \begin{pmatrix} A & -B \\ B & \ A \end{pmatrix}\in K,
\ \ka=\,^t\ka \in\BR^{(m,m)}$
and
$$\alpha=\left( \begin{pmatrix} X & \ Y\\ Y & -X \end{pmatrix},(P,Q,0)\right)
\in \fp^J$$
with $ X=\,^tX,\ Y=\,^tY\in \BR^{(n,n)},\ P,Q\in\BR^{(m,n)}.$
Then the adjoint action of $K^J$ on $\fp^J$ is given by
\begin{equation}
\textrm{Ad}(k^J)\alpha = \left( \begin{pmatrix} X_* & \ Y_*\\ Y_* & -X_* \end{pmatrix},(P_*,Q_*,0)\right),
\end{equation}
where
\begin{eqnarray}
X_* &=& AX\,^t\!A - \big( BX\,^t\!B + BY\,^t\!A + AY\,^t\!B\big),\\
Y_* &=& \big( AX\,^t\!B + AY\,^t\!A + BX\,^t\!A\big) -BY\,^t\!B,\\
P_* &=& P\,\,^t\!A - Q\,\,^t\!B,\\
Q_* &=& P\,\,^t\!B + Q\,\,^t\!A.
\end{eqnarray}
\end{lemma}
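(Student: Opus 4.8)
The plan is to compute $\textrm{Ad}(k^J)\alpha$ directly from the multiplication law of $G^J$. Recall that $\textrm{Ad}(k^J)$ is the differential at the identity of the conjugation $g\mapsto k^J g (k^J)^{-1}$, so that $\textrm{Ad}(k^J)\alpha=\tfrac{d}{dt}\big|_{t=0}\,k^J\,\gamma(t)\,(k^J)^{-1}$ for any smooth curve $\gamma$ in $G^J$ with $\gamma(0)=e$ and $\gamma'(0)=\alpha$. Two preliminary observations streamline the work. First, the relations $A\,{}^t\!A+B\,{}^t\!B=I_n$ and $A\,{}^t\!B=B\,{}^t\!A$ force $k^{-1}={}^t\!k=\begin{pmatrix}{}^t\!A & {}^t\!B\\ -{}^t\!B & {}^t\!A\end{pmatrix}$. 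Second, feeding $k^J=\big(k,(0,0;\kappa)\big)$ into the group law (with the convention $(\tilde\lambda,\tilde\mu)=(\lambda,\mu)M'$) gives $(k^J)^{-1}=\big(k^{-1},(0,0;-\kappa)\big)$.

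Since $\textrm{Ad}(k^J)$ is linear, I would split $\alpha=\alpha_1+\alpha_2$ with $\alpha_1=\big(Y_0,(0,0,0)\big)$, $Y_0=\begin{pmatrix} X & Y\\ Y & -X\end{pmatrix}\in\fp\subset\fg\subset\fg^J$, and $\alpha_2=\big(0,(P,Q,0)\big)$, and handle the two pieces separately. For $\alpha_1$, take $\gamma(t)=\big(\exp(tY_0),(0,0;0)\big)$ with $\exp(tY_0)\in Sp(n,\BR)$. In the conjugate $k^J\gamma(t)(k^J)^{-1}$ every Heisenberg translation entry stays zero and the two occurrences of $\pm\kappa$ cancel, so the product collapses to $\big(k\,\exp(tY_0)\,k^{-1},(0,0;0)\big)$; differentiating at $t=0$ gives $\textrm{Ad}(k^J)\alpha_1=\big(k\,Y_0\,{}^t\!k,(0,0,0)\big)$. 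Multiplying out the $2n\times 2n$ blocks of $k\,Y_0\,{}^t\!k$ produces exactly the blocks $X_*$ and $Y_*$ of (3.31) and (3.32), the lower two blocks coming out as $Y_*$ and $-X_*$ by the same multiplication (as they must, $k\,Y_0\,{}^t\!k$ being again in $\fp$).

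For $\alpha_2$, I would use $\gamma(t)=\big(I_{2n},(tP,tQ;\kappa(t))\big)$, where $\kappa(t)$ is any choice making $(tP,tQ;\kappa(t))$ lie in $H_{\BR}^{(n,m)}$ for all $t$ — for instance $\kappa(t)=\tfrac{t^2}{2}\big(P\,{}^t\!Q-Q\,{}^t\!P\big)$, for which $\kappa(0)=0$ and $\kappa'(0)=0$, so indeed $\gamma(0)=e$ and $\gamma'(0)=\alpha_2$. Applying the multiplication law twice gives $k^J\gamma(t)(k^J)^{-1}=\big(I_{2n},(\tilde\lambda(t),\tilde\mu(t);\kappa(t))\big)$ with $(\tilde\lambda(t),\tilde\mu(t))=(tP,tQ)\,k^{-1}=\big(t(P\,{}^t\!A-Q\,{}^t\!B),\,t(P\,{}^t\!B+Q\,{}^t\!A)\big)$; the two $\pm\kappa$ again cancel down to $\kappa(t)$ and the cocycle terms $\tilde\lambda\,{}^t\!\mu'-\tilde\mu\,{}^t\!\lambda'$ vanish because $\lambda'=\mu'=0$. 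Differentiating at $t=0$, where $\kappa'(0)=0$, gives $\textrm{Ad}(k^J)\alpha_2=\big(0,(P\,{}^t\!A-Q\,{}^t\!B,\;P\,{}^t\!B+Q\,{}^t\!A,\,0)\big)$, which are the formulas (3.33)--(3.34). Adding the two contributions yields (3.30).

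The whole argument is a bookkeeping exercise in matrix algebra; the only points that demand a little care are respecting the semidirect-product conventions — the order in $(\tilde\lambda,\tilde\mu)=(\lambda,\mu)M'$ and the sign in the cocycle $\tilde\lambda\,{}^t\!\mu'-\tilde\mu\,{}^t\!\lambda'$ — and checking that the curve used in the $\alpha_2$ step genuinely stays inside the Heisenberg group. Neither is a real obstacle. One could instead exponentiate the bracket formula (3.29), but since $K^J$ is connected the direct conjugation above is shorter and essentially self-checking.
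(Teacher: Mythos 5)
Your proof is correct, and since the paper simply states ``We leave the proof to the reader'' for this lemma, your computation supplies exactly the argument that is intended. All the delicate points check out: the identities $A\,{}^t\!A+B\,{}^t\!B=I_n$ and $A\,{}^t\!B=B\,{}^t\!A$ do give $k^{-1}={}^t\!k$; the group law with $(\tilde\lambda,\tilde\mu)=(\lambda,\mu)M'$ gives $(k^J)^{-1}=(k^{-1},(0,0;-\kappa))$ and makes the $\pm\kappa$ terms cancel in both conjugations; your curve $(tP,tQ;\kappa(t))$ with $\kappa(t)=\tfrac{t^2}{2}(P\,{}^t\!Q-Q\,{}^t\!P)$ does lie in $H_{\BR}^{(n,m)}$ (one checks $\kappa(t)+t^2Q\,{}^t\!P$ is symmetric) and has the right tangent vector; and the block products $k\,Y_0\,{}^t\!k$ and $(tP,tQ)\,{}^t\!k$ reproduce (3.31)--(3.34). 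The alternative you mention --- exponentiating the bracket formula (3.29) --- would work equally well, but your direct conjugation is the cleaner route.
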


\begin{proof}
We leave the proof to the reader.
\end{proof}

\vskip 0.3cm
We recall that $T_n$ denotes the vector space of all $n\times n$ symmetric complex matrices.
For brevity, we put $T_{n,m}:=T_n\times \BC^{(m,n)}.$
We define the real linear isomorphism $\Phi:\fp^J\lrt T_{n,m}$ by
\begin{equation}
\Phi \left( \begin{pmatrix} X & \ Y\\ Y & -X \end{pmatrix},(P,Q,0)\right)=
\big( X\,+\,i\,Y,\,P\,+\,i\,Q \big),
\end{equation}
where $\begin{pmatrix} X & \ Y\\ Y & -X\end{pmatrix}\in \fp$ and $P,Q\in \BR^{(m,n)}.$

\vskip 0.3cm Let $S(m,\BR)$ denote the additive group consisting of all $m\times m$
real symmetric matrices. Now we define the isomorphism $\theta : K^J \lrt U(n)\times S(m,\BR)$
by
\begin{equation}
\theta (h,(0,0,\kappa))= (\delta(h), \kappa),\quad h\in K, \ \kappa\in S(m,\BR),
\end{equation}
where $\delta :K\lrt U(n)$ is the map defined by (3.3).
Identifying $\BR^{(m,n)}\times \BR^{(m,n)}$ with
$\BC^{(m,n)}$, we can identify $\fp^J$ with $T_n\times
\BC^{(m,n)}$.

\vskip 0.3cm
\begin{theorem}
The adjoint representation of $K^J$ on $\fp^J$ is compatible with the {\it natural\ action} of
$U(n)\times S(m,\BR)$ on $T_{n,m}$ defined by
\begin{equation}
(h,\kappa)\cdot (\omega,z):= (h\,\omega \,^th,\,z\,^th),\qquad
h\in U(n),\ \kappa\in S(m,\BR), \ (\omega,z)\in T_{n,m}
\end{equation}
through the maps $
\Phi$ and $\theta$. Precisely, if $k^J\in K^J$ and $\alpha\in \fp^J$, then we have the
following equality
\begin{equation}
\Phi \big( Ad\big( k^J\,\big)\alpha\Big) = \theta \big( k^J\, \big)\cdot \Phi (\alpha).
\end{equation}
Here we
regard the complex vector space $T_{n,m}$ as a real vector space.
\end{theorem}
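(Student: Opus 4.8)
\vskip 2mm\noindent
\emph{Sketch of a proof.} The plan is to prove the identity (3.38) by a direct computation, comparing the two sides in each factor of $T_{n,m}=T_n\times\BC^{(m,n)}$ and using the explicit description of the adjoint action already recorded in Lemma 3.2. Fix
$$k^J=\left(\begin{pmatrix} A & -B\\ B & A\end{pmatrix},(0,0,\kappa)\right)\in K^J,\qquad
\alpha=\left(\begin{pmatrix} X & Y\\ Y & -X\end{pmatrix},(P,Q,0)\right)\in\fp^J$$
as in Lemma 3.2. By (3.30)--(3.34), the element $\textrm{Ad}(k^J)\alpha$ has $\fp$-part $\left(\begin{smallmatrix} X_* & Y_*\\ Y_* & -X_*\end{smallmatrix}\right)$ and Heisenberg part $(P_*,Q_*,0)$, so the definition (3.35) of $\Phi$ gives $\Phi\bigl(\textrm{Ad}(k^J)\alpha\bigr)=(X_*+iY_*,\,P_*+iQ_*)$. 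On the other hand, (3.36) gives $\theta(k^J)=(\delta(k),\kappa)=(A+iB,\kappa)$, and the natural action (3.37) applied to $\Phi(\alpha)=(X+iY,P+iQ)$ yields $\theta(k^J)\cdot\Phi(\alpha)=\bigl((A+iB)(X+iY)\,{}^t(A+iB),\,(P+iQ)\,{}^t(A+iB)\bigr)$. Note that the $S(m,\BR)$-component $\kappa$ makes no contribution on either side: it is annihilated by $\textrm{Ad}(k^J)$ (the third slot of $\textrm{Ad}(k^J)\alpha$ is $0$, in accordance with Lemma 3.1), and it does not appear in (3.37).

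Thus it suffices to verify the two matrix identities
$$(A+iB)(X+iY)\,{}^t(A+iB)=X_*+iY_*,\qquad (P+iQ)\,{}^t(A+iB)=P_*+iQ_*.$$
The second is immediate: since ${}^t(A+iB)={}^tA+i\,{}^tB$, expanding and separating real and imaginary parts gives $(P+iQ)({}^tA+i\,{}^tB)=(P\,{}^tA-Q\,{}^tB)+i(P\,{}^tB+Q\,{}^tA)$, which is exactly $P_*+iQ_*$ by (3.33)--(3.34). For the first identity one may expand $(A+iB)(X+iY)=(AX-BY)+i(AY+BX)$, multiply on the right by ${}^tA+i\,{}^tB$, and observe that the real part of the result is $AX\,{}^tA-BX\,{}^tB-BY\,{}^tA-AY\,{}^tB=X_*$ and its imaginary part is $AX\,{}^tB+AY\,{}^tA+BX\,{}^tA-BY\,{}^tB=Y_*$, matching (3.31)--(3.32). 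Alternatively, and more conceptually, one reads off from (3.31)--(3.32) (or from the orthogonality of $k\in K$, which gives $\textrm{Ad}(k)Z=kZ\,k^{-1}=kZ\,{}^tk$ on $\fg$) that the $\fp$-part of $\textrm{Ad}(k^J)\alpha$ is $k\left(\begin{smallmatrix} X & Y\\ Y & -X\end{smallmatrix}\right){}^tk$; then the intertwining relation (3.5), namely $\Psi(kZ\,{}^tk)=\delta(k)\,\Psi(Z)\,{}^t\delta(k)$, is precisely the first identity.

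Combining the two identities gives $\Phi\bigl(\textrm{Ad}(k^J)\alpha\bigr)=\theta(k^J)\cdot\Phi(\alpha)$, which is (3.38); as $k^J\in K^J$ and $\alpha\in\fp^J$ were arbitrary, this proves the theorem. There is no genuine obstacle here once Lemma 3.2 is available: the whole argument reduces to the elementary fact that, under the identifications $\bigl(\begin{smallmatrix} X & Y\\ Y & -X\end{smallmatrix}\bigr)\leftrightarrow X+iY$ and $\bigl(\begin{smallmatrix} A & -B\\ B & A\end{smallmatrix}\bigr)\leftrightarrow A+iB$, the block-matrix operations defining the symplectic adjoint action become ordinary complex matrix multiplication, while the Heisenberg $(P,Q)$-part transforms by the same right multiplication $z\mapsto z\,{}^th$. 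The only points deserving a little care are the transpose conventions built into $\theta(k^J)=\delta(k)$ and into the right action on the $\BC^{(m,n)}$-factor, together with the (harmless) check that $S(m,\BR)$ acts trivially on $T_{n,m}$, consistent with its lying in the part of $K^J$ that acts trivially in the adjoint picture.
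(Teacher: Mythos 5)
Your proposal is correct and follows essentially the same route as the paper: both reduce the identity to Lemma 3.2 and the elementary computation that $(A+iB)(X+iY)\,{}^t(A+iB)=X_*+iY_*$ and $(P+iQ)\,{}^t(A+iB)=P_*+iQ_*$. The extra details you supply (the explicit real/imaginary expansions and the remark on the trivial action of the $S(m,\BR)$-component) are consistent with, and merely flesh out, the argument given in the paper.
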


\begin{proof}
Let
$$k^J = \left( \begin{pmatrix} A & -B \\ B & \ A \end{pmatrix},
(0,0,\kappa)\right)\in K^J $$
with $ \begin{pmatrix} A & -B \\ B & \ A \end{pmatrix}\in K,
\ \ka=\,^t\ka \in\BR^{(m,m)}$
and
$$\alpha=\left( \begin{pmatrix} X & \ Y\\ Y & -X \end{pmatrix},(P,Q,0)\right)
\in \fp^J$$
with $ X=\,^tX,\ Y=\,^tY\in \BR^{(n,n)},\ P,Q\in\BR^{(m,n)}.$
Then we have

\begin{eqnarray*}
\theta \big( k^J\, \big)\cdot \Phi (\alpha)
&=& \big( A\,+\,i\,B,\,\kappa\big)\cdot \big(  X\,+\,i\,Y,\,P\,+\,i\,Q\big)\\
&=& \big( (A+iB)(X+iY)\,^t(A+iB),\,(P+iQ)\,^t\!(A+iB)\big)\\
&=& \big( X_*\,+\,i\,Y_*,\,P_*\,+\,i\,Q_*\big)\\
&=& \Phi  \left( \begin{pmatrix} X_* & \ Y_*\\ Y_* & -X_* \end{pmatrix},(P_*,Q_*,0)\right)\\
&=& \Phi \big( Ad\big( k^J\,\big)\alpha\Big)\qquad (by \ Lemma\ 3.2),
\end{eqnarray*}
where $X_*,Y_*,Z_*$ and $Q_*$ are given by the formulas (3.31),\,(3.32),\,(3.33) and (3.34)
respectively.
\end{proof}

\newcommand\bw{d{\overline W}}
\newcommand\bz{d{\overline Z}}
\newcommand\bo{d{\overline \Omega}}

\vskip 0.3cm We now study the algebra $\BD(\BH_{n,m})$ of all
differential operators on $\BH_{n,m}$ invariant under the {\it natural action}
(1.2) of $G^J$. The action (3.37) induces the action of $U(n)$ on
the polynomial algebra $\text{Pol}_{n,m}:=\,\text{Pol}\,(T_{n,m}).$
We denote by $\text{Pol}_{n,m}^{U(n)}$ the subalgebra of
$\text{Pol}_{n,m}$ consisting of all $U(n)$-invariants. Similarly
the action (3.30) of $K$ induces the action of $K$ on the
polynomial algebra $ \textrm{Pol}\big(\fp^J\big)$. We see that
through the identification of $\fp^J$ with $T_{n,m}$, the algebra
$ \textrm{Pol}\big(\fp^J\big)$ is isomorphic to
$\text{Pol}_{n,m}.$ The following $U(n)$-invariant inner product
$(\,\,,\,)_*$ of the complex vector space $T_{n,m}$ defined by
\begin{equation*}
\big((\om,z),(\om',z')\big)_*= \textrm{tr}\big(\om{\overline
{\om'}}\,\big)+ \textrm{tr}\big(z\,^t{\overline {z'}}\,\big),\quad
(\om,z),\,(\om',z')\in T_{n,m}
\end{equation*}

\noindent gives a canonical isomorphism
\begin{equation*}
T_{n,m}\cong\,T_{n,m}^*,\quad (\om,z)\mapsto f_{\om,z},\quad
(\om,z)\in T_{n,m},
\end{equation*}

\noindent where $f_{\om,z}$ is the linear functional on $T_{n,m}$
defined by
\begin{equation*}
f_{\om,z}\big((\om',z'\,)\big)=\big((\om',z'),(\om,z)\big)_*,\quad
(\om',z'\,)\in T_{n,m}.
\end{equation*}

\noindent According to Helgason (\cite{He2}, p.\,287), one gets a canonical linear bijection
of $S(T_{n,m})^{U(n)}$ onto $\BD(\BH_{n,m})$. Identifying
$T_{n,m}$ with $T_{n,m}^*$ by the above isomorphism, one gets a
natural linear bijection
$$\Theta_{n,m}:\,\text{Pol}^{U(n)}_{n,m}\lrt \BD(\BH_{n,m})$$
of $\text{Pol}^{U(n)}_{n,m}$ onto $\BD(\BH_{n,m}).$ The map
$\Theta_{n,m}$ is described explicitly as follows. We put
$N_{\star}=n(n+1)+2mn$. Let $\big\{ \eta_{\alpha}\,|\ 1\leq \alpha
\leq N_{\star}\, \big\}$ be a basis of $\fp^J$. If $P\in
\textrm{Pol}\big(\fp^J\big)^K=\mathrm{Pol}_{n,m}^{U(n)}$, then
\begin{equation}
\Big(\Theta_{n,m} (P)f\Big)(gK^J)=\left[ P\left( {{\partial}\over
{\partial t_{\al}}}\right)f\left(g\,\text{exp}\,
\left(\sum_{\al=1}^{N_{\star}} t_{\al}\eta_{\al}\right)
K^J\right)\right]_{(t_{\al})=0},
\end{equation}

\noindent where $g\in G^J$ and $f\in C^{\infty}({\mathbb H}_{n,m})$. In general,
it is hard to express $\Theta_{n,m}(P)$ explicitly for a polynomial
$P\in \textrm{Pol}\big(\fp^J\big)^K$.

 \vskip 0.355cm We propose the following natural problems.

\vskip 0.2cm \noindent $ \textbf{Problem 1.}$ Find a complete list of explicit generators
of $\text{Pol}_{n,m}^{U(n)}$.

\vskip 0.32cm \noindent $ \textbf{Problem 2.}$ Find all the relations among a set of generators of $\text{Pol}_{n,m}^{U(n)}$.

\vskip 0.32cm \noindent $ \textbf{Problem 3.}$ Find an easy or effective way to
express the images of the above invariant polynomials or generators of $\text{Pol}_{n,m}^{U(n)}$ under the
Helgason map $\Theta_{n,m}$ explicitly.


\vskip 0.32cm \noindent $ \textbf{Problem 4.}$ Find a complete list of
 explicit generators of the algebra $\BD(\BH_{n,m})$. Or
construct explicit $G^J$-invariant differential operators on $\BH_{n,m}.$

\vskip 0.32cm \noindent $ \textbf{Problem 5.}$ Find all the relations among a set of generators of $\BD(\BH_{n,m})$.

\vskip 0.32cm \noindent $ \textbf{Problem 6.}$ Is $\text{Pol}_{n,m}^{U(n)}$ finitely generated ?

\vskip 0.32cm \noindent $ \textbf{Problem 7.}$ Is $\BD(\BH_{n,m})$ finitely generated ?


\vskip 0.5cm
We will give answers to Problems 1, 2 and 6.

\vskip 0.25cm
We put $\varphi^{(2k)} = \operatorname{tr}((w\bar{w})^k)$.
Moreover, for $1\leq a,b\leq m$ and $k \geq 0$, we put
\begin{alignat*}{2}
   \psi^{(0,2k,0)}_{ba} &= (\bar{z}(w\bar{w})^k \, {}^t\!z)_{ba}, & \qquad
   \psi^{(1,2k,0)}_{ba} &= (z\bar{w}(w\bar{w})^k \, {}^t\!z)_{ba}, \\
   \psi^{(0,2k,1)}_{ba} &= (\bar{z}(w\bar{w})^k w\, {}^t\!\bar{z})_{ba}, & \qquad
   \psi^{(1,2k,1)}_{ba} &= (z\bar{w}(w\bar{w})^k w\, {}^t\!\bar{z})_{ba}.
\end{alignat*}

\noindent Then we have the following relations:
\begin{equation}\label{eq:relations_of_generators}
   \varphi^{(2k)} = \bar{\varphi}^{(2k)}, \
   \psi^{(1,2k,1)}_{ab} = \psi^{(0,2k+2,0)}_{ba}, \
   \psi^{(1,2k,0)}_{ab} = \psi^{(1,2k,0)}_{ba} =
   \bar{\psi}^{(0,2k,1)}_{ab} = \bar{\psi}^{(0,2k,1)}_{ba}.
\end{equation}

\noindent Then we have the following theorem:

\begin{theorem}\label{thm:FFT1}
   The algebra $\operatorname{Pol}^{U(n)}_{n,m}$ is generated by the following polynomials\/{\rm :}
   $$
      \varphi^{(2k+2)}, \qquad
      \operatorname{Re} \psi^{(0,2k,0)}_{ab}, \qquad
      \operatorname{Im} \psi^{(0,2k,0)}_{cd}, \qquad
      \operatorname{Re} \psi^{(1,2k,0)}_{ab}, \qquad
      \operatorname{Im} \psi^{(1,2k,0)}_{ab}.
   $$
   Here the indices run as follows:
   $$
      0 \leq k \leq n-1, \qquad
      1 \leq a \leq b \leq m, \qquad
      1 \leq c < d \leq m.
   $$
\end{theorem}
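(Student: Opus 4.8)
\emph{Sketch of proof.} The plan is to complexify the $U(n)$-action, reduce to computing $GL(n,\BC)$-invariants, apply the First Fundamental Theorem of classical invariant theory, and then cut the resulting generating set down to the one stated, using the Cayley--Hamilton theorem, the relations \eqref{eq:relations_of_generators}, and the Hermitian resp. complex-symmetric structure of the $\psi$-matrices. First I would split each coordinate of $T_{n,m}$ into its holomorphic and antiholomorphic parts, so that $\operatorname{Pol}_{n,m}=\BC[\,w_{\mu\nu}\ (\mu\le\nu),\ \bar w_{\mu\nu},\ z_{kl},\ \bar z_{kl}\,]$. In the natural action $(h,\kappa)\cdot(w,z)=(hw\,{}^th,\ z\,{}^th)$ the holomorphic variables transform by $h\in U(n)$ and the conjugate variables by $\bar h={}^t\!h^{-1}$; reading the latter as the contragredient action, the $U(n)$-action extends to a rational action of $GL(n,\BC)$ on the coordinate ring. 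Since invariance of a fixed polynomial under a fixed group element is a Zariski-closed condition and $U(n)$ is Zariski-dense in $GL(n,\BC)$, one gets $\operatorname{Pol}^{U(n)}_{n,m}=\operatorname{Pol}(U)^{GL(n,\BC)}$, where, with $V=\BC^n$,
$$U=\operatorname{Sym}^2V\ \oplus\ \operatorname{Sym}^2V^\ast\ \oplus\ V^{\oplus m}\ \oplus\ (V^\ast)^{\oplus m},$$
$w$ lies in $\operatorname{Sym}^2V\subset V\otimes V=\operatorname{Hom}(V^\ast,V)$, $\bar w$ lies in $\operatorname{Sym}^2V^\ast\subset V^\ast\otimes V^\ast=\operatorname{Hom}(V,V^\ast)$, and the rows of $z$ resp. of $\bar z$ are vectors in $V$ resp. covectors in $V^\ast$.

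Because $GL(n,\BC)$ is reductive, restriction of invariants from $(V\otimes V)\oplus(V^\ast\otimes V^\ast)\oplus V^{\oplus m}\oplus(V^\ast)^{\oplus m}$ to $U$ is surjective, so it suffices to let $w,\bar w$ range over unrestricted tensors and invoke the mixed-tensor First Fundamental Theorem for $GL(n,\BC)$ (Weyl \cite{W}, Howe \cite{Ho}). Regarding $w,\bar w$ as the homomorphisms above, so that $w\bar w\in\operatorname{End}(V)$, every invariant is a polynomial in the complete contractions along alternating words in $w$ and $\bar w$: the closed words produce $\operatorname{tr}\bigl((w\bar w)^k\bigr)=\varphi^{(2k)}$, and the open words, capped at the two ends by a row of $z$ or of $\bar z$, produce the entries of $\bar z(w\bar w)^k\,{}^t\!z$, of $z\bar w(w\bar w)^k\,{}^t\!z$, of $\bar z(w\bar w)^kw\,{}^t\!\bar z$, and of $z(\bar ww)^k\,{}^t\!\bar z$, i.e. the families $\psi^{(0,2k,0)}$, $\psi^{(1,2k,0)}$, $\psi^{(0,2k,1)}$ and, since $\bar w(w\bar w)^{k-1}w=(\bar ww)^k$, the family $\psi^{(1,2k-2,1)}$ for $k\ge1$ (the case $k=0$ of the last being $\psi^{(0,0,0)}$). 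Hence $\operatorname{Pol}^{U(n)}_{n,m}$ is generated by the $\varphi^{(2k)}$ and the four $\psi$-families, $k\ge0$.

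It remains to trim. By \eqref{eq:relations_of_generators} each $\psi^{(1,2k,1)}_{ab}$ equals $\psi^{(0,2k+2,0)}_{ba}$ and each $\psi^{(0,2k,1)}_{ab}$ is the complex conjugate of $\psi^{(1,2k,0)}_{ab}$, so only the $\varphi^{(2k)}$, the $\psi^{(0,2k,0)}$ and the $\psi^{(1,2k,0)}$ need be kept. Next, Cayley--Hamilton for the $n\times n$ matrix $w\bar w$ together with Newton's identities expresses $(w\bar w)^k$ for every $k\ge n$ as a $\BC[\varphi^{(2)},\dots,\varphi^{(2n)}]$-combination of $I_n,w\bar w,\dots,(w\bar w)^{n-1}$; multiplying on suitable sides by $\bar z,{}^t\!z,z\bar w$ this reduces every remaining generator to one with $0\le k\le n-1$, and every $\varphi$ to $\varphi^{(2)},\dots,\varphi^{(2n)}$. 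Finally $\bigl(\psi^{(0,2k,0)}_{ba}\bigr)_{a,b}=\bar z(w\bar w)^k\,{}^t\!z$ is Hermitian while $\bigl(\psi^{(1,2k,0)}_{ba}\bigr)_{a,b}=z\bar w(w\bar w)^k\,{}^t\!z$ is complex symmetric, so over $\BC$ their entries are spanned by $\operatorname{Re}\psi^{(0,2k,0)}_{ab}$, $\operatorname{Im}\psi^{(0,2k,0)}_{cd}$ with $a\le b$, $c<d$, and by $\operatorname{Re}\psi^{(1,2k,0)}_{ab}$, $\operatorname{Im}\psi^{(1,2k,0)}_{ab}$ with $a\le b$; recalling that $\operatorname{Pol}_{n,m}$ is closed under complex conjugation, this gives exactly the generating list of Theorem~\ref{thm:FFT1}.

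The step carrying the real content is the invariant theory of the second paragraph: setting up and applying the mixed-tensor First Fundamental Theorem for $GL(n,\BC)$ acting on copies of $V$, $V^\ast$, $V\otimes V$ and $V^\ast\otimes V^\ast$ (equivalently, describing the invariants of representations of the quiver with two vertices joined by one arrow in each direction and with a framing at each vertex), and translating its two families of generators --- traces around oriented loops and matrix coefficients along oriented paths --- cleanly into the $\varphi$ and $\psi$ symbols, while checking that nothing outside those families appears. A technically lighter way to handle the two symmetric-square summands is the dense-orbit reduction: $GL(n,\BC)$ acts on $\operatorname{Sym}^2V$ with open dense orbit and generic isotropy $O(n,\BC)$, so that $\operatorname{Pol}(U)^{GL(n,\BC)}\cong\operatorname{Pol}\bigl(\operatorname{Sym}^2V\oplus V^{\oplus 2m}\bigr)^{O(n,\BC)}$; one then applies the First Fundamental Theorem for the complex orthogonal group and re-expresses the orthogonal invariants through the $\varphi,\psi$ families. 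Either way, once the generating set $\{\varphi^{(2k)},\psi^{(\varepsilon,2k,\varepsilon')}\}$ is in place, the degree truncation and the passage to real and imaginary parts are routine.
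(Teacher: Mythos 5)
Your sketch is correct, but note that the paper itself proves almost nothing at this point: its entire proof of the statement is the reduction to Theorem~\ref{thm:FFT2} via the relations \eqref{eq:relations_of_generators}, and Theorem~\ref{thm:FFT2} is in turn only cited to Itoh--Ochiai--Yang \cite{IOY}, with the remark that it follows from the first fundamental theorem for vector invariants. Your closing paragraph (conjugation-closure of the generating set, Hermitian symmetry of $\bar z(w\bar w)^k\,{}^t\!z$, complex symmetry of $z\bar w(w\bar w)^k\,{}^t\!z$, passage to real and imaginary parts with the stated index ranges) is precisely the paper's one-line reduction made explicit, and your middle paragraphs supply the content the paper outsources. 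Of your two routes to the raw generating set, the second --- restricting one symmetric-square factor to the dense $GL(n,\BC)$-orbit so as to land in vector invariants of $O(n,\BC)$ --- is the one the paper alludes to; your primary route through the mixed-tensor first fundamental theorem for $GL(n,\BC)$ (complete contractions organize into traces of closed alternating words in $w,\bar w$ and open words capped by rows of $z$ and $\bar z$) is a legitimate and in some ways cleaner alternative, since it sidesteps the delicate point of the slice argument, namely that restriction of invariants to the slice need not be surjective onto all $O(n,\BC)$-invariant polynomials without further justification. Two details worth writing out if you expand the sketch: surjectivity of restriction of invariants from the space of unrestricted tensors to the symmetric-square summand uses complete reducibility together with exactness of the invariants functor for a reductive group; and the substitution $\psi^{(1,2n-2,1)}_{ab}=\psi^{(0,2n,0)}_{ba}$ momentarily produces the index $k=n$ outside the stated range, so the Cayley--Hamilton truncation must be invoked after (or once more after) applying the relations --- your ordering handles this, but it deserves an explicit sentence.
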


This is seen from the following theorem
by using (3.40):

\begin{theorem}\label{thm:FFT2}
   The algebra $\operatorname{Pol}_{n,m}^{U(n)}$ is generated by
   $\varphi^{(2k+2)}$,
   $\psi^{(0,2k,0)}_{ba}$,
   $\psi^{(0,2k,1)}_{ba}$,
   and
   $\psi^{(1,2k,0)}_{ba}$.
   Here the indices run as follows\/{\rm :}
   $$
      0 \leq k \leq n-1, \qquad
      1\leq a,b \leq m.
   $$
\end{theorem}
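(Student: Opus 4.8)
The plan is to reduce the statement to a classical first fundamental theorem of invariant theory for the unitary group $U(n)$, applied to the pair of ``vector'' variables that make up $T_{n,m} = T_n \times \BC^{(m,n)}$. First I would make the $U(n)$-action explicit: under (3.37), with $h \in U(n)$, the symmetric matrix $w \in T_n$ transforms as $w \mapsto h w\, {}^t h$ and the rows of $z \in \BC^{(m,n)}$ transform as $z \mapsto z\, {}^t h$; equivalently $\bar z \mapsto \bar z\, {}^t \bar h = \bar z\, h^{-1}$ (since ${}^t h = h^{-1}$ for $h$ unitary is false — rather $\bar h = {}^t h^{-1}$, so I will be careful here: $h \in U(n)$ means $h\, {}^t\bar h = I_n$, hence ${}^t\bar h = h^{-1}$ and $\bar z \mapsto \bar z\, {}^t \bar h$). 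The point is that $w$ is a symmetric covariant tensor of rank $2$ and $z, \bar z$ are bundles of covariant/contravariant vectors for the standard representation of $GL(n,\BC)$ restricted to $U(n)$. Since $U(n)$ is Zariski-dense in $GL(n,\BC)$, a polynomial on $T_{n,m}$ is $U(n)$-invariant if and only if it is $GL(n,\BC)$-invariant for the corresponding rational action, except that the symmetric matrix $w$ and its conjugate $\bar w$ must be treated as formally independent symmetric variables. Thus I would first pass to the ``polarized'' setting: regard $w$ and $\bar w$ as independent $n\times n$ symmetric matrices, and $z$, $\bar z$ as independent $m\times n$ matrices, and classify the $GL(n,\BC)$-invariants of this configuration.

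The second step is to invoke (or reprove) the first fundamental theorem for $GL(n,\BC)$ acting on a collection of symmetric bilinear forms together with vectors and covectors in the standard representation: all invariants are generated by complete contractions. Concretely, the only way to build a scalar from $w$ (a form on the dual), $\bar w$ (another form on the dual), the rows of $z$ (covectors), and the rows of $\bar z$ (covectors of the conjugate type, i.e. effectively vectors after using one of the forms, or alternatively covectors that must be paired through $w$ or $\bar w$) is to form alternating words in $w, \bar w$ and cap them off. This produces exactly the traces $\operatorname{tr}((w\bar w)^k)$ — that is $\varphi^{(2k)}$ — together with the bilinear expressions of the four shapes $\bar z (w\bar w)^k\, {}^t z$, $z\bar w (w\bar w)^k\, {}^t z$, $\bar z (w\bar w)^k w\, {}^t \bar z$, $z\bar w(w\bar w)^k w\, {}^t\bar z$, which are precisely $\psi^{(0,2k,0)}_{ba}$, $\psi^{(1,2k,0)}_{ba}$, $\psi^{(0,2k,1)}_{ba}$, $\psi^{(1,2k,1)}_{ba}$. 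Here I would need the bound $k \le n-1$: by the Cayley–Hamilton theorem applied to the $n\times n$ matrix $w\bar w$, every power $(w\bar w)^k$ with $k \ge n$ is a polynomial in $(w\bar w)^0, \dots, (w\bar w)^{n-1}$ with coefficients that are themselves polynomials in the $\varphi^{(2j)}$ (via Newton's identities expressing elementary symmetric functions of eigenvalues in terms of power sums $\operatorname{tr}((w\bar w)^j)$). Hence the generators with $k \ge n$ are redundant, and one may truncate all index ranges at $k \le n-1$. This establishes Theorem 3.5.

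The third step is the (easy) passage from Theorem 3.5 to Theorem 3.4 using the relations (3.40). The relation $\psi^{(1,2k,1)}_{ab} = \psi^{(0,2k+2,0)}_{ba}$ shows the $\psi^{(1,\ast,1)}$ family is already subsumed (so it did not even appear in Theorem 3.5), and $\psi^{(1,2k,0)}_{ab} = \psi^{(1,2k,0)}_{ba} = \overline{\psi^{(0,2k,1)}_{ab}}$ shows that the $\psi^{(0,2k,1)}$ generators are complex conjugates of the $\psi^{(1,2k,0)}$ ones, so over $\BC$ (or rather, since we want \emph{real} generators of this \emph{complex} algebra viewed appropriately — actually here $\operatorname{Pol}_{n,m}^{U(n)}$ is a $\BC$-algebra, and one simply notes $\psi^{(0,2k,1)}$ lies in the algebra generated by $\psi^{(1,2k,0)}$ and its conjugate). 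Replacing each complex generator $\psi$ by its real and imaginary parts $\operatorname{Re}\psi, \operatorname{Im}\psi$ generates the same subalgebra; and $\varphi^{(2k)}$ is already real by the first relation in (3.40). Finally the symmetry $\psi^{(1,2k,0)}_{ab} = \psi^{(1,2k,0)}_{ba}$ and $\psi^{(0,2k,0)}_{ba}$ being symmetric in $a,b$ for the ``Re'' part and antisymmetric for the ``Im'' part (since $\psi^{(0,2k,0)}_{ab} = (\bar z(w\bar w)^k\,{}^tz)_{ab}$ and transposing swaps $a \leftrightarrow b$ while $(w\bar w)^k$ is... here I must check: $\,{}^t(\bar z(w\bar w)^k\,{}^tz) = z\,{}^t((w\bar w)^k)\,{}^t\bar z = z(\bar w w)^k\,{}^t\bar z$, which is the conjugate of $\bar z(w\bar w)^k\,{}^tz$ only after further manipulation) — this index bookkeeping is what pins down the ranges $1 \le a \le b \le m$ for the symmetric generators and $1 \le c < d \le m$ for the purely imaginary off-diagonal ones.

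\textbf{Main obstacle.} The genuine content is the first fundamental theorem invoked in step two — proving that there are \emph{no} other independent invariants beyond the complete contractions, in the presence of the symmetric forms $w, \bar w$ (as opposed to generic matrices). The subtlety is that $w$ and $\bar w$ are symmetric, so one cannot directly cite the FFT for $GL(n)$ acting on tuples of vectors and covectors; one must either cite the FFT for the orthogonal-type situation with two forms, or — more in the spirit of the paper's reference to \cite{Ho, W} — use Weyl's unitary trick and the classical description of $U(n)$-invariants of mixed tensors, carefully tracking that a symmetric form can be used to raise indices but contributes a trace word $(w\bar w)^k$ when chained. Verifying that the chained words do not collapse further (i.e. that the listed generators are not redundant beyond what Cayley–Hamilton forces) is the part requiring real care; the Cayley–Hamilton truncation to $k \le n-1$ and the conjugation/transposition bookkeeping in step three are routine by comparison.
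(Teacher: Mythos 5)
The paper does not actually contain a proof of this statement: the displayed ``proof'' is the citation to Theorem 3.3 of \cite{IOY}, together with the later remark that the result ``follows from the first fundamental theorem of invariant theory for vector invariants.'' Measured against that, your plan follows the same classical route and gets the surrounding bookkeeping right: complexify using Zariski density of $U(n)$ in $GL(n,\BC)$ so that $w,\overline{w},z,\overline{z}$ become independent variables; observe that complete contractions of the two symmetric tensors with the rows of $z$ and $\overline{z}$ produce exactly the closed words $\varphi^{(2k)}=\operatorname{tr}\big((w\overline{w})^k\big)$ and the open chains $\psi^{(0,2k,0)},\ \psi^{(1,2k,0)},\ \psi^{(0,2k,1)},\ \psi^{(1,2k,1)}$ (the last being redundant by (3.40)); and truncate to $k\le n-1$ by Cayley--Hamilton together with Newton's identities expressing the coefficients of the characteristic polynomial of $w\overline{w}$ through $\varphi^{(2)},\dots,\varphi^{(2n)}$. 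Your third step (taking real and imaginary parts) concerns Theorem \ref{thm:FFT1}, not the statement at hand, but it is harmless and its conjugation bookkeeping is correct.

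The substantive issue is the one you flag yourself: the claim that \emph{all} invariants are spanned by complete contractions is not the vectors-and-covectors FFT, because $w$ and $\overline{w}$ are symmetric two-tensors, and your proposal names two possible ways to handle this without carrying either out. That step is the entire content of the theorem, and it is precisely what the citation to \cite{IOY} supplies. In line with the paper's remark, the standard way to complete your sketch is the substitution/polarization reduction to vector invariants: replace $w$ by $u\,{}^tu$ (and dually $\overline{w}$ by a symmetric square of auxiliary covector variables), note that a $U(n)$-invariant of $(w,z)$ pulls back to a $GL(n,\BC)$-invariant system of vectors and covectors which is in addition invariant under the orthogonal group acting on the auxiliary index, apply the FFT for $GL(n,\BC)$ vector invariants to write it in terms of the elementary pairings, and then use the orthogonal FFT in the dummy index to descend, which collapses the pairings into the words $(w\overline{w})^k$ sandwiched between rows of $z$ and $\overline{z}$; alternatively one may cite an FFT for $GL(n,\BC)$ acting on $S^2V\oplus S^2V^{*}\oplus V^{\oplus m}\oplus (V^{*})^{\oplus m}$ directly (cf.\ \cite{Ho}, \cite{W}). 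Without one of these, what you have is a correct outline with the decisive step left as a black box --- though, to be fair, the paper itself defers exactly this step to \cite{IOY}.
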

\vskip 2mm\noindent
{\it Proof.} See Theorem 3.3 in \cite{IOY}. \hfill $\square$

\vskip 0.5cm
Problem 2, that is, the second fundamental theorem for $\operatorname{Pol}_{n,m}^{U(n)}$ is stated as follows.
We consider indeterminates $\tilde{\omega}^{(2k+2)}$
and $\tilde{\psi}^{(\varepsilon,2k,\varepsilon')}_{ba}$
corresponding to $\omega^{(2k+2)}$ and $\psi^{(\varepsilon,2k,\varepsilon')}_{ba}$,
respectively.
For these, we assume the relations
$$
   \tilde{\psi}^{(1,2k,1)}_{ba} = \tilde{\psi}^{(0,2k+2,0)}_{ab}, \qquad
   \tilde{\psi}^{(1,2k,0)}_{ab} = \tilde{\psi}^{(1,2k,0)}_{ba}, \qquad
   \tilde{\psi}^{(0,2k,1)}_{ab} = \tilde{\psi}^{(0,2k,1)}_{ba}.
$$
We denote by $\tilde{\mathcal{Q}}$ the polynomial algebra in the following indeterminates:
$$
   \tilde{\omega}^{(2k+2)}, \quad
   \tilde{\psi}^{(0,2k,0)}_{ba}, \quad
   \tilde{\psi}^{(0,2k,1)}_{ba}, \quad
   \tilde{\psi}^{(1,2k,0)}_{ba}.
$$
Here the indices run as follows:
$$
   0 \leq k \leq n-1, \qquad
    1\leq a,b\leq m.
$$
The relations among the generators of $\operatorname{Pol}_{n,m}^{U(n)}$
are described as follows:

\begin{theorem}
   The kernel of the natural map from $\tilde{\mathcal{Q}}$ to $\operatorname{Pol}_{n,m}^{U(n)}$
   is generated by the entries of $A^{(q)}_{(c,\varepsilon),(c',\varepsilon')} B^{(q)}$
   with
   $$
      q \in \{ 2,3,\ldots,n+1 \}, \qquad\quad
      \varepsilon = (\varepsilon_1,\ldots,\varepsilon_q), \quad
      \varepsilon' = (\varepsilon'_1,\ldots,\varepsilon'_q) \in \{ 0,1 \}^q,
   $$
   $$
      c = (c_1,\ldots,c_q), \quad
      c' = (c'_1,\ldots,c'_q) \in \{1,\cdots,m\}^q.
   $$
\end{theorem}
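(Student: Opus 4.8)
The statement above is the \emph{second fundamental theorem} for the invariant ring $\operatorname{Pol}_{n,m}^{U(n)}$, the natural companion of the first fundamental theorem (Theorems~\ref{thm:FFT1} and \ref{thm:FFT2}). The plan is to reduce everything to classical invariant theory for the complex reductive group $GL_n(\BC)$. First, by Weyl's unitarian trick, a $U(n)$-invariant polynomial in the entries of $w,\bar w,z,\bar z$ --- viewed now as independent variables --- is the same thing as a polynomial invariant for the algebraic $GL_n(\BC)$-action $g\cdot(w,\bar w,z,\bar z)=(gw\,{}^tg,\ {}^tg^{-1}\bar w g^{-1},\ z\,{}^tg,\ \bar z g^{-1})$; this is legitimate because $U(n)$ is Zariski-dense in $GL_n(\BC)$ and $\bar h={}^th^{-1}$ for $h\in U(n)$, so $\operatorname{Pol}_{n,m}^{U(n)}=\BC[w,\bar w,z,\bar z]^{GL_n(\BC)}$. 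Setting $M=w\bar w$, which transforms by conjugation $M\mapsto gMg^{-1}$, one checks that $\varphi^{(2k)}=\operatorname{tr}(M^k)$, while each $\psi^{(\varepsilon,2k,\varepsilon')}_{ba}$ is a bilinear form $\langle\,\text{covector}\,,M^k,\,\text{vector}\,\rangle$, where the covectors are the rows of $\bar z$ and of $z\bar w$ and the vectors are the columns of ${}^tz$ and of $w\,{}^t\bar z$. In this picture Theorem~\ref{thm:FFT2} is exactly the first fundamental theorem for $GL_n(\BC)$ acting on one adjoint variable together with $2m$ vectors and $2m$ covectors, and the range $0\le k\le n-1$ (resp.\ $1\le k\le n$ for $\varphi$) is forced by the Cayley--Hamilton theorem for $M$.

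Next one isolates the two sources of relations. The symmetry $w={}^tw$, $\bar w={}^t\bar w$ is responsible \emph{precisely} for the identities already imposed in $\tilde{\mathcal Q}$: for example $\langle\eta,\,w\,{}^t\mu\rangle=\langle\mu,\,w\,{}^t\eta\rangle$ for covectors $\eta,\mu$ built from the rows of $\bar z,\,z\bar w$ forces $\tilde\psi^{(1,2k,0)}_{ab}=\tilde\psi^{(1,2k,0)}_{ba}$, $\tilde\psi^{(0,2k,1)}_{ab}=\tilde\psi^{(0,2k,1)}_{ba}$ and $\tilde\psi^{(1,2k,1)}_{ba}=\tilde\psi^{(0,2k+2,0)}_{ab}$, and one verifies directly that symmetry produces nothing further --- equivalently, that a product $w\bar w$ of two generic symmetric matrices is a generic element of $\mathfrak{gl}_n(\BC)$ (every matrix is similar to its transpose and factors as a product of two symmetric matrices). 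The remaining relations are the classical Cayley--Hamilton / $(n+1)$-fold antisymmetrization relations of the second fundamental theorem for $GL_n(\BC)$ with vectors and covectors. One writes them out explicitly: for $2\le q\le n+1$ and index strings $(c,\varepsilon),(c',\varepsilon')\in(\{1,\dots,m\}\times\{0,1\})^q$ one forms the $q\times q$ matrix $A^{(q)}_{(c,\varepsilon),(c',\varepsilon')}$ whose entries are the appropriate $\tilde\psi$'s and the column $B^{(q)}$ built from the $\tilde\omega^{(2j)}$ (via Newton's identities, so that $B^{(q)}$ encodes the characteristic polynomial of $M$); the entries of $A^{(q)}B^{(q)}$ are then the candidate generators of the kernel, the case $q=n+1$ being the ``master'' $(n+1)$-minor identity and the cases $q\le n$ accounting for the corrections forced by rewriting $M^n,\,M^{n+1},\dots$ in terms of $M^0,\dots,M^{n-1}$.

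It then remains to prove that these relations generate the \emph{whole} kernel of $\tilde{\mathcal Q}\to\operatorname{Pol}_{n,m}^{U(n)}$. The plan is a Hilbert-series comparison. On one side, $\operatorname{Pol}_{n,m}^{U(n)}$ has a Hilbert series computable from the FFT by Weyl's integration formula (Molien's formula for $U(n)$), and in particular it is a normal domain of Krull dimension $\dim_{\BR}T_{n,m}-\dim U(n)=\big(n(n+1)+2mn\big)-n^2=n+2mn$. On the other side, one introduces a monomial order on $\tilde{\mathcal Q}$ and shows, by a straightening law (or an explicit Gr\"obner-basis reduction with the listed $A^{(q)}B^{(q)}$ as leading terms), that modulo the listed relations every element of $\tilde{\mathcal Q}$ reduces to a $\BC$-combination of ``standard'' monomials whose count in each degree matches the Hilbert function of $\operatorname{Pol}_{n,m}^{U(n)}$; since the quotient surjects onto the target, the two being equi-dimensional normal and of equal Hilbert series forces the surjection to be an isomorphism.

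The main obstacle is this last step. Converting the abstract second fundamental theorem into the precise finite list indexed by $q\in\{2,\dots,n+1\}$, and proving that no relation of larger ``length'' is needed, requires carrying out the straightening combinatorics in full; moreover one must run it \emph{relative to} the symmetry of $w,\bar w$, so that the bookkeeping reproduces the Hilbert series of $\operatorname{Pol}_{n,m}^{U(n)}$ itself rather than the strictly larger one attached to an unconstrained matrix variable. The uniformity of the bound $q\le n+1$ in $m$ is exactly the ``relations generated in bounded degree'' phenomenon of classical invariant theory, and making that quantitative is where the real work lies.
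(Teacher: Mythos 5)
The paper does not actually prove this theorem: it states only that the result ``is deduced from the second fundamental theorem of invariant theory for vector invariants,'' parallel to the way Theorem 3.3 follows from the first fundamental theorem, and defers all details to a forthcoming paper. Your overall strategy --- complexify to $GL(n,\BC)$ by Zariski density of $U(n)$, recognize the generators as invariants of an adjoint-type variable $M=w\bar w$ together with vectors and covectors, and then invoke the classical second fundamental theorem plus a straightening/Hilbert-series argument --- is consistent with that one-line description, so you have identified the intended framework.

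Nevertheless the proposal has two genuine gaps. First, the reduction to the classical setting is not ``exact'' as claimed: the covectors $z\bar w$ and vectors $w\,{}^t\bar z$ are not independent of the adjoint variable $M=w\bar w$, since they are obtained from the rows of $\bar z$ and columns of ${}^tz$ by applying the symmetric factors $w$ and $\bar w$ of $M$. Consequently the kernel of $\tilde{\mathcal{Q}}\to\operatorname{Pol}_{n,m}^{U(n)}$ contains the classical SFT ideal but could a priori be strictly larger, namely by relations that hold identically on the (closure of the) image of $(w,\bar w,z,\bar z)\mapsto(M,\,{}^tz,\,\bar z,\,w\,{}^t\bar z,\,z\bar w)$ without holding on the ambient space of an unconstrained adjoint variable and free vectors/covectors. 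Your argument that ``symmetry produces nothing further'' rests on the dominance of $(w,\bar w)\mapsto w\bar w$, which does not address this coupling; showing that the only extra relations are the ones already built into $\tilde{\mathcal{Q}}$ (together with the listed $A^{(q)}B^{(q)}$) is precisely the non-classical content of the theorem and is nowhere established. Second, even granting the reduction, the step that the entries of $A^{(q)}_{(c,\varepsilon),(c',\varepsilon')}B^{(q)}$ for $2\le q\le n+1$ generate the whole kernel is only announced: the Hilbert-series comparison is sound as a principle for graded algebras with finite-dimensional graded pieces, but it requires computing the Hilbert series of the quotient of $\tilde{\mathcal{Q}}$ by the candidate ideal, i.e.\ exhibiting the standard-monomial basis via a straightening law or Gr\"obner basis, and you explicitly leave this undone. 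As it stands the proposal is a plausible programme, not a proof.
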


Here the notation is as follows.
We put
$$
   \Delta^{(q),(\lambda_1,\lambda_2)}_{(c,\varepsilon),(c',\varepsilon')} =
   \sum_{l_1 + \cdots + l_q = \lambda_1 + \lambda_2}
   K_{(\lambda_1,\lambda_2),(l_1,\ldots,l_q)}
   \sum_{\sigma \in S_q} \operatorname{sgn}(\sigma)
   \tilde{\psi}^{(\varepsilon_1,2l_1,\varepsilon'_1)}_{c_1 c'_{\sigma(1)}}
   \cdots \tilde{\psi}^{(\varepsilon_q,2l_q,\varepsilon'_q)}_{c_q c'_{\sigma(q)}}.
$$
Here $K_{\lambda,\mu}$  means the Kostka number.
Namely, in general, we define $K_{\lambda,\mu}$ by
$$
   s_{\lambda}(u_1,\ldots,u_q)
   = \sum_{l_1,\ldots,l_q \geq 0} K_{\lambda,(l_1,\ldots,l_q)} u_1^{l_1} \cdots u_q^{l_q},
$$
where $s_{\lambda}$ is the Schur polynomial.
In other words, $\Delta^{(q),(\lambda_1,\lambda_2)}_{(c,\varepsilon),(c',\varepsilon')}$ is
the image of the Schur polynomial $s_{(\lambda_1,\lambda_2)}(u_1,\ldots,u_q)$
under the linear map
$$
   u_1^{l_1} \cdots u_q^{l_q} \mapsto
   \sum_{\sigma \in S_q} \operatorname{sgn}(\sigma)
   \tilde{\psi}^{(\varepsilon_1,2l_1,\varepsilon'_1)}_{c_1c'_{\sigma(1)}} \cdots
   \tilde{\psi}^{(\varepsilon_q,2l_q,\varepsilon'_q)}_{c_qc'_{\sigma(q)}}.
$$

Moreover we replace $\tilde{\psi}^{(1,2n-2,1)}_{ba}$
in $\Delta^{(q),(\lambda_1,\lambda_2)}_{(c,\varepsilon),(c',\varepsilon')}$ by
$$
   \sum_{k=1}^n (-1)^{k-1} \tilde{\omega}^{(2k)} \tilde{\psi}^{0,2n-2k,0}_{ab}.
$$
Finally $A^{(q)}_{(c,\varepsilon),(c',\varepsilon')}$ and $B^{(q)}$ are the following matrices
(an alternating matrix of size $q'+2$ and a $(q'+2) \times 1$ matrix):
\begin{align*}
   A^{(q)}_{(c,\varepsilon),(c',\varepsilon')} &=
   \begin{pmatrix}
   0                            & \Delta^{(q),(0,0)}_{(c,\varepsilon),(c',\varepsilon')}   & \Delta^{(q),(1,0)}_{(c,\varepsilon),(c',\varepsilon')}   & \hdots & \Delta^{(q),(q',0)}_{(c,\varepsilon),(c',\varepsilon')} \\[5pt]
   -\Delta^{(q),(0,0)}_{(c,\varepsilon),(c',\varepsilon')}  & 0                            & \Delta^{(q),(1,1)}_{(c,\varepsilon),(c',\varepsilon')}   & \hdots & \Delta^{(q),(q',1)}_{(c,\varepsilon),(c',\varepsilon')} \\[5pt]
   -\Delta^{(q),(1,0)}_{(c,\varepsilon),(c',\varepsilon')}  & -\Delta^{(r),(1,1)}_{(c,\varepsilon),(c',\varepsilon')}  & 0                            & \hdots & \Delta^{(q),(q',2)}_{(c,\varepsilon),(c',\varepsilon')} \\[5pt]
   \vdots                       & \vdots                       & \vdots                       & \ddots & \vdots \\[5pt]
   -\Delta^{(q),(q',0)}_{(c,\varepsilon),(c',\varepsilon')} & -\Delta^{(q),(q',1)}_{(c,\varepsilon),(c',\varepsilon')} & -\Delta^{(q),(c',2)}_{(c,\varepsilon),(c',\varepsilon')} & \hdots & 0
   \end{pmatrix}, \\[5pt]
   B^{(q)} &=
   \begin{pmatrix}
   (-1)^{q'}\tilde{\omega}^{(q')} \\
   \vdots \\
   \tilde{\omega}^{(2)} \\
   -\tilde{\omega}^{(1)} \\
   \tilde{\omega}^{(0)}
   \end{pmatrix}.
\end{align*}
Here we put $q' = n+1-q$.

\vskip 0.3cm
The proof of Theorem 3.4 is complicated, but it is deduced from the second fundamental theorem
of invariant theory for vector invariants (this is quite parallel with the fact that Theorem 3.3
follows from the first fundamental theorem of invariant theory for vector invariants). The detail will
be given in the forthcoming paper.

\begin{remark}
Itoh, Ochiai and Yang \cite{IOY} solved all the problems (Problem 1--Problem 7) proposed in this section when $n=m=1.$
\end{remark}

\vskip 0.3cm We present some interesting $U(n)$-invariants.
For an $m\times m$ matrix $S$, we define the
following invariant polynomials in $\text{Pol}_{n,m}^{U(n)}$:
\begin{eqnarray*}
&&
m_{j;S}^{(1)}(\om,z)=\,\textrm{Re}\,\Big(\text{tr}\big(\om{\overline
\om}+ \,^tzS{\overline
z}\,\big)^j\,\Big),\quad 1\leq j\leq n,\\
&&
m_{j;S}^{(2)}(\om,z)=\,\textrm{Im}\,\Big(\text{tr}\big(\om{\overline
\om}+ \,^tzS{\overline
z}\,\big)^j\,\Big),\quad 1\leq j\leq n,\\
&&  q_{k;S}^{(1)}(\om,z)=\,\textrm{Re}\,\Big( \textrm{tr}\big( (
\,^tz\,S\,{\overline
z})^k\big) \Big),\quad 1\leq k\leq m,  \\
&&  q_{k;S}^{(2)}(\om,z)=\,\textrm{Im}\,\Big( \textrm{tr}\big( (
\,^tz\,S\,{\overline
z})^k\big) \Big),\quad 1\leq k\leq m,  \\
&& \theta_{i,k,j;S}^{(1)}(\om,z) =\,\textrm{Re}\,\Big(
\textrm{tr}\big( (\om {\overline \om})^i\,(\,^tz\,S\,{\overline
z})^k\,(\om {\overline \om}+\,^tz\,S\,{\overline
z}\,)^j\,\big)\Big),\\
&& \theta_{i,k,j;S}^{(2)}(\om,z) =\,\textrm{Im}\,\Big(
\textrm{tr}\big( (\om {\overline \om})^i\,(\,^tz\,S\,{\overline
z})^k\,(\om {\overline \om}+\,^tz\,S\,{\overline
z}\,)^j\,\big)\Big),
\end{eqnarray*}
\noindent where $1\leq i,j\leq n$ and $1\leq k\leq m$.

\vskip 0.2cm We define the following $U(n)$-invariant polynomials in
$\text{Pol}_{n,m}^{U(n)}$.
\begin{eqnarray*}
&&  r_{jk}^{(1)}(\om,z)= \,\textrm{Re}\,\Big( \textrm{det}\big(
(\om {\overline \om})^j\,(\,^tz{\overline z})^k\,\big)\Big),
\quad 1\leq j\leq n,\ 1\leq k\leq m,\\
&&  r_{jk}^{(2)}(\om,z)= \,\textrm{Im}\,\Big( \det\big(
(\om {\overline \om})^j\,(\,^tz{\overline z})^k\,\big)\Big), \quad
1\leq j\leq n,\ 1\leq k\leq m.
\end{eqnarray*}

\end{section}

\vskip 10mm

\begin{section}{{\bf The Partial Cayley Transform}}
\setcounter{equation}{0}

\newcommand\OW{\overline{W}}
\newcommand\OP{\overline{P}}
\newcommand\OQ{\overline{Q}}
\newcommand\OZ{\overline{Z}}
\newcommand\Dn{{\mathbb D}_n}
\newcommand\Dnm{{\mathbb D}_{n,m}}
\newcommand\Hn{{\mathbb H}_n}
\newcommand\SJ{{\mathbb H}_n\times {\mathbb C}^{(m,n)}}
\newcommand\DC{{\mathbb D}_n\times {\mathbb C}^{(m,n)}}
\newcommand\ot{\overline\eta}
\newcommand\PW{ {{\partial}\over {\partial W}} }
\newcommand\PZB{ {{\partial}\over {\partial{\overline Z}}} }
\newcommand\PWB{ {{\partial}\over {\partial{\overline W}}} }
\newcommand\PE{ {{\partial}\over {\partial \eta}} }
\newcommand\POB{ {{\partial}\over {\partial{\overline \Omega}}} }
\newcommand\PEB{ {{\partial}\over {\partial{\overline \eta}}} }

\vskip 0.21cm
Let
\begin{equation*}
\BD_n=\left\{ W\in \BC^{(n,n)}\,|\ W=\,{}^tW,\ I_n-\OW W >
0\,\right\}
\end{equation*}
be the generalized unit disk. We set
\begin{equation*}
G_*=T^{-1} Sp(n,\BR)\, T,\qquad T:={1\over {\sqrt 2}}\, \begin{pmatrix} I_n & \ I_n \\ i I_n & -iI_n \end{pmatrix}.
\end{equation*}
It is easily seen that
\begin{equation*}
G_*=\left\{\, \begin{pmatrix} P & Q \\ {\overline Q} & {\overline P} \end{pmatrix}\in \BC^{(2n,2n)}\,\Big|\ {}^tP {\overline P} - \,{}^t{\overline Q} Q=I_n,
\ {}^tP {\overline Q}= \,{}^t{\overline Q} P \ \right\}.
\end{equation*}
Then $G_*$ acts on $\BD_n$ transitively by
\begin{equation}
\begin{pmatrix} P & Q \\ {\overline Q} & {\overline P} \end{pmatrix}\cdot W =(PW+Q)({\overline Q}W+{\overline P})^{-1},\qquad
\begin{pmatrix} P & Q \\ {\overline Q} & {\overline P} \end{pmatrix}\in G_*,\ W\in \BD_n.
\end{equation}
It is well known that the action (1.1) is compatible with the action (4.1) through the Cayley transform $\Phi:\BD_n \lrt \BH_n$ given by
\begin{equation}
\Phi (W):=i\,(I_n+W)(I_n-W)^{-1},\qquad W\in \BD_n.
\end{equation}
In other words, if $M\in Sp(n,\BR)$ and $W\in \BD_n,$ then
\begin{equation}
M\!\cdot\! \Phi (W)=\Phi (M_*\!\cdot\! W),
\end{equation}
where $M_*=T^{-1} M T$. We refer to \cite{KW} for generalized Cayley transforms of bounded symmetric domains.

\vskip 0.2cm For brevity, we write $\Dnm:=\DC.$  This homogeneous space $\Dnm$ is called
the {\it Siegel-Jacobi disk} of degree $n$ and index $m$. For a coordinate
$(W,\eta)\in\Dnm$ with $W=(w_{\mu\nu})\in {\mathbb D}_n$ and
$\eta=(\eta_{kl})\in \Cmn,$ we put
\begin{eqnarray*}
dW\,&=&\,(dw_{\mu\nu}),\quad\ \ d{\overline W}\,=\,(d{\overline w}_{\mu\nu}),\\
d\eta\,&=&\,(d\eta_{kl}),\quad\ \
d{\overline\eta}\,=\,(d{\overline\eta}_{kl})
\end{eqnarray*}
and
\begin{eqnarray*}
\PW\,=\,\left(\, { {1+\delta_{\mu\nu}} \over 2}\, {
{\partial}\over {\partial w_{\mu\nu}} } \,\right),\quad
\PWB\,=\,\left(\, { {1+\delta_{\mu\nu}}\over 2} \, {
{\partial}\over {\partial {\overline w}_{\mu\nu} }  } \,\right),
\end{eqnarray*}

$$\PE=\begin{pmatrix} {\partial}\over{\partial \eta_{11}} & \hdots &
 {\partial}\over{\partial \eta_{m1}} \\
\vdots&\ddots&\vdots\\
 {\partial}\over{\partial \eta_{1n}} &\hdots & {\partial}\over
{\partial \eta_{mn}} \end{pmatrix},\quad \PEB=\begin{pmatrix}
{\partial}\over{\partial {\overline \eta}_{11} }   &
\hdots&{ {\partial}\over{\partial {\overline \eta}_{m1} }  }\\
\vdots&\ddots&\vdots\\
{ {\partial}\over{\partial{\overline \eta}_{1n} }  }&\hdots &
 {\partial}\over{\partial{\overline \eta}_{mn} }  \end{pmatrix}.$$

\vskip 0.2cm  We can identify an element $g=(M,(\la,\mu;\kappa))$
of $G^J,\ M=\begin{pmatrix} A&B\\
C&D\end{pmatrix}\in Sp(n,\BR)$ with the element
\begin{equation*}
\begin{pmatrix} A & 0 & B & A\,^t\mu-B\,^t\la  \\ \la & I_m & \mu
& \kappa \\ C & 0 & D & C\,^t\mu-D\,^t\la \\ 0 & 0 & 0 & I_m
\end{pmatrix}
\end{equation*}
of $Sp(m+n,\BR).$ \vskip 0.3cm We set
\begin{equation*}
T_*={1\over {\sqrt 2}}\,
\begin{pmatrix} I_{m+n} & I_{m+n}\\ iI_{m+n} & -iI_{m+n}
\end{pmatrix}.
\end{equation*}
We now consider the group $G_*^J$ defined by
\begin{equation*}
G_*^J:=T_*^{-1}G^JT_*.
\end{equation*}
If $g=(M,(\la,\mu;\kappa))\in G^J$ with $M=\begin{pmatrix} A&B\\
C&D\end{pmatrix}\in Sp(n,\BR)$, then $T_*^{-1}gT_*$ is given by
\begin{equation*}
T_*^{-1}gT_*=
\begin{pmatrix} P_* & Q_*\\ {\overline Q}_* & {\overline P}_*
\end{pmatrix},
\end{equation*}
where
\begin{equation*}
P_*=
\begin{pmatrix} P & {\frac 12} \left\{ Q\,\,{}^t(\la+i\mu)-P\,\,{}^t(\la-i\mu)\right\}\\
{\frac 12} (\la+i\mu) & I_h+i{\frac \kappa 2}
\end{pmatrix},
\end{equation*}

\begin{equation*}
Q_*=
\begin{pmatrix} Q & {\frac 12} \left\{ P\,\,{}^t(\la-i\mu)-Q\,\,{}^t(\la+i\mu)\right\}\\
{\frac 12} (\la-i\mu) & -i{\frac \kappa 2}
\end{pmatrix},
\end{equation*}
and $P,\,Q$ are given by the formulas
\begin{equation}
P= {\frac 12}\,\left\{ (A+D)+\,i\,(B-C)\right\}
\end{equation}
and
\begin{equation}
 Q={\frac
12}\,\left\{ (A-D)-\,i\,(B+C)\right\}.
\end{equation}
From now on, we write
\begin{equation*}
\left(\begin{pmatrix} P & Q\\ {\overline Q} & {\overline P}
\end{pmatrix},\left( {\frac 12}(\la+i\mu),\,{\frac 12}(\la-i\mu);\,-i{\kappa\over 2}\right)\right):=
\begin{pmatrix} P_* & Q_*\\ {\overline Q}_* & {\overline P}_*
\end{pmatrix}.
\end{equation*}
In other words, we have the relation
\begin{equation*}
T_*^{-1}\left( \begin{pmatrix} A&B\\
C&D\end{pmatrix},(\la,\mu;\kappa)
\right)T_*=  \left(\begin{pmatrix} P & Q\\
{\overline Q} & {\overline P}
\end{pmatrix},\left(
{\frac 12}(\la+i\mu),\,{\frac 12}(\la-i\mu);\,-i{\kappa\over 2}
\right)\right).
\end{equation*}
Let
\begin{equation*}
H_{\BC}^{(n,m)}:=\left\{ (\xi,\eta\,;\zeta)\,|\
\xi,\eta\in\BC^{(m,n)},\ \zeta\in\BC^{(m,m)},\
\zeta+\eta\,{}^t\xi\ \textrm{symmetric}\,\right\}
\end{equation*}
be the complex Heisenberg group endowed with the following
multiplication
\begin{equation*}
(\xi,\eta\,;\zeta)\circ
(\xi',\eta';\zeta'):=(\xi+\xi',\eta+\eta'\,;\zeta+\zeta'+
\xi\,{}^t\eta'-\eta\,{}^t\xi')).
\end{equation*}
We define the semidirect product
\begin{equation*}
SL(2n,\BC)\ltimes H_{\BC}^{(n,m)}
\end{equation*}
endowed with the following multiplication
\begin{eqnarray*}
& & \left( \begin{pmatrix} P & Q\\ R & S
\end{pmatrix}, (\xi,\eta\,;\zeta)\right)\cdot \left( \begin{pmatrix} P' & Q'\\
R' & S'
\end{pmatrix}, (\xi',\eta';\zeta')\right)\\
&=& \left( \begin{pmatrix} P & Q\\ R & S
\end{pmatrix}\,\begin{pmatrix} P' & Q'\\ R' & S'
\end{pmatrix},\,({\tilde \xi}+\xi',{\tilde
\eta}+\eta';\zeta+\zeta'+{\tilde \xi}\,{}^t\eta'-{\tilde
\eta}\,{}^t\xi')  \right),
\end{eqnarray*}
where ${\tilde\xi}=\xi P'+\eta R'$ and ${\tilde \eta}=\xi Q'+\eta
S'.$

\vskip 0.2cm If we identify $H_{\BR}^{(n,m)}$ with the subgroup
$$\left\{ (\xi,{\overline \xi};i\kappa)\,|\ \xi\in\BC^{(m,n)},\
\ka\in\BR^{(m,m)}\,\right\}$$ of $H_{\BC}^{(n,m)},$ we have the
following inclusion
$$G_*^J\subset SU(n,n)\ltimes H_{\BR}^{(n,m)}\subset SL(2n,\BC)\ltimes
H_{\BC}^{(n,m)}.$$ We define the mapping $\Theta:G^J\lrt G_*^J$ by
\begin{equation}\Theta\!
\left(\! \begin{pmatrix} A&B\\
C&D\end{pmatrix}\!,(\la,\mu;\kappa) \right)\!=\!\left( \!\begin{pmatrix} P
& Q\\ {\overline Q} & {\overline P}
\end{pmatrix},\left(
{\frac 12}(\la+i\mu),\,{\frac 12}(\la-i\mu);\,-i{\kappa\over 2}
\right)\!\right),\end{equation}
where $P$ and $Q$ are given by (4.4)
and (4.5). We can see that if $g_1,g_2\in G^J$, then
$\Theta(g_1g_2)=\Theta(g_1)\Theta(g_2).$

\vskip 0.2cm According to \cite[p.\,250]{YJH7}, $G_*^J$ is of the
Harish-Chandra type\,(cf.\,\cite[p.\,118]{Sa2}). Let
$$g_*=\left(\begin{pmatrix} P & Q\\
{\overline Q} & {\overline P}
\end{pmatrix},\left( \la, \mu;\,\kappa\right)\right)$$
be an element of $G_*^J.$ Since the Harish-Chandra decomposition
of an element $\begin{pmatrix} P & Q\\ R & S
\end{pmatrix}$ in $SU(n,n)$ is given by
\begin{equation*}
\begin{pmatrix} P & Q\\ R & S
\end{pmatrix}=\begin{pmatrix} I_n & QS^{-1}\\ 0 & I_n
\end{pmatrix} \begin{pmatrix} P-QS^{-1}R & 0\\ 0 & S
\end{pmatrix} \begin{pmatrix} I_n & 0\\ S^{-1}R & I_n
\end{pmatrix},
\end{equation*}
the $P_*^+$-component of the following element
$$g_*\cdot\left( \begin{pmatrix} I_n & W\\ 0 & I_n
\end{pmatrix}, (0,\eta;0)\right),\quad W\in \BD_n$$
of $SL(2n,\BC)\ltimes H_{\BC}^{(n,m)}$ is given by
\begin{equation}
\left( \! \begin{pmatrix} I_n & (PW+Q)(\OQ W+\OP)^{-1}
\\ 0 & I_n
\end{pmatrix}\!,\left(0,\,(\eta+\la W+\mu)(\OQ W+\OP)^{-1}\,;0\right)\!\right).
\end{equation}

\vskip 0.2cm We can identify $\Dnm$ with the subset
\begin{equation*}
\left\{ \left( \begin{pmatrix} I_n & W\\ 0 & I_n
\end{pmatrix}, (0,\eta;0)\right)\,\Big|\ W\in\BD_n,\
\eta\in\BC^{(m,n)}\,\right\}\end{equation*} of the
complexification of $G_*^J.$ Indeed, $\Dnm$ is embedded into
$P_*^+$ given by
\begin{equation*}
P_*^+=\left\{\,\left( \begin{pmatrix} I_n & W\\ 0 & I_n
\end{pmatrix}, (0,\eta;0)\right)\,\Big|\ W=\,{}^tW\in \BC^{(n,n)},\
\eta\in\BC^{(m,n)}\ \right\}.
\end{equation*}
This is a generalization of the Harish-Chandra
embedding\,(cf.\,\cite[p.\,119]{Sa2}). Then we get the {\it natural
transitive action} of $G_*^J$ on $\Dnm$ defined by
\begin{eqnarray}
& &\left(\begin{pmatrix} P & Q\\
{\overline Q} & {\overline P}
\end{pmatrix},\left( \xi, {\overline\xi};\,i\kappa\right)\right)\cdot
(W,\eta)\\
&=&\Big((PW+Q)(\OQ W+\OP)^{-1},(\eta+\xi
W+{\overline\xi})(\OQ W+\OP)^{-1}\Big),\nonumber
\end{eqnarray}

\noindent where $\begin{pmatrix} P & Q\\
{\overline Q} & {\overline P}
\end{pmatrix}\in G_*,\ \xi\in \BC^{(m,n)},\ \kappa\in\BR^{(m,m)}$ and
$(W,\eta)\in\Dnm.$

\vskip 0.2cm The author \cite{YJH10} proved that the action (1.2) of $G^J$
on $\Hnm$ is compatible with the action (4.8) of $G_*^J$ on $\Dnm$
through the {\it partial Cayley transform} $\Psi:\BD_{n,m}\lrt
\BH_{n,m}$ defined by
\begin{equation}
\Psi(W,\eta):=\Big(
i(I_n+W)(I_n-W)^{-1},\,2\,i\,\eta\,(I_n-W)^{-1}\Big).
\end{equation}
In other words, if $g_0\in G^J$ and $(W,\eta)\in\BD_{n,m}$,
\begin{equation}
g_0\cdot\Psi(W,\eta)=\Psi(g_*\cdot (W,\eta)),
\end{equation}
where $g_*=T_*^{-1}g_0 T_*$. $\Psi$ is a biholomorphic mapping of
$\Dnm$ onto $\Hnm$ which gives the partially bounded realization
of $\Hnm$ by $\Dnm$. The inverse of $\Psi$ is
\begin{equation}
\Psi^{-1}(\Omega,Z)=\Big(
(\Omega-iI_n)(\Omega+iI_n)^{-1},\,Z(\Omega+iI_n)^{-1}\Big).
\end{equation}

\end{section}

\newpage
\begin{section}{{\bf Invariant Metrics and Laplacians on the Siegel-Jacobi Disk}}
\setcounter{equation}{0}
\newcommand\Dnm{{\mathbb D}_{n,m}}
\newcommand\OW{\overline{W}}
\newcommand\OP{\overline{P}}
\newcommand\OQ{\overline{Q}}
\newcommand\OZ{\overline{Z}}
\newcommand\ot{\overline\eta}

\newcommand\bw{d{\overline W}}
\newcommand\bz{d{\overline Z}}
\newcommand\bo{d{\overline \Omega}}

\newcommand\PE{ {{\partial}\over {\partial \eta}} }
\newcommand\POB{ {{\partial}\over {\partial{\overline \Omega}}} }
\newcommand\PEB{ {{\partial}\over {\partial{\overline \eta}}} }

\newcommand\PW{ {{\partial}\over{\partial W}} }
\newcommand\PWB{ {{\partial}\over {\partial{\overline W}}} }
\newcommand\OVW{\overline W}

\vskip 0.2cm
For $W=(w_{ij})\in \BD_n,$ we write $dW=(dw_{ij})$ and
$d{\overline W}=(d{\overline{w}}_{ij})$. We put $$\PW=\,\left(\, {
{1+\delta_{ij}}\over 2}\, { {\partial}\over {\partial w_{ij} } }
\,\right) \qquad\text{and}\qquad \PWB=\,\left(\, {
{1+\delta_{ij}}\over 2}\, { {\partial}\over {\partial {\overline
{w}}_{ij} } } \,\right).$$

Using the Cayley transform $\Psi:\BD_n\lrt \BH_n$, Siegel \cite{Si1} showed that
\begin{equation}
ds_*^2=4\, \s \Big((I_n-W{\overline W})^{-1}dW\,(I_n-\OVW
W)^{-1}d\OVW\,\Big)\end{equation} is a $G_*$-invariant Riemannian
metric on $\BD_n$ and Maass \cite{M1} showed that its Laplacian is
given by
\begin{equation}
\Delta_*=\,\s \left( (I_n-W\OW)\,
{}^{{}^{{}^{{}^\text{\scriptsize $t$}}}}\!\!\!\left( (I_n-W\OVW)
\PWB\right)\PW\right).\end{equation}

\vskip 2mm
Yang \cite{YJH11} proved the following theorems.
\begin{theorem}
For any two positive real numbers $A$ and $B$,
the following metric $d{\tilde s}^2_{n,m;A,B}$ defined by
\begin{eqnarray*}
d{s}^2_{\Dnm;A,B}&=&4\,A\, \sigma  \Big( (I_n-W\OW)^{-1}dW(I_n-\OW W)^{-1}\bw\,\Big) \hskip 1cm\\
& &\,+\,4\,B\,\bigg\{ \sigma   \Big(
(I_n-W\OW)^{-1}\,{}^t(d\eta)\,\be\,\Big)\\
& & \quad\quad\quad
\,+\,\sigma   \Big(  (\eta\OW-{\overline\eta})(I_n-W\OW)^{-1}dW(I_n-\OW W)^{-1}\,{}^t(d\ot)\Big)\\
& & \quad\quad\quad  +\,\sigma   \Big( (\ot W-\eta)(I_n-\OW
W)^{-1}d\OW(I_n-W\OW)^{-1}\,{}^t(d\eta)\,\Big)    \\
& &\quad\quad\quad -\, \sigma   \Big( (I_n-W\OW)^{-1}\,{}^t\eta\,\eta\,
(I_n-\OW W)^{-1}\OW
dW (I_n-\OW W)^{-1}d\OW \, \Big)\\
& &\quad\quad\quad -\, \sigma   \Big( W(I_n-\OW W)^{-1}\,{}^t\ot\,\ot\,
(I_n-W\OW )^{-1}
dW (I_n-\OW W)^{-1}d\OW \,\Big)\\
& &\quad\quad\quad +\,\sigma    \Big( (I_n-W\OW)^{-1}{}^t\eta\,\ot \,(I_n-W\OW)^{-1} dW (I_n-\OW W)^{-1} d\OW\,\Big)\\
& &\quad\quad\quad +\,\sigma
\Big( (I_n-\OW)^{-1}\,{}^t\ot\,\eta\,\OW\,(I_n-W\OW)^{-1} dW (I_n-\OW W)^{-1} d\OW\,\Big)\\
& &\quad\quad\quad +\,\sigma \!  \Big( (I_n-\OW)^{-1}(I_n-W)(I_n-\OW
W)^{-1}\,{}^t\ot\,\eta\,(I_n-\OW W)^{-1}\\
& &\qquad\qquad\quad\quad \times\, (I_n-\OW)(I_n-W)^{-1}dW
(I_n-\OW W)^{-1}d\OW\,\Big)\\
& &\quad\quad\quad -\,\sigma
\Big( (I_n-W\OW)^{-1}(I_n-W)(I_n-\OW)^{-1}\,{}^t\ot\,\eta\,(I_n-W)^{-1}\\
& & \qquad\qquad\quad\quad \times\,dW (I_n-\OW
W)^{-1}d\OW\,\Big)\bigg\}
\end{eqnarray*}
\noindent is a Riemannian metric on $\Dnm$ which is invariant
under the action (4.8) of the Jacobi group $G^J_*$.
\end{theorem}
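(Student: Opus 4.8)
The plan is to transport the $G^J$-invariant K\"ahler metric $ds^2_{n,m;A,B}$ on $\BH_{n,m}$ of Theorem~2.1 back to the Siegel-Jacobi disk $\Dnm$ via the partial Cayley transform $\Psi:\Dnm\lrt \BH_{n,m}$ of (4.9), and to verify that the pullback $\Psi^*\big(ds^2_{n,m;A,B}\big)$ equals the metric $ds^2_{\Dnm;A,B}$ displayed in the statement. Since $\Psi$ is a biholomorphism (this is recorded right after (4.9)) and since, by (4.10), for every $g_0\in G^J$ one has $g_0\cdot\Psi(W,\eta)=\Psi(g_*\cdot(W,\eta))$ with $g_*=T_*^{-1}g_0T_*$, the $G^J$-invariance of $ds^2_{n,m;A,B}$ under (1.2) translates immediately into $G^J_*$-invariance of $\Psi^*\big(ds^2_{n,m;A,B}\big)$ under (4.8): for $g_*\in G^J_*$,
$$
(g_*)^*\Psi^*\big(ds^2_{n,m;A,B}\big)=\Psi^*\big((g_0)^*ds^2_{n,m;A,B}\big)=\Psi^*\big(ds^2_{n,m;A,B}\big).
$$
Thus once the identity $\Psi^*\big(ds^2_{n,m;A,B}\big)=ds^2_{\Dnm;A,B}$ is established, invariance under (4.8) is automatic and nothing further need be checked; in particular the formula in the statement is, a fortiori, a genuine Riemannian (indeed K\"ahler) metric, being the pullback of one under a biholomorphism.

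The computational heart is therefore the change of variables. Writing $\Omega=i(I_n+W)(I_n-W)^{-1}$ and $Z=2i\,\eta(I_n-W)^{-1}$, I would first record the differentials: from $\Omega=i(I_n+W)(I_n-W)^{-1}$ one gets $d\Omega=2i\,(I_n-W)^{-1}dW\,(I_n-W)^{-1}$, and from $Z=2i\,\eta(I_n-W)^{-1}$ one gets $dZ=2i\,(d\eta)(I_n-W)^{-1}+2i\,\eta(I_n-W)^{-1}dW\,(I_n-W)^{-1}$, with the conjugate formulas for $d\overline\Omega,\,d\overline Z$. Next I would compute $Y=\mathrm{Im}\,\Omega$ and $V=\mathrm{Im}\,Z$ in terms of $W,\eta$: the standard Cayley identity gives $Y=(I_n-\overline W)^{-1}(I_n-W\overline W)(I_n-W)^{-1}$ (equivalently $Y^{-1}=(I_n-W)(I_n-W\overline W)^{-1}(I_n-\overline W)$), and a parallel manipulation expresses $V$ through $\eta,\overline\eta,W,\overline W$. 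Substituting all of these into the five $\sigma$-terms of $ds^2_{n,m;A,B}$ and simplifying, using the cyclicity of the trace and the symmetry $W={}^tW$, $\overline W={}^t\overline W$ throughout, should reproduce term by term the $A$-part $4A\,\sigma\big((I_n-W\overline W)^{-1}dW(I_n-\overline W W)^{-1}d\overline W\big)$ and then, after expanding the products coming from $dZ$ and from $V$, the eight $B$-terms listed in the statement (the asymmetric-looking ones with factors $(I_n-\overline W)^{-1}(I_n-W)$ etc. are exactly the residue of writing $V$ and $Y^{-1}$ in the "split" Cayley form).

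The main obstacle is precisely this last bookkeeping step: the $B$-block of $ds^2_{n,m;A,B}$ contains four mixed $\sigma$-terms, each of which splits into several pieces once $dZ$ (two summands) and $V$ (a difference) are substituted, and one must watch the non-commutativity of $W$, $\overline W$, $\eta$, $\overline\eta$ and the placement of the factors $(I_n-W\overline W)^{-1}$ versus $(I_n-\overline W W)^{-1}$ versus $(I_n-W)^{-1}$, $(I_n-\overline W)^{-1}$. Grouping the resulting terms by their differential type — pure $d\eta\,d\overline\eta$, mixed $dW\,d\overline\eta$ and $d\overline W\,d\eta$, and pure $dW\,d\overline W$ — and checking that the coefficients collapse to the eight displayed expressions is the tedious part; I would organize it by that trichotomy and use trace-cyclicity aggressively. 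An alternative, essentially equivalent, route is to forgo the pullback computation and instead verify invariance of $ds^2_{\Dnm;A,B}$ directly: check invariance under the three generating types of elements of $G^J_*$ — the "Heisenberg" translations $\big(I_{2n},(\xi,\overline\xi;i\kappa)\big)$, the diagonal block elements with $Q=0$ (these are the $U(n)$-type isometries), and one further one-parameter family moving $W$ — but this is more laborious than exploiting (4.10), so I would keep the Cayley-pullback as the primary argument and cite \cite{YJH11} for the displayed identity. \hfill $\Box$
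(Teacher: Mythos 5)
Your approach---pulling the invariant metric of Theorem 2.1 back under the partial Cayley transform $\Psi$ and deducing $G^J_*$-invariance automatically from the compatibility relation (4.10)---is exactly the route taken in \cite{YJH11}, to which the paper delegates the proof of this theorem, so your plan is essentially the paper's own. One caution for the bookkeeping step: since $W$ and $\overline W$ do not commute, the correct Cayley identity is $Y=(I_n-\overline W)^{-1}(I_n-\overline W W)(I_n-W)^{-1}$, equivalently $Y^{-1}=(I_n-W)(I_n-\overline W W)^{-1}(I_n-\overline W)$ (or its transpose $(I_n-\overline W)(I_n-W\overline W)^{-1}(I_n-W)$), not $(I_n-W)(I_n-W\overline W)^{-1}(I_n-\overline W)$ as you wrote, and this ordering must be kept straight when substituting into the $B$-block.
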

\vskip 2mm\noindent
{\it Proof.} See Theorem 1.3 in \cite{YJH11}.  \hfill $\Box$
\newcommand\pdx{ {{\partial}\over{\partial x}} }
\newcommand\pdy{ {{\partial}\over{\partial y}} }
\newcommand\pdu{ {{\partial}\over{\partial u}} }
\newcommand\pdv{ {{\partial}\over{\partial v}} }
\newcommand\PZ{ {{\partial}\over {\partial Z}} }
\newcommand\PZB{ {{\partial}\over {\partial{\overline Z}}} }
\newcommand\PX{ {{\partial\ }\over{\partial X}} }
\newcommand\PY{ {{\partial\ }\over {\partial Y}} }
\newcommand\PU{ {{\partial\ }\over{\partial U}} }
\newcommand\PV{ {{\partial\ }\over{\partial V}} }
\renewcommand\th{\theta}
\renewcommand\l{\lambda}
\renewcommand\k{\kappa}

\begin{theorem}
The following
differential operators ${\mathbb S}_1$ and ${\mathbb S}_2$ on $\Dnm$ defined by
\begin{equation*}
{\mathbb S}_1=\,\s\left( (I_n-\OW W)\PE {}^{{}^{{}^{{}^\text{\scriptsize $t$}}}}\!\!\!\left(\PEB\right)\right)
\end{equation*}
\noindent and
\begin{eqnarray*}
{\mathbb S}_2&=& \, \sigma \left( (I_n-W\OW)\,{}^{{}^{{}^{{}^\text{\scriptsize $t$}}}}\!\!\!\left(
(I_n-W\OW)\PWB\right)\PW\right)\,\\
& &   +\,\sigma \left(\,{}^t(\eta-\ot\,W)\,{}^{{}^{{}^{{}^\text{\scriptsize $t$}}}}\!\!\!\left( \PEB\right)
(I_n-\OW W)\PW  \right)\,\\
& &  +\, \sigma \left( (\ot-\eta\,\OW)\,{}^{{}^{{}^{{}^\text{\scriptsize $t$}}}}\!\!\!\left(
(I_n-W\OW)\PWB\right)\PE\right)\\
& & -\, \sigma  \left( \eta \OW
(I_n-W\OW)^{-1}\,{}^t\eta\,{}^{{}^{{}^{{}^\text{\scriptsize $t$}}}}\!\!\!\left(\PEB\right)(I_n-\OW
W)\PE\right)\\
& & -\, \sigma  \left( \ot W (I_n-\OW W)^{-1}
\,{}^t\ot\,{}^{{}^{{}^{{}^\text{\scriptsize $t$}}}}\!\!\!\left(\PEB\right)(I_n-\OW
W)\PE\right)\\
& & +\, \sigma  \left( \ot (I_n-W\OW)^{-1}{}^t\eta\,{}^{{}^{{}^{{}^\text{\scriptsize $t$}}}}\!\!\!\left(
\PEB\right)
(I_n-\OW W)\PE \right)\\
& &  +\, \sigma  \left( \eta\,\OW W (I_n-\OW
W)^{-1}\,{}^t\ot\,{}^{{}^{{}^{{}^\text{\scriptsize $t$}}}}\!\!\!\left( \PEB\right) (I_n-\OW W)\PE
\right)
\end{eqnarray*}

\noindent are invariant under the action (4.8) of $G_*^J.$ The following differential operator
\begin{equation}
\Delta_{{\mathbb D}_{n,m};A,B}:=\,{\frac 1A}\,{\mathbb S}_2\,+\,{\frac 1B}\,{\mathbb S}_1
\end{equation}
is the Laplacian of the invariant metric $ds^2_{{\mathbb D}_{n,m};A,B}$ on $\Dnm$.
\end{theorem}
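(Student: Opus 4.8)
The plan is to transport everything through the partial Cayley transform $\Psi:\BD_{n,m}\lrt\BH_{n,m}$ of (4.10), exploiting the fact (equation (4.11)) that the $G^J$-action (1.2) on $\BH_{n,m}$ and the $G^J_*$-action (4.8) on $\BD_{n,m}$ are intertwined by $\Psi$. Concretely, since $ds^2_{n,m;A,B}$ on $\BH_{n,m}$ is $G^J$-invariant (Theorem 2.1), its pullback $\Psi^*(ds^2_{n,m;A,B})$ is automatically a $G^J_*$-invariant Riemannian metric on $\BD_{n,m}$; so the first task is to verify that this pullback equals the metric $ds^2_{\BD_{n,m};A,B}$ written down in Theorem 5.1. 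That is precisely the content of Theorem 5.1 (whose proof we may cite), so the invariance of $ds^2_{\BD_{n,m};A,B}$ is already in hand. The remaining content of Theorem 5.2 is twofold: (i) the operators $\mathbb S_1,\mathbb S_2$ are $G^J_*$-invariant, and (ii) the particular combination $\frac1A\mathbb S_2+\frac1B\mathbb S_1$ is the Laplace--Beltrami operator of $ds^2_{\BD_{n,m};A,B}$.

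For part (ii) I would argue by naturality of the Laplacian under isometries. The Laplace--Beltrami operator is a metric invariant, so if $\varphi:(M,g)\to(N,h)$ is an isometry then $\varphi^*\circ\Delta_h=\Delta_g\circ\varphi^*$. Taking $\varphi=\Psi$, $g=\Psi^*(ds^2_{n,m;A,B})=ds^2_{\BD_{n,m};A,B}$, $h=ds^2_{n,m;A,B}$, we get $\Delta_{\BD_{n,m};A,B}=\Psi^*\circ\Delta_{n,m;A,B}\circ(\Psi^{-1})^*$, i.e. the Laplacian on the disk is the conjugate by $\Psi$ of the Laplacian $\Delta_{n,m;A,B}=\frac4A\mathbb M_1+\frac4B\mathbb M_2$ computed in Theorem 2.2. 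So the plan for (ii) is: substitute the coordinate change $\Omega=i(I_n+W)(I_n-W)^{-1}$, $Z=2i\eta(I_n-W)^{-1}$ (from (4.10)) and its inverse (4.12) into $\mathbb M_1,\mathbb M_2$, use the chain rule to re-express the holomorphic and antiholomorphic derivatives $\partial/\partial\Omega,\partial/\partial Z,\partial/\partial\overline\Omega,\partial/\partial\overline Z$ in terms of $\partial/\partial W,\partial/\partial\eta$ and their conjugates, and check that $\frac4B\mathbb M_2$ transforms into $\frac1B\mathbb S_1$ while $\frac4A\mathbb M_1$ transforms into $\frac1A\mathbb S_2$. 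The key Cayley-transform identities needed are the classical ones $\mathrm{Im}\,\Omega=Y=2(I_n-W)^{-1}(I_n-W\overline W)(I_n-\overline W)^{-1}$ (up to placement of factors) together with the differential relation $d\Omega=2i(I_n-W)^{-1}dW(I_n-W)^{-1}$; these convert the matrix weights $Y,Y^{-1}$ and the toroidal terms $V=\mathrm{Im}\,Z$ appearing in $\mathbb M_1$ into the combinations $(I_n-W\overline W)^{\pm1}$, $\eta\overline W-\overline\eta$, $\overline\eta W-\eta$ etc. appearing in $\mathbb S_2$.

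For part (i), the $G^J_*$-invariance of $\mathbb S_1$ and $\mathbb S_2$ individually (not just of their sum) can be obtained in the same way: since $\mathbb M_1$ and $\mathbb M_2$ are each $G^J$-invariant (last sentence of Theorem 2.2), their $\Psi$-conjugates $\mathbb S_2=\Psi^*\circ\frac14\cdot 4\,\mathbb M_1\circ(\Psi^{-1})^*$-type expressions are $G^J_*$-invariant because $\Psi$ intertwines the two group actions (4.11). Thus once the coordinate computation identifying $\Psi^*\mathbb M_1(\Psi^{-1})^*$ with a scalar multiple of $\mathbb S_2$ and likewise for $\mathbb M_2\leftrightarrow\mathbb S_1$ is done, invariance of each piece is automatic and no separate verification is needed.

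The main obstacle will be the bookkeeping in the coordinate substitution of part (ii): one must carefully differentiate products of the non-commuting matrix-valued functions $(I_n-W)^{-1}$, $(I_n-W\overline W)^{-1}$ and their conjugates, keeping track of transposes and of the $\frac{1+\delta_{ij}}2$ normalization built into the matrix derivative symbols $\partial/\partial W,\partial/\partial\eta$, and then reorganize the resulting eight-or-so trace terms so that they match, term by term, the eight trace terms in $\mathbb S_2$. This is a lengthy but entirely mechanical computation of the same flavour as the proof of Theorem 5.1; I would organize it by first treating the pure $W$-part (which reduces to Maass's classical computation relating $\Delta_{n;A}$ to $\Delta_*$ on $\BH_n$ vs. $\BD_n$), then the pure $\eta$-part (giving $\mathbb S_1$ from $\mathbb M_2$), and finally the mixed cross-terms. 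The detailed calculation is carried out in \cite{YJH11}, to which we refer.

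\vskip 1mm\noindent
{\it Proof.} See Theorem 1.4 in \cite{YJH11}. \hfill $\Box$
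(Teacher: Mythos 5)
Your proposal is correct and follows essentially the same route as the paper: the paper simply defers to Theorem 1.4 of \cite{YJH11}, where the metric and Laplacian on ${\mathbb D}_{n,m}$ are obtained precisely by pulling back the $G^J$-invariant data on ${\mathbb H}_{n,m}$ through the partial Cayley transform and using the compatibility (4.11), exactly as you outline (with the invariance of ${\mathbb S}_1,{\mathbb S}_2$ inherited from that of ${\mathbb M}_2,{\mathbb M}_1$). Only a cosmetic point: the Cayley identity should read $Y=(I_n-{\overline W})^{-1}(I_n-{\overline W}W)(I_n-W)^{-1}$ without the factor $2$, which does not affect your argument since you defer the explicit chain-rule computation anyway.
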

\vskip 2mm\noindent
{\it Proof.} See Theorem 1.4 in \cite{YJH11}.  \hfill $\Box$

\vskip 0.5cm
Itoh, Ochiai and Yang \cite{IOY} proved that the following differential operator on $\Dnm$ defined by
\begin{equation*}
{\mathbb S}_3=\,\det(I_n-{\overline W}W)\,\det\left( \PE {}^{{}^{{}^{{}^\text{\scriptsize $t$}}}}\!\!\!\left(
\PEB\right)\right)
\end{equation*}
\noindent
is invariant under the action (4.8) of $G^J_*$ on $\Dnm$.
Furthermore the authors \cite{IOY} proved that the following matrix-valued differential operator on $\Dnm$ defined by
\begin{equation*}
{\mathbb J}:=\,
{}^{{}^{{}^{{}^\text{\scriptsize $t$}}}}\!\!\!
\left( \PEB\right) (I_n-{\overline W}W) \PE
\end{equation*}
\noindent
and each $(k,l)$-entry ${\mathbb J}_{kl}$ of ${\mathbb J}$ given by
\begin{equation*}
{\mathbb J}_{kl}=\sum_{i,j=1}^n
\,\left( \delta_{ij}-\sum_{r=1}^n {\overline w}_{ir}\,w_{jr}\right)
\,{{\partial^2\ \ \ \ }\over{\partial {\overline \eta}_{ki}\partial
\eta_{lj}} },\quad 1\leq k,l\leq m
\end{equation*}
\noindent
are invariant under the action (4.8) of $G^J_*$ on $\Dnm$.

\vskip 0.3cm
\begin{equation*}
{\mathbb S}_*=\,[{\mathbb S}_1,{\mathbb S}_2]=\,{\mathbb S}_1{\mathbb S}_2-{\mathbb S}_2{\mathbb S}_1
\end{equation*}
\noindent
is an invariant differential operator of degree three on $\Dnm$ and
\begin{equation*}
{\mathbb Q}_{kl}=\,[{\mathbb S}_3,{\mathbb J}_{kl}]=
\,{\mathbb S}_3{\mathbb J}_{kl}-{\mathbb J}_{kl}{\mathbb S}_3,
\quad 1\leq k,l\leq m
\end{equation*}
\noindent
is an invariant differential operator of degree $2n+1$ on $\Dnm$.

\vskip 0.3cm
Indeed
it is very complicated and difficult at this moment to express the
generators of the algebra of all $G^J_{*}$-invariant differential
operators on $\Dnm$ explicitly.

\end{section}

\begin{section}{{\bf A Fundamental Domain for the Siegel-Jacobi Space}}
\setcounter{equation}{0}
Let
$$\mathscr P_n=\big\{ Y\in \BR^{(n,n)}\,|\ Y=\,^tY>0\ \big\}$$
be an open connected cone in $\BR^N$ with $N=n(n+1)/2.$ Then the general linear group $GL(n,\BR)$ acts on $\mathscr P_n$ transitively by
\begin{equation}
g\circ Y:=\,gY\,^tg,\quad g\in GL(n,\BR),\ Y\in \mathscr P_g.
\end{equation}
Thus $\mathscr P_n$ is a symmetric space diffeomorphic to $GL(n,\BR)/O(n).$

\vskip 2mm
The fundamental domain $\mathscr R_n$ for $GL(n,\BZ)\ba \mathscr P_n$ which was found by H. Minokwski \cite{Min} is defined as a subset of $\mathscr P_n$ consisting of
$Y=(y_{ij})\in \mathscr P_n$ satisfying the following conditions (M.1) and (M.2):
\vskip 1mm
(M.1) $aY\,^ta\geq y_{kk}$ for every $a=(a_i)\in\BZ^n$ in which $a_k,\cdots,a_n$ are relatively prime for
\vskip 1mm
$\quad\quad\ $ $k=1,2,\cdots,n.$
\vskip 1mm
(M.2) $y_{k,k+1}\geq 0$ for $k=1,\cdots,n-1.$

\vskip 1mm\noindent
We say that a point of $\mathscr R_n$ is {\it Minkowski reduced}.

\vskip 2mm
Let $\Gamma_n=Sp(n,\BZ)$ be the Siegel modular group of degree $n$. Siegel determined a fundamental domain $\mathscr F_n$ for $\Gamma_n \ba \BH_n.$
We say that $\Om=X+iY\in\BH_n$ with $X,\,Y$ real is {\it Siegel reduced} or {\it S-reduced} if it has the following three properties\,:
\vskip 1mm
(S.1) $\det ({\rm Im}(\gamma\cdot\Om))\leq \det({\rm Im}\,(\Om))\quad$ for all $\gamma\in \G_n$;
\vskip 1mm
(S.2) $Y={\rm Im}(\Om)$ is {\it Minkowski reduced}, that is, $Y\in \mathscr R_n$;
\vskip 1mm
(S.3) $|x_{ij}|\leq {1\over 2}$\ \ for $1\leq i,j\leq n$, where $X=(x_{ij}).$

\vskip 2.1mm
$\mathscr F_n$ is defined as the set of all Siegel reduced points in $\BH_n.$ Using the highest point method, Siegel proved the following (F1)-(F3):
\vskip 1mm
(F1) $\G_n\cdot \mathscr F_n=\BH_n$,\ i.e., $\BH_n= \bigcup_{\gamma\in \G_n} \gamma\!\cdot\!\mathscr F_n$ \,;
\vskip 1mm
(F2) $\mathscr F_n$ is closed in $\BH_n$\,;
\vskip 1mm
(F3) $\mathscr F_n$ is connected and the boundary of $\mathscr F_n$ consists of a finite number of hyperplanes.

\vskip 0.53cm
Let $E_{kj}$ be the $m\times n$ matrix with entry 1 where the $k$-th row and the $j$-the column meet, and all other entries 0. For an element $\Omega\in \BH_n$,
we set for brevity
$$ F_{kj}(\Omega):= E_{kj}\,\Omega,\qquad 1\leq k\leq m,\ 1\leq j\leq n.$$

\vskip 2mm
For each $\Omega\in \mathscr F_n$, we define the subset $P_\Omega$ of $\BC^{(m,n)}$ by

$$P_\Omega=\left\{ \sum_{k=1}^m\sum_{j=1}^n \lambda_{kj}E_{kj} +\sum_{k=1}^m\sum_{j=1}^n \mu_{kj}F_{kj}(\Omega)\,\Big|\ 0\leq \lambda_{kj},\mu_{kj}\leq 1\,\right\}.$$

\vskip 2mm\noindent
For each $\Omega\in \mathscr F_n$, we define the subset $D_\Omega$ of $\BH_{n,m}$ by
$$D_\Omega=\big\{ (\Omega,Z)\in \BH_{n,m}\,|\ Z \in P_\Omega\,\big\}.$$
Let
\begin{equation}
\G_{n,m}=\Gamma_n \ltimes H_\BZ^{(n,m)}
\end{equation}
be the Siegel-Jacobi (or simply Jacobi) modular group of degree $n$ and index $m$.

\vskip 2mm
Yang found a fundamental domain $\mathscr F_{n,m}$ for $\G_{n,m}\ba \BH_{n,m}$ using Siegel's fundamental domain $\mathscr F_n$ in \cite{YJH8}.

\begin{theorem}
 The set
$$ \mathscr F_{n,m}:=\bigcup_{\Omega\in\mathscr F_n} D_\Omega$$
is a fundamental domain for $\Gamma_{n,m}\backslash \BH_{n,m}.$
\end{theorem}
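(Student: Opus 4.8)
The plan is to show two things: first, that every point of $\BH_{n,m}$ is $\Gamma_{n,m}$-equivalent to a point of $\mathscr F_{n,m}$, and second, that the collection $\{D_\Omega\}_{\Omega\in\mathscr F_n}$ is large enough to catch a representative but no larger than necessary, so that the union is genuinely a fundamental domain. The key structural fact I would use is the semidirect product decomposition $\Gamma_{n,m}=\Gamma_n\ltimes H_\BZ^{(n,m)}$ from (6.2), which lets us reduce the problem in two stages, exactly mirroring the action (1.2): an element $\big(M,(\lambda,\mu;\kappa)\big)$ acts on $(\Omega,Z)$ first by the Heisenberg part, which translates $Z$ by $\lambda\Omega+\mu$ (and adjusts by a $\kappa$-dependent term that does not affect $\Omega$), and then by $M\in\Gamma_n$, which acts on $\Omega$ by (1.1) and simultaneously divides $Z$ by $C\Omega+D$.

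First I would prove surjectivity. Given $(\Omega_0,Z_0)\in\BH_{n,m}$, apply Siegel's result (F1) to find $M\in\Gamma_n$ with $M\cdot\Omega_0=\Omega\in\mathscr F_n$; this sends $(\Omega_0,Z_0)$ to $(\Omega,Z_1)$ for some $Z_1\in\BC^{(m,n)}$, where $Z_1=(Z_0+\lambda'\Omega_0+\mu')(C\Omega_0+D)^{-1}$ with $\lambda'=\mu'=0$ here — i.e. simply $Z_1=Z_0(C\Omega_0+D)^{-1}$. Now I must translate $Z_1$ into $P_\Omega$ using the Heisenberg subgroup $H_\BZ^{(n,m)}$, which acts by $Z_1\mapsto Z_1+\lambda\Omega+\mu$ for $\lambda,\mu\in\BZ^{(m,n)}$. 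The point is that the $2mn$ real vectors $\{E_{kj}\}$ and $\{F_{kj}(\Omega)=E_{kj}\Omega\}$ — more precisely their real and imaginary parts — form an $\BR$-basis of $\BC^{(m,n)}\cong\BR^{2mn}$: indeed writing $\Omega=X+iY$ with $Y>0$, the real parts of the $F_{kj}(\Omega)$ contribute an $X$-block and the imaginary parts a $Y$-block, and invertibility of $Y$ makes the whole $2mn\times 2mn$ transition matrix nonsingular. Hence every $Z_1$ can be written uniquely as $\sum\lambda_{kj}E_{kj}+\sum\mu_{kj}F_{kj}(\Omega)$ with real coefficients, and subtracting the integer parts $\lfloor\lambda_{kj}\rfloor,\lfloor\mu_{kj}\rfloor$ via an element of $H_\BZ^{(n,m)}$ brings the representative into the unit box $P_\Omega$. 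This shows $(\Omega,Z)\in D_\Omega\subset\mathscr F_{n,m}$ is $\Gamma_{n,m}$-equivalent to $(\Omega_0,Z_0)$, giving $\Gamma_{n,m}\cdot\mathscr F_{n,m}=\BH_{n,m}$; moreover by (F2) and the fact that each $P_\Omega$ is compact (a continuous image of a cube) and varies continuously with $\Omega$, $\mathscr F_{n,m}$ is closed.

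Next I would address the near-disjointness: two interior points of $\mathscr F_{n,m}$ cannot be $\Gamma_{n,m}$-equivalent, i.e. the identifications on $\mathscr F_{n,m}$ occur only on the boundary. Suppose $\big(M,(\lambda,\mu;\kappa)\big)$ carries $(\Omega,Z)\in D_\Omega$ to $(\Omega',Z')\in D_{\Omega'}$ with $\Omega,\Omega'\in\mathscr F_n$. Projecting to the $\Omega$-coordinate, $M\cdot\Omega=\Omega'$ with both in Siegel's domain $\mathscr F_n$; by Siegel's theory this forces $\Omega,\Omega'$ to lie on the boundary of $\mathscr F_n$ unless $M$ fixes $\Omega$ — and the stabilizer of a generic interior $\Omega$ in $\Gamma_n$ is $\{\pm I_{2n}\}$, which acts trivially on $\Omega$ and by $Z\mapsto -Z$ on the fiber. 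In the generic case we are reduced to comparing $Z$ and $Z'=\pm Z+\lambda\Omega+\mu$ inside the same box $P_\Omega$; by the uniqueness of the coordinate representation established above, an interior point of the cube is sent to another point of the cube by an integer translation only if the translation is zero (and the sign is $+$), so $Z'=Z$. Thus the identifications happen solely on $\partial\mathscr F_{n,m}$, which is a finite union of real-analytic pieces by (F3) together with the faces $\lambda_{kj}\in\{0,1\}$, $\mu_{kj}\in\{0,1\}$ of the boxes; in particular it has measure zero. The main obstacle is the careful bookkeeping at the boundary of $\mathscr F_n$: one must ensure that when $\Omega$ sits on $\partial\mathscr F_n$ and $M\cdot\Omega=\Omega'\in\mathscr F_n$ with $M\neq\pm I$, the resulting identification of fibers $D_\Omega$ and $D_{\Omega'}$ is consistent with the box structure, i.e. that the boundary matchings of Siegel's domain lift to boundary matchings of $\mathscr F_{n,m}$ and do not create spurious interior overlaps; this requires knowing how $P_\Omega$ transforms under $\Omega\mapsto M\cdot\Omega$, which follows from the cocycle identity $(C\Omega+D)$ acting on the right of $Z$ and the fact that $\lambda,\mu$ range over all of $\BZ^{(m,n)}$ so that the lattice $\BZ^{(m,n)}\Omega+\BZ^{(m,n)}$ is $\Gamma_n$-equivariant. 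The remaining verifications — continuity of $\Omega\mapsto P_\Omega$, that $\mathscr F_{n,m}$ is connected (it fibers over the connected $\mathscr F_n$ with connected fibers), and measure-zero boundary — are routine.
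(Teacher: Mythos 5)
The covering half of your argument is correct and is the same reduction used in the reference the paper cites for this theorem: Siegel's property (F1) moves the $\Omega$-coordinate into $\mathscr F_n$, and since $Y=\mathrm{Im}\,\Omega$ is invertible the $2mn$ matrices $E_{kj}$, $F_{kj}(\Omega)=E_{kj}\Omega$ form an $\BR$-basis of $\BC^{(m,n)}$, so a subsequent integral Heisenberg translation $Z\mapsto Z+\lambda\Omega+\mu$ places the fiber coordinate in the box $P_\Omega$; your remarks on closedness and connectedness are routine and fine.

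The disjointness step, however, contains a genuine error. Let $J\in\BZ^{(m,n)}$ be the all-ones matrix and consider $\gamma_0=\big(-I_{2n},(-J,-J;0)\big)$; this lies in $\Gamma_{n,m}$ (note $\kappa+\mu\,{}^t\lambda=J\,{}^tJ$ is symmetric and integral), and by the action (1.2) it sends $(\Omega,Z)$ to $\big(\Omega,\,-Z+J\Omega+J\big)$. Writing $Z=\sum a_{kj}E_{kj}+\sum b_{kj}F_{kj}(\Omega)$, this map replaces every coefficient $a_{kj},b_{kj}$ by $1-a_{kj},1-b_{kj}$, hence carries the \emph{interior} of each fiber box $P_\Omega$ onto itself, fixing only its center. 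Thus for every $\Omega\in\mathscr F_n$ and almost every interior $Z$, the two distinct interior points $(\Omega,Z)$ and $(\Omega,-Z+J\Omega+J)$ of $\mathscr F_{n,m}$ are $\Gamma_{n,m}$-equivalent. This is exactly the case you dismissed: with the minus sign the integral translation $\lambda=\mu=-J$ does bring $-Z$ back into the cube, so your claim that an interior point returns to the cube ``only if the translation is zero (and the sign is $+$)'' is false, and with it your conclusion that identifications occur only on $\partial\mathscr F_{n,m}$. The correct statement of what one can prove for the full box is weaker: $\Gamma_{n,m}\cdot\mathscr F_{n,m}=\BH_{n,m}$, $\mathscr F_{n,m}$ is closed and connected, and only finitely many $\gamma\in\Gamma_{n,m}$ satisfy $\gamma\mathscr F_{n,m}\cap\mathscr F_{n,m}\neq\emptyset$ (using Siegel's finiteness for $\mathscr F_n$ together with the boundedness of the required translations); a domain with pairwise inequivalent interior points would require cutting each fiber box in half by the involution $Z\mapsto -Z$ modulo the lattice $\BZ^{(m,n)}\Omega+\BZ^{(m,n)}$, which comes from $-I_{2n}\in\Gamma_n$ acting trivially on $\BH_n$ but not on the fiber. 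As written, your second half does not establish what you claim.
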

\vskip 1.2mm
\noindent
{\it Proof.} See Theorem 3.1 in \cite{YJH8}. \hfill$\square$

\end{section}

\vskip 1cm

\begin{section}{{\bf Jacobi Forms}}
\setcounter{equation}{0}

Let $\rho$ be a rational representation of
$GL(n,\mathbb{C})$ on a finite
dimensional complex vector space
$V_{\rho}.$ Let ${\mathcal M}\in \mathbb R^{(m,m)}$ be a
symmetric
half-integral semi-positive definite matrix of degree $m$.
The canonical automorphic factor
$$ J_{\rho,\mathcal M}: G^J\times \BH_{n,m}\lrt GL(V_\rho)$$
for $G^J$ on $\BH_{n,m}$ is given as follows\,:
\begin{eqnarray*}
 J_{\rho,\mathcal M}((g,(\lambda,\mu;\kappa)),(\Omega,Z))
&=&e^{2\,\pi\, i\,\sigma\left( {\mathcal M}(Z+\lambda\,
\Om+\mu)(C\Om+D)^{-1}C\,{}^t(Z+\lambda\,\Om\,+\,\mu)\right) }\\
& &\times\, e^{-2\,\pi\, i\,\sigma\left( {\mathcal M}(\lambda\,
\Om\,{}^t\!\lambda\,+\,2\,\lambda\,{}^t\!Z+\,\kappa+
\mu\,{}^t\!\lambda) \right)} \rho(C\,\Om+D),
\end{eqnarray*}
where $g=\left(\begin{matrix} A&B\\ C&D\end{matrix}\right)\in
Sp(n,\mathbb R),\ (\lambda,\mu;\kappa)\in H_{\mathbb R}^{(n,m)}$
and $(\Om,Z)\in \BH_{n,m}.$ We refer to \cite{YJH4} for a geometrical construction of $J_{\rho,\mathcal M}.$

\vskip 0.21cm
Let
$C^{\infty}(\BH_{n ,m},V_{\rho})$ be the algebra of all
$C^{\infty}$ functions on $\BH_{n,m}$
with values in $V_{\rho}.$
For $f\in C^{\infty}(\BH_{n,m}, V_{\rho}),$ we define
\begin{eqnarray}
 \left(f|_{\rho,{\mathcal M}}[(g,(\lambda,\mu;\kappa))]\right)(\Om,Z)
&= & J_{\rho,\mathcal M}((g,(\lambda,\mu;\kappa)),(\Omega,Z))^{-1}\\
& &\ f\left(g\!\cdot\!\Om,(Z+\lambda\,
\Om+\mu)(C\,\Om+D)^{-1}\right),\nonumber
\end{eqnarray}
where $g=\left(\begin{matrix} A&B\\ C&D\end{matrix}\right)\in
Sp(n,\mathbb R),\ (\lambda,\mu;\kappa)\in H_{\mathbb R}^{(n,m)}$
and $(\Om,Z)\in \BH_{n,m}.$

\begin{definition}
Let $\rho$ and $\mathcal M$
be as above. Let
$$H_{\mathbb Z}^{(n,m)}:= \left\{ (\lambda,\mu;\kappa)\in H_{\mathbb R}^{(n,m)}\, \vert
\,\ \lambda,\mu, \kappa\ {\rm integral}\ \right\}$$
be the discrete subgroup of $H_{\mathbb R}^{(n,m)}$.
A $\textsf{Jacobi\ form}$ of index $\mathcal M$
with
respect to $\rho$ on a subgroup $\Gamma$ of $\Gamma_n$ of finite
index is a
holomorphic function $f\in
C^{\infty}(\BH_{n,m},V_{\rho})$ satisfying the following
conditions (A) and (B):

\smallskip

\noindent (A) \,\ $f|_{\rho,{\mathcal M}}[\tilde{\gamma}] = f$ for
all $\tilde{\gamma}\in {\widetilde\Gamma}:= \Gamma \ltimes
H_{\mathbb Z}^{(n,m)}$.

\smallskip

\noindent (B) \,\ For each $M\in\Gamma_n$, $f|_{\rho,\CM}[M]$ has a
Fourier expansion of \\
\indent \ \ \ \ the following form :
$$(f|_{\rho,\CM}[M])(\Om,Z) = \sum\limits_{T=\,{}^tT\ge0\atop \text {half-integral}}
\sum\limits_{R\in \mathbb Z^{(n,m)}} c(T,R)\cdot e^{{ {2\pi
i}\over {\lambda_\G}}\,\sigma(T\Om)}\cdot e^{2\pi i\,\sigma(RZ)}$$

with $\lambda_\G(\neq 0)\in\BZ$ and
$c(T,R)\ne 0$ only if $\left(\begin{matrix} { 1\over {\lambda_\G}}T & \frac 12R\\
\frac 12\,^t\!R&{\mathcal M}\end{matrix}\right) \geq 0$.
\end{definition}

\medskip

\indent If $n\geq 2,$ the condition (B) is superfluous by K{\"
o}cher principle\,(\,cf.\,\cite{Zi} Lemma 1.6). We denote by
$J_{\rho,\mathcal M}(\Gamma)$ the vector space of all Jacobi forms
of index $\mathcal{M}$ with respect to $\rho$ on $\Gamma$.
Ziegler\,(\,cf.\,\cite{Zi} Theorem 1.8 or \cite{EZ} Theorem 1.1\,)
proves that the vector space $J_{\rho,\mathcal {M}}(\Gamma)$ is
finite dimensional. In the special case $\rho(A)=(\det(A))^k$ with
$A\in GL(n,\BC)$ and a fixed $k\in\BZ$, we write $J_{k,\CM}(\G)$
instead of $J_{\rho,\CM}(\G)$ and call $k$ the {\it weight} of the
corresponding Jacobi forms. For more results about Jacobi forms with
$n>1$ and $m>1$, we refer to \cite{YJH0}-\cite{YJH6} and \cite{Zi}. Jacobi forms play an
important role in lifting elliptic cusp forms to Siegel cusp forms of degree $2n$ (cf.\,\cite{Ik, Ik1}).

\vskip 0.21cm\noindent

Now we will make  brief historical remarks on
Jacobi forms. In 1985, the names Jacobi group and Jacobi forms got
kind of standard by the classic book \cite{EZ} by Eichler and
Zagier to remind of Jacobi's ``Fundamenta nova theoriae functionum
ellipticorum'', which appeared in 1829 (cf.\,\cite{J}). Before
\cite{EZ} these objects appeared more or less explicitly and under
different names in the work of many authors. In 1966
Pyatetski-Shapiro \cite{PS} discussed the Fourier-Jacobi expansion
of Siegel modular forms and the field of modular abelian
functions. He gave the dimension of this field in the higher
degree. About the same time Satake \cite{Sa1}-\cite{Sa2}
introduced the notion of ``groups of Harish-Chandra type'' which
are non reductive but still behave well enough so that he could
determine their canonical automorphic factors and kernel
functions. Shimura \cite{Sh1}-\cite{Sh2} gave a new foundation of
the theory of complex multiplication of abelian functions using
Jacobi theta functions. Kuznetsov \cite{Kuz} constructed functions
which are almost Jacobi forms from ordinary elliptic modular
functions. Starting 1981, Berndt \cite{Be1}-\cite{Be3} published
some papers which studied the field of arithmetic Jacobi
functions, ending up with a proof of Shimura reciprocity law for
the field of these functions with arbitrary level. Furthermore he
investigated the discrete series for the Jacobi group $G^J$
and developed the spectral theory for $L^2(\G_{n,m}\backslash
G^J)$ in the case $n=m=1$\,(cf.\,\cite{Be4}-\cite{Be6}). The
connection of Jacobi forms to modular forms was given by Maass,
Andrianov, Kohnen, Shimura, Eichler and Zagier. This connection is
pictured as follows. For $k$ even, we have the following
isomorphisms
\begin{equation}
M_k^*(\G_2)\,\cong\,J_{k,1}(\G_1)\,\cong\,M_{k-\frac12}^+ \left(\G_0^{(1)}(4)\right)\,
\cong\,M_{2k-2}(\G_1).
\end{equation}
Here $M_k^*(\G_2)$ denotes Maass's
Spezialschar or Maass space and $M_{k-\frac12}^+\left(\G_0^{(1)}(4)\right)$
denotes the Kohnen plus space. For a precise detail, we
refer to \cite{M3}-\cite{M5},\,\cite{A1},\,\cite{EZ},\,\cite{Ko1, Ko2} and \cite{Z}. In 1982 Tai \cite{Ta} gave asymptotic dimension
formulae for certain spaces of Jacobi forms for arbitrary $n$ and
$m=1$ and used these ones to show that the moduli ${\mathcal A}_n$
of principally polarized abelian varieties of dimension $n$ is
{\it of\ general\ type} for $n\geq 9.$ Feingold and Frenkel
\cite{FF} essentially discussed Jacobi forms in the context of
Kac-Moody Lie algebras generalizing the Maass correspondence to
higher level. Gritsenko \cite{Gri} studied Fourier-Jacobi
expansions and a non-commutative Hecke ring in connection with the
Jacobi group. After 1985 the theory of Jacobi forms for $n=m=1$
had been studied more or less systematically by the Zagier school.
A large part of the theory of Jacobi forms of higher degree was
investigated by Kramer \cite{Kr1, Kr2}, Runge \cite{Ru}, Yang
\cite{YJH0}-\cite{YJH4}and Ziegler \cite{Zi}. There were several
attempts to establish $L$-functions in the context of the Jacobi
group by Murase \cite{Mu1, Mu2} and Sugano \cite{MS} using
the so-called ``Whittaker-Shintani functions''. Kramer
\cite{Kr1, Kr2} developed an arithmetic theory of Jacobi
forms of higher degree. Runge \cite{Ru} discussed some part of the
geometry of Jacobi forms for arbitrary $n$ and $m=1.$ For a good
survey on some motivation and background for the study of Jacobi
forms, we refer to \cite{Be7}. The theory of Jacobi forms has been
extensively studied by many people until now and has many
applications in other areas like geometry and physics.

\end{section}

\vskip 1cm

\begin{section}{{\bf Singular Jacobi Forms}}
\setcounter{equation}{0}

\begin{definition}
A Jacobi form $f\in J_{\rho,\mathcal {M}}(\Gamma_n)$ is said to be
$\textsf{cuspidal}$
if $\begin{pmatrix} T & {\frac 12}R\\ {\frac 12}\,^t\!R & \mathcal
{M}\end{pmatrix} > 0$ for any $T,\,R$ with $c(T,R)\ne 0.$ A Jacobi
form $f\in J_{\rho,\mathcal{M}}(\Gamma_n)$ is said to be
$\textsf{singular}$
if it admits a Fourier expansion such that a Fourier
coefficient
$c(T,R)$ vanishes unless $\text{det}\begin{pmatrix} T &{\frac 12}R\\ {\frac 12}\,^t\!R & \mathcal
{M}\end{pmatrix}=0.$
\end{definition}

Let $\mathscr P_{n,m}=\mathscr P_n \times \BR^{(m,n)}$ be the Minkowski-Euclid space,
where $\mathscr P_n$ is the open cone consisting of positive symmetric
$n\times n$ real matrices.
For a variable $(Y,V)\in \mathscr P_{n,m}$ with $Y\in \mathscr P_n$ and $V\in \BR^{(m,n)}$,
we put
$$Y=(y_{\mu\nu})\ \text{with}\ y_{\mu\nu}=y_{\nu\mu},\ \
V=(v_{kl}),$$
$$\Yd\,=\left( { {1+\delta_{\mu\nu}}\over 2 } {
{\partial}\over {\partial y_{\mu\nu} } }\right),\ \ \ \Vd\,=\left({
{\partial}\over {\partial v_{kl}} } \right),$$
where $1\leq
\mu,\nu,\,l\leq n$ and $1\leq k\leq m.$

\vskip 0.25cm
We define the following differential operator
\begin{equation}
M_{n,m,\CM}:=\det (Y)\cdot \det \left( \Yd + {1\over {8\,\pi}}\, {}^{{}^{{}^{{}^\text{\scriptsize $t$}}}}\!\!\!\left(\Vd\right) \CM^{-1}\Vd\right).
\end{equation}
In \cite{YJH3}, Yang characterized singular Jacobi forms in the following way:
\begin{theorem}
Let $f\in J_{\rho,\mathcal M}(\G_n)$ be a Jacobi form of index $\mathcal M$ with respect to a rational representation $\rho$ of $GL(n,\BC)$. Then the
following conditions are equivalent:
\vskip 1mm
(Sing-1) $f$ is a singular Jacobi form.
\vskip 1mm
(Sing-2) $f$ satisfies the differential equation $M_{n,m,\mathcal M}f=0.$
\end{theorem}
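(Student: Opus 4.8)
The plan is to exploit the Fourier expansion of $f$ and show that the differential operator $M_{n,m,\CM}$ acts termwise on the exponential $e^{2\pi i\,\sigma(T\Om)}\,e^{2\pi i\,\sigma(RZ)}$ by multiplication by a scalar that is (up to a nonzero constant) $\det\!\begin{pmatrix} T & \frac12 R\\ \frac12\,{}^t\!R & \CM\end{pmatrix}$. Writing $\Om=X+iY$ and $Z=U+iV$ as in the paper, one has $\Yd\,e^{2\pi i\,\sigma(T\Om)}=-2\pi\,T\,e^{2\pi i\,\sigma(T\Om)}$ (after the usual symmetrization convention absorbed into $\Yd$), and $\Vd\,e^{2\pi i\,\sigma(RZ)}=-2\pi\,({}^t\!R)\,e^{2\pi i\,\sigma(RZ)}$, so that
\begin{equation*}
\left(\Yd+\frac1{8\pi}\,{}^{{}^{{}^{{}^\text{\scriptsize $t$}}}}\!\!\!\left(\Vd\right)\CM^{-1}\Vd\right)
e^{2\pi i\,\sigma(T\Om)}e^{2\pi i\,\sigma(RZ)}
=-2\pi\left(T-\tfrac14\,R\,\CM^{-1}\,{}^t\!R\right)e^{2\pi i\,\sigma(T\Om)}e^{2\pi i\,\sigma(RZ)}.
\end{equation*}
First I would check this identity carefully, paying attention to the matrix bookkeeping (the transpose in ${}^{{}^{{}^{{}^\text{\scriptsize $t$}}}}\!\!\!\left(\Vd\right)$ and the fact that $V$ enters $Z=U+iV$ with a factor $i$ so that $\partial/\partial V$ picks up the right sign). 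Then, using the Schur-complement identity
\begin{equation*}
\det\begin{pmatrix} T & \tfrac12 R\\ \tfrac12\,{}^t\!R & \CM\end{pmatrix}=\det(\CM)\cdot\det\!\left(T-\tfrac14\,R\,\CM^{-1}\,{}^t\!R\right),
\end{equation*}
valid since $\CM>0$ (here one uses that a singular Jacobi form forces $\CM$ to be positive definite; if $\CM$ were merely semi-positive the statement is vacuous or requires the reduction in \cite{YJH3}), multiplying by $\det(Y)$ and applying $\det$ to the matrix operator shows $M_{n,m,\CM}$ scales the $(T,R)$-term by $\det(Y)\cdot(-2\pi)^n\det(\CM)^{-1}\det\!\begin{pmatrix} T & \tfrac12 R\\ \tfrac12\,{}^t\!R & \CM\end{pmatrix}$.

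From this the equivalence is immediate in one direction: if $f$ is singular, every nonzero Fourier coefficient $c(T,R)$ has $\det\!\begin{pmatrix} T & \tfrac12 R\\ \tfrac12\,{}^t\!R & \CM\end{pmatrix}=0$, so each term is annihilated and $M_{n,m,\CM}f=0$. For the converse, suppose $M_{n,m,\CM}f=0$; applying the computation above to the Fourier expansion of $f$ (which exists and converges on $\BH_{n,m}$ by Definition 7.1(B), and may be differentiated termwise by standard estimates on its region of convergence), and using the linear independence of the characters $e^{2\pi i\,\sigma(T\Om)}e^{2\pi i\,\sigma(RZ)}$ for distinct $(T,R)$, one concludes $c(T,R)\cdot\det\!\begin{pmatrix} T & \tfrac12 R\\ \tfrac12\,{}^t\!R & \CM\end{pmatrix}=0$ for all $(T,R)$, which is exactly the singularity condition. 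Here one should note that $M_{n,m,\CM}$ is not literally a constant-coefficient operator because of the $\det(Y)$ factor and, after expanding the determinant, cross terms involving products of $\Yd$ and $\Vd$ entries; so I would first establish that on each exponential the determinant-of-matrix-operator really does evaluate to the determinant of the scalar matrix — this is the point where one must be careful that the entries of the matrix $\Yd+\frac1{8\pi}{}^{{}^{{}^{{}^\text{\scriptsize $t$}}}}\!\!\!\left(\Vd\right)\CM^{-1}\Vd$, acting on a single exponential, all commute (they do, since they are all first-order and kill each other's coefficients on an exponential), so that the formal determinant and the evaluated determinant agree.

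The main obstacle is justifying the termwise differentiation and the use of the Köcher-type Fourier expansion uniformly: one needs that $f|_{\rho,\CM}[M]$ for $M=I$ already has the stated Fourier expansion with $c(T,R)$ supported on $\begin{pmatrix} T & \tfrac12 R\\ \tfrac12\,{}^t\!R & \CM\end{pmatrix}\ge0$, that this series and all its derivatives converge absolutely and uniformly on compact subsets of $\BH_{n,m}$, and that consequently $M_{n,m,\CM}f$ is obtained by applying $M_{n,m,\CM}$ term by term. This is standard for Jacobi forms but should be invoked explicitly (cf.\ \cite{Zi}, \cite{EZ}). A secondary subtlety is the semi-positive (rather than positive) definite case for $\CM$: when $\CM$ is degenerate the Schur complement does not make sense, but then one argues directly that the bottom-right block being degenerate already forces $\det\begin{pmatrix} T & \tfrac12 R\\ \tfrac12\,{}^t\!R & \CM\end{pmatrix}=0$ whenever the whole matrix is $\ge 0$, so $f$ is automatically singular, and separately that $M_{n,m,\CM}$ is then not defined (or one restricts, as in \cite{YJH3}, to $\CM>0$); I would state the theorem under the hypothesis $\CM>0$, consistent with the source. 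Modulo these analytic points, the proof is the eigenvalue computation above followed by linear independence of characters.
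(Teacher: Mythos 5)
Your proposal is correct and follows essentially the same route as the paper's source: the paper itself only cites Theorem 4.1 of \cite{YJH3}, whose proof is precisely this termwise eigenvalue computation, showing that $M_{n,m,\mathcal M}$ multiplies the $(T,R)$-th Fourier term by $(-2\pi)^n\det(Y)\det(\mathcal M)^{-1}\det\!\begin{pmatrix} T & \tfrac12 R\\ \tfrac12\,{}^t\!R & \mathcal M\end{pmatrix}$, and then invokes uniqueness of Fourier coefficients. Your normalization bookkeeping (the $\tfrac{1}{8\pi}$ factor, the Schur complement, the commuting constant-coefficient entries) checks out, and the caveats you flag (positive definiteness of $\mathcal M$ so that $\mathcal M^{-1}$ makes sense, and termwise differentiation of the locally uniformly convergent expansion) are exactly the hypotheses under which the cited result is proved.
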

\vskip 2mm\noindent
{\it Proof.} See Theorem 4.1 in \cite{YJH3}. \hfill $\Box$

\vskip2mm
\begin{theorem}
Let $2\mathcal M$ be a symmetric, positive definite, unimodular even matrix of degree $m$. Assume that $\rho$ is irreducible and satisfies the condition
$$ \rho(A)=\rho(-A)\quad {\rm for\ all}\ A\in GL(n,\BC).$$
Then a nonvanishing Jacobi form in $J_{\rho,\mathcal M}(\G_n)$ is singular if and only if $2\,k(\rho)< n+m$.
\end{theorem}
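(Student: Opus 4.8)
The plan is to deduce the theorem from the classification of singular vector-valued Siegel modular forms by means of the theta decomposition, which becomes especially simple since $2\CM$ is unimodular and even. Since $2\CM$ is positive definite, unimodular and even (so $m\equiv 0\pmod 8$), the Jacobi theta series
$$\vth_\CM(\Om,Z)=\sum_{x\in\BZ^{(m,n)}}e^{2\pi i\,\s\left(\CM[x]\,\Om\,+\,2\,{}^t\!x\,\CM\,Z\right)}$$
is a scalar Jacobi form of weight $\det^{m/2}$ and index $\CM$ on $\G_n$ (unimodularity and evenness make its multiplier trivial), and because there is only one theta characteristic the theta expansion of an arbitrary $f\in J_{\rho,\CM}(\G_n)$ collapses to a single term $f=g\cdot\vth_\CM$, where $g$ is a holomorphic function on $\BH_n$, independent of $Z$, transforming as a Siegel modular form of weight $\rho':=\rho\otimes\det^{-m/2}$ for $\G_n$. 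The hypothesis $\rho(A)=\rho(-A)$ is exactly what forces $-I_{2n}\in\G_n$ to act trivially, so that $g$ genuinely does not depend on $Z$ (otherwise $f\equiv 0$), and since $m/2\in 4\BZ$ the twist by $\det^{-m/2}$ is harmless. Thus $g\mapsto g\,\vth_\CM$ is a linear isomorphism of the space of Siegel modular forms of weight $\rho'$ onto $J_{\rho,\CM}(\G_n)$; in particular $f\neq 0$ if and only if $g\neq 0$.

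The next step identifies singularity of $f$ with singularity of $g$. By Theorem 8.1, $f$ is singular if and only if $M_{n,m,\CM}f=0$. The operator $D:=\Yd+\tfrac1{8\pi}\,{}^t\!\left(\Vd\right)\CM^{-1}\Vd$ sitting inside $M_{n,m,\CM}$ annihilates $\vth_\CM$ entrywise — this is the matrix heat equation satisfied by a Jacobi theta series, verified at once by applying $D$ term by term to the series above. Since $g$ is holomorphic and $Z$-independent, all of its $\Vd$-derivatives vanish, so the Leibniz rule applied to the determinant of the (mutually commuting) operator matrix $(D_{ij})$ collapses: one gets $D_{ij}(g\,\vth_\CM)=\bigl((\Yd)_{ij}g\bigr)\vth_\CM$, and iterating yields
$$M_{n,m,\CM}\!\left(g\,\vth_\CM\right)=\det(Y)\cdot\bigl(\det(\Yd)\,g\bigr)\cdot\vth_\CM.$$
Comparing Fourier coefficients (no collisions occur, as $\CM$ is invertible) shows that $M_{n,m,\CM}f=0$ if and only if $\det(\Yd)\,g=0$; since $\det(\Yd)$ multiplies the Fourier coefficient attached to $T$ by a nonzero constant times $\det T$, this says exactly that $g$ is a singular Siegel modular form. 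Hence $f$ is singular if and only if $g$ is singular.

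Finally, I would invoke the classification of singular vector-valued Siegel modular forms (Resnikoff, Freitag, Weissauer): a nonzero Siegel modular form of irreducible weight $\rho'$ with highest weight $(\mu_1\ge\cdots\ge\mu_n)$ is singular if and only if $2\mu_n<n$. Writing the highest weight of $\rho$ as $(\lambda_1\ge\cdots\ge\lambda_n)$ with $k(\rho)=\lambda_n$, the weight $\rho'=\rho\otimes\det^{-m/2}$ has highest weight $(\lambda_1-\tfrac m2,\dots,\lambda_n-\tfrac m2)$, so $g$ is singular if and only if $2\bigl(\lambda_n-\tfrac m2\bigr)<n$, i.e. if and only if $2\,k(\rho)<n+m$. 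Combining this with the two preceding steps (and $f\neq 0\Leftrightarrow g\neq 0$) gives the theorem.

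The main obstacle is the second step: one must carefully establish the identity $M_{n,m,\CM}(g\,\vth_\CM)=\det(Y)\,\det(\Yd)\,g\cdot\vth_\CM$, which rests on the heat equation $D\vth_\CM=0$ (in the correct normalization of $\vth_\CM$) together with the vanishing of all Leibniz cross-terms when $\det(D)$ is applied to $g\,\vth_\CM$ — a consequence of the $Z$-independence of $g$. One must also verify that the theta lift in the first step is genuinely a bijection with holomorphic inverse, and bear in mind that the last step imports a deep classical theorem on singular Siegel modular forms.
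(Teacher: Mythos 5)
The paper itself gives no proof here---it simply cites Theorem 4.5 of \cite{YJH3}---but your argument is correct and follows essentially the same strategy as that reference: since $2\mathcal M$ is even and unimodular the theta decomposition collapses to the single term $f=g\,\vartheta_{\mathcal M}$ with $g$ a Siegel modular form of weight $\rho\otimes{\det}^{-m/2}$, singularity transfers between $f$ and $g$ (most directly via the identity $\det\begin{pmatrix}T&\frac12R\\ \frac12\,{}^t\!R&\mathcal M\end{pmatrix}=\det(\mathcal M)\,\det\bigl(T-\tfrac14R\mathcal M^{-1}\,{}^t\!R\bigr)$ on Fourier coefficients, which makes your detour through the heat operator $M_{n,m,\mathcal M}$ valid but unnecessary), and Weissauer's classification of singular vector-valued Siegel modular forms of small weight finishes the job. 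The one imprecision is your account of the hypothesis $\rho(A)=\rho(-A)$: it is not what makes $g$ independent of $Z$ (that is automatic from the theta expansion), but rather what forces $\rho(-I_n)=\mathrm{id}$, so that the evenness of $f$ in $Z$ implied by $f=g\,\vartheta_{\mathcal M}$ is compatible with $f\neq 0$.
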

\vskip 2mm\noindent
{\it Proof.} See Theorem 4.5 in \cite{YJH3}. \hfill $\Box$

\vskip 0.32cm
\begin{remark}
We let
\begin{equation*}
GL_{n,m}:=\,GL(n,\BR)\ltimes \BR^{(m,n)}
\end{equation*}
be the semidirect product of $GL(n,\BR)$ and $\BR^{(m,n)}$ with multiplication law
\begin{equation*}
(A,a)\cdot (B,b):=\,(AB,a\,{}^tB^{-1}+b),\qquad A,B\in GL(n,\BR),\ \ a,b\in \BR^{(m,n)}.
\end{equation*}
Then we have the {\it natural action} of $GL_{n,m}$ on the Minkowski-Euclid space ${\mathscr P}_{n,m}$
defined by
\begin{equation}
(A,a)\cdot (Y,\zeta):=\,\big(AY\,{}^t\!A,\,(\zeta+a)\,{}^t\!A \big),
\end{equation}
where $(A,a)\in GL_{n,m},\ Y\in {\mathscr P}_n,\ \zeta\in \BR^{(m,n)}.$
Without difficulty we see that the differential operator $M_{n,m,\CM}$ is invariant under the action (8.2) of $GL_{n,m}.$
We refer to \cite{YJH15} for more detail about invariant differential operators on the Minkowski-Euclid space ${\mathscr P}_{n,m}$.
\end{remark}

\end{section}

\vskip 10mm

\begin{section}{{\bf The Siegel-Jacobi Operator}}
\setcounter{equation}{0}
Let $\rho$ be a rational representation of $GL(n,\BC)$ on a finite dimensional vector space $V_\rho$. For a positive integer $r<n$,
we let $\rho^{(r)}:GL(r,\BC)\lrt GL(V_\rho)$ be a rational representation of $GL(r,\BC)$ defined by
$$\rho^{(r)}(a)v:=\rho \left( \begin{pmatrix} a & 0 \\ 0 & iI_{n-r}\end{pmatrix} \right)v,\quad a\in GL(r,\BC),\ v\in V_\rho.$$
The \textsf{Siegel-Jacobi operator} $\Psi_{n,r}:J_{\rho,\CM}(\Gamma_n)\lrt J_{\rho^{(r)},\CM}(\Gamma_n)$ is defined by
$$\left( \Psi_{n,r}f \right)(\Om,Z):=\lim_{t\lrt \infty} f\left( \begin{pmatrix} \Om & 0 \\ 0 & i\,tI_{n-r}\end{pmatrix},(Z,0)\right),$$
where $f\in J_{\rho,\CM}(\Gamma_n),\ \Om\in \BH_r$ and $Z\in \BC^{(m,r)}.$

\vskip 2mm
In \cite{YJH0}, Yang investigated the injectivity, surjectivity and bijectivity of the Siegel-Jacobi operator.
\begin{theorem}
Let $2\mathcal M$ be a symmetric, positive definite, unimodular even matrix of degree $m$. Assume that $\rho$ is irreducible and satisfies the condition
$$ \rho(A)=\rho(-A)\quad {\rm for\ all}\ A\in GL(n,\BC).$$
If $2\,k(\rho)< n+{\rm rank} (\mathcal M)$, then the Siegel-Jacobi operator $\Psi_{n,n-1}$ is injective.
\end{theorem}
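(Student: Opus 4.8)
The plan is to argue by contradiction: suppose $f\in J_{\rho,\CM}(\G_n)$ satisfies $\Psi_{n,n-1}f=0$ but $f\neq 0$, and show every Fourier coefficient of $f$ vanishes. For a half-integral symmetric $T\ge 0$ of degree $n$ and $R\in\BZ^{(n,m)}$ put $\mathcal T(T,R)=\left(\begin{smallmatrix}T&\frac12 R\\ \frac12\,{}^tR&\CM\end{smallmatrix}\right)$; condition (B) for Jacobi forms says $c(T,R)\neq 0$ only if $\mathcal T(T,R)\ge 0$, and — since $\CM>0$ gives $\mathrm{rank}(\CM)=m$, so the present hypotheses are exactly those of Theorem~8.3 — the nonvanishing form $f$ is \emph{singular}: $c(T,R)\neq 0$ forces $\det\mathcal T(T,R)=0$. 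Now I would read off the meaning of $\Psi_{n,n-1}f=0$. Substituting $\Om\mapsto\left(\begin{smallmatrix}\Om'&0\\0&itI_1\end{smallmatrix}\right)$ and $Z\mapsto(Z',0)$ in the Fourier series and using $\sigma\!\left(T\left(\begin{smallmatrix}\Om'&0\\0&itI_1\end{smallmatrix}\right)\right)=\sigma(T_1\Om')+it\,t_{nn}$, where $t_{nn}\ge 0$ is the lower right entry of $T$ and $T_1$ its upper left $(n-1)\times(n-1)$ block, each term picks up a factor $e^{-2\pi t\,t_{nn}}$. Letting $t\to\infty$ (the Fourier series converges locally uniformly, so limit and sum interchange) only the terms with $t_{nn}=0$ survive; for these $\mathcal T(T,R)\ge 0$ has a vanishing diagonal entry, hence its entire last block row and column vanish, so $T=\left(\begin{smallmatrix}T_1&0\\0&0\end{smallmatrix}\right)$ and $R=\left(\begin{smallmatrix}R_1\\0\end{smallmatrix}\right)$. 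Therefore $\Psi_{n,n-1}f=0$ forces $c(T,R)=0$ for every pair with $R=0$ and $T$ block-diagonal with a nontrivial zero block (the case of a zero block of size $\ge 2$ following from the size-$1$ case by padding with zero rows and columns).

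The heart of the argument is then to show that every $(T,R)$ with $c(T,R)\neq 0$ is equivalent — under the action of $\widetilde\G_n=\G_n\ltimes H_\BZ^{(n,m)}$ on Fourier data — to such a pair. The subgroup $\left\{\left(\begin{smallmatrix}{}^tU^{-1}&0\\0&U\end{smallmatrix}\right):U\in GL(n,\BZ)\right\}$ of $\G_n$ acts by $c(T,R)\mapsto\rho(U)\,c(U T\,{}^tU,\,UR)$, and $H_\BZ^{(n,m)}$ acts by a nonzero scalar together with $\mathcal T(T,R)\mapsto g\,\mathcal T(T,R)\,{}^tg$ for $g=\left(\begin{smallmatrix}I_n&\nu\\0&I_m\end{smallmatrix}\right)$, $\nu\in\BZ^{(n,m)}$; here one checks (using that $2\CM$ is \emph{even}) that the new $T$-block is again half-integral and that the new $R$-block is $R+2\nu\CM$. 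Since $2\CM$ is \emph{unimodular}, $(2\CM)^{-1}$ is integral, so $\nu=-R(2\CM)^{-1}\in\BZ^{(n,m)}$ turns the $R$-block into $0$ and the $T$-block into the Schur complement $T^{\ast}:=T-\frac14 R\CM^{-1}{}^tR$. By singularity $\det\mathcal T(T,R)=0$, and as $\det\CM\neq 0$ this gives $\det T^{\ast}=0$; thus $T^{\ast}\ge 0$ is half-integral of degree $n$ and rank $<n$. Its kernel is rational and meets $\BZ^n$ in a saturated sublattice of rank $\ge 1$, which extends to a basis of $\BZ^n$; the resulting $U\in GL(n,\BZ)$ satisfies $U T^{\ast}\,{}^tU=\left(\begin{smallmatrix}T_1&0\\0&0\end{smallmatrix}\right)$ with a nontrivial zero block. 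Hence $(T,R)$ is equivalent to $\big(\left(\begin{smallmatrix}T_1&0\\0&0\end{smallmatrix}\right),0\big)$; by the previous paragraph this coefficient is $0$, and since all the equivalences are implemented by invertible operators, $c(T,R)=0$. As $(T,R)$ was arbitrary, $f=0$ — contradiction.

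I expect the main obstacle to be the bookkeeping in the second paragraph: writing out precisely the transformation law of $c(T,R)$ under a Heisenberg element $(\la,\mu;\kappa)\in H_\BZ^{(n,m)}$ from the automorphic factor $J_{\rho,\CM}$; checking that the induced congruence on $\mathcal T(T,R)$ preserves half-integrality of the $T$-block, which is exactly where evenness of $2\CM$ is used and is essential; and confirming that unimodularity of $2\CM$ makes $\nu=-R(2\CM)^{-1}$ integral, so the Schur complement is actually attained inside the $\widetilde\G_n$-orbit. Getting the scalars and operators in these transformation laws right — so that ``$c\neq 0$'' is a genuine invariant of the orbit — is the delicate point; the limit computation and the appeal to Theorem~8.3 are routine.
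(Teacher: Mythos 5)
The paper itself contains no proof of this theorem --- it only cites Theorem 3.5 of \cite{YJH0} --- so I can only judge your argument on its own terms and against what the cited reference can plausibly contain. Your proof is correct, and every ingredient checks out: since $\mathcal M>0$ gives ${\rm rank}(\mathcal M)=m$, Theorem 8.3 does force a nonzero $f$ to be singular; in the limit $t\to\infty$ only the terms with $t_{nn}=0$ survive, and positive semidefiniteness of $\mathcal T(T,R)$ then kills the entire $n$-th row and column, so $\Psi_{n,n-1}f=0$ is exactly the vanishing of all coefficients $c\bigl(\left(\begin{smallmatrix}T_1&0\\0&0\end{smallmatrix}\right),\left(\begin{smallmatrix}R_1\\0\end{smallmatrix}\right)\bigr)$; the Heisenberg shift with $\nu=-R(2\mathcal M)^{-1}$ is integral by unimodularity, keeps the $T$-block half-integral by evenness, and replaces $(T,R)$ by the Schur complement $(T-\tfrac14R\mathcal M^{-1}\,{}^tR,\,0)$, which is degenerate because $\det\mathcal T(T,R)=0$ and $\det\mathcal M\neq0$; and a $GL(n,\BZ)$-conjugation moving a primitive integral kernel vector into the last slot finishes the reduction. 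Two remarks on the comparison. First, your route cannot literally be the one in \cite{YJH0}: that paper (1993) predates the singular-form criterion you invoke (Theorem 8.3 is Theorem 4.5 of \cite{YJH3}, 1995); the argument available there runs instead through the theta decomposition of $f$ --- which the unimodularity of $2\mathcal M$ trivializes --- reducing injectivity of $\Psi_{n,n-1}$ to injectivity of the classical Siegel $\Phi$-operator on singular vector-valued Siegel modular forms of the corresponding small weight. Your version buys a self-contained manipulation of Jacobi Fourier coefficients at the price of quoting the later singularity theorem; there is no circularity, since Theorem 8.3 does not depend on the present statement. Second, a small point to write out properly: ``locally uniform convergence'' by itself does not justify the termwise limit $t\to\infty$, since that limit leaves every compact set; what does the job is the domination $|c(T,R)|e^{-2\pi t\,t_{nn}}\le|c(T,R)|e^{-2\pi t_{nn}}$ for $t\ge1$ (using $t_{nn}\ge0$), together with absolute convergence at $t=1$.
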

\vskip 2mm\noindent
{\it Proof.} See Theorem 3.5 in \cite{YJH0}. \hfill $\Box$

\begin{theorem}
Let $2\mathcal M$ be a symmetric, positive definite, unimodular even matrix of degree $m$. Assume that $\rho$ is irreducible and satisfies the condition
$$ \rho(A)=\rho(-A)\quad {\rm for\ all}\ A\in GL(n,\BC).$$
If $2\,k(\rho)+1 < n+{\rm rank} (\mathcal M)$, then the Siegel-Jacobi operator $\Psi_{n,n-1}$ is an isomorphism.
\end{theorem}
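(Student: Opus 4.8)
\textit{Proof proposal.}
I would treat the two halves of ``isomorphism'' separately. Injectivity is immediate: the hypothesis $2\,k(\rho)+1<n+\operatorname{rank}(\mathcal M)$ implies $2\,k(\rho)<n+\operatorname{rank}(\mathcal M)$, so Theorem 9.1 already gives that $\Psi_{n,n-1}\colon J_{\rho,\mathcal M}(\Gamma_n)\lrt J_{\rho^{(n-1)},\mathcal M}(\Gamma_{n-1})$ is injective. Thus the real content of the statement, and the reason for the extra ``$+1$'', is surjectivity.

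The strategy for surjectivity is to reduce everything to \emph{singular} Jacobi forms. Since $2\mathcal M$ is unimodular, $\operatorname{rank}(\mathcal M)=m$ and the hypothesis reads $2\,k(\rho)<(n-1)+m$. In particular $2\,k(\rho)<n+m$, so by Theorem 8.2 every nonzero $f\in J_{\rho,\mathcal M}(\Gamma_n)$ is singular, equivalently $M_{n,m,\mathcal M}f=0$ by Theorem 8.1. On the other hand, decomposing $\rho^{(n-1)}$ into $GL(n-1,\BC)$-irreducibles and using the branching rule for the block embedding $GL(n-1,\BC)\hookrightarrow GL(n,\BC)$, every irreducible constituent $\rho_j$ of $\rho^{(n-1)}$ satisfies $k(\rho_j)\le k(\rho)$, hence $2\,k(\rho_j)<(n-1)+m$; so — once the parity and unimodularity hypotheses of Theorem 8.2 are checked to descend to those constituents $\rho_j$ for which $J_{\rho_j,\mathcal M}(\Gamma_{n-1})\ne 0$ — Theorem 8.2 applied in degree $n-1$ shows that $J_{\rho^{(n-1)},\mathcal M}(\Gamma_{n-1})$ too consists entirely of singular forms. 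This is precisely where the strengthened inequality is used: it is what forces the \emph{target} space, and not only the source, to be singular.

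With both spaces singular, I would finish using the explicit realization of singular Jacobi forms as finite linear combinations of Jacobi theta series attached to even integral data and pluriharmonic polynomial coefficients of degree $k(\rho)$ (the differential-operator characterization of Theorem 8.1 being its analytic counterpart). Given $g\in J_{\rho^{(n-1)},\mathcal M}(\Gamma_{n-1})$, write $g=\sum_i c_i\,\vartheta^{(n-1)}_{L_i,P_i}$ and, for each term, form the degree-$n$ theta series $\vartheta^{(n)}_{L_i,\widetilde P_i}$ obtained by keeping the lattice $L_i$ and its pairing (extended trivially in the new column) and extending $P_i$ to a pluriharmonic polynomial $\widetilde P_i$ on the larger space; the rank-versus-degree relation for theta series forces the lift again to have weight $k$ and to transform under $\rho$. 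Evaluating $\Psi_{n,n-1}$ amounts to replacing $(\Omega,Z)$ by $\big(\operatorname{diag}(\Omega,it),(Z,0)\big)$ and letting $t\to\infty$: the lattice vectors with nonzero last column acquire a factor $e^{-ct}$ with $c>0$ and drop out, while the surviving ones reassemble into $\vartheta^{(n-1)}_{L_i,P_i}$. Hence $\Psi_{n,n-1}\big(\sum_i c_i\,\vartheta^{(n)}_{L_i,\widetilde P_i}\big)=g$, so $\Psi_{n,n-1}$ is onto, and together with injectivity it is an isomorphism.

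I expect the main obstacle to lie in the third paragraph: setting up the singular-Jacobi-form structure theorem in this vector-valued generality and then checking that the polynomial lift $P_i\mapsto\widetilde P_i$ is compatible with the pair of representations $(\rho^{(n-1)},\rho)$, and that the surviving terms genuinely recombine — with the correct automorphy factor — into $g$. The decay of the ``nondegenerate'' terms is routine from positive definiteness of $\mathcal M$; the representation-theoretic matching, together with the bookkeeping around the parity hypothesis for the constituents of $\rho^{(n-1)}$, is the delicate part, and it is there that the irreducibility of $\rho$ and the condition $\rho(A)=\rho(-A)$ are used essentially. (An alternative ending — combining Theorem 9.1 with an equality $\dim J_{\rho,\mathcal M}(\Gamma_n)=\dim J_{\rho^{(n-1)},\mathcal M}(\Gamma_{n-1})$ valid in the singular range — comes down to the same theta-series analysis needed to compute those two dimensions.)
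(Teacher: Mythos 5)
You are right that injectivity is immediate from Theorem 9.1, but both halves of your surjectivity argument have genuine gaps. The first is the inequality $k(\rho_j)\le k(\rho)$ for the irreducible constituents $\rho_j$ of $\rho^{(n-1)}$, which is what you use to force the target space to consist of singular forms. The invariant $k(\cdot)$ for which Theorem 8.2 can be an ``if and only if'' criterion is not the largest entry of the highest weight of $\rho$ but the smallest one: via the theta decomposition (which uses that $2\mathcal M$ is even unimodular, so there is a single theta component), singularity of $f\in J_{\rho,\mathcal M}(\Gamma_n)$ amounts to singularity of an associated vector-valued Siegel modular form of type $\rho\otimes{\det}^{-m/2}$, and by the Freitag--Resnikoff--Weissauer theory that is governed by $\lambda_n-m/2$, i.e.\ by the last entry $\lambda_n$ of the highest weight; theta series with pluriharmonic coefficients produce singular forms with $\lambda_1$ arbitrarily large, so a two-sided criterion $2k(\rho)<n+m$ forces $k(\rho)=\lambda_n$. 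But for the restriction $GL(n-1,\BC)\subset GL(n,\BC)$ the highest weights interlace, $\lambda_j\ge\mu_j\ge\lambda_{j+1}$, so a constituent has smallest entry $\mu_{n-1}\ge\lambda_n=k(\rho)$ --- the opposite of what you assert. Hence ``$2k(\rho)<(n-1)+m$ implies the target is singular'' does not follow from Theorem 8.2 the way you apply it (and Theorem 8.2 also requires irreducibility and $\rho_j(A)=\rho_j(-A)$ for the constituent, which you only promise to check). This is not bookkeeping: whether the target contains non-singular pieces is exactly what surjectivity is about.

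Second, even granting singularity of both spaces, your lifting argument rests on a structure theorem --- every vector-valued singular Jacobi form of degree $n-1$ is a finite combination of theta series with pluriharmonic coefficients, and these lift to degree $n$ compatibly with the pair $(\rho^{(n-1)},\rho)$ --- which is neither contained in the paper (Theorems 8.1--8.2 give only the differential-operator and weight characterizations) nor supplied by you; you explicitly defer it, yet it is the entire content of the surjectivity half. Notice also that you never concretely use the two hypotheses that do the real work, namely that $2\mathcal M$ is even unimodular and that $\rho(A)=\rho(-A)$: these are precisely what identify $J_{\rho,\mathcal M}(\Gamma_n)$ with a space of vector-valued Siegel modular forms of type $\rho\otimes{\det}^{-m/2}$ on the full group $\Gamma_n$, after which the statement reduces to the known results of Freitag and Weissauer on the Siegel operator in the singular weight range; that reduction is the route of the source the paper cites ([YJH0], Theorem 3.6), the paper itself giving no proof beyond the citation. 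As written, your proposal is a reasonable outline, but it contains a wrong-direction inequality at the reduction step and is missing the central lemma needed for surjectivity.
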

\vskip 2mm\noindent
{\it Proof.} See Theorem 3.6 in \cite{YJH0}. \hfill $\Box$

\begin{theorem}
Let $2\mathcal M$ be a symmetric, positive definite, unimodular even matrix of degree $m$.
Assume that $2\,k(\rho)> 4\,n+{\rm rank}(\mathcal M)$ and $k\equiv 0\ ({\rm mod}\ 2).$
Then the Siegel-Jacobi operator $\Psi_{n,n-1}$ is an isomorphism.
\end{theorem}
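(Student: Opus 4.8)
{\it Sketch of a proof.} The plan is to deduce the statement from a Klingen--type Eisenstein series construction, reducing the main analysis to the classical Siegel case by exploiting that $2\CM$ is unimodular even. First I would determine $\ker\Psi_{n,n-1}$. Inserting the Fourier expansion of $f\in J_{\rho,\CM}(\Gamma_n)$ into $\Psi_{n,n-1}f(\Omega_1,Z_1)=\lim_{t\to\infty}f\!\left(\!\left(\begin{smallmatrix}\Omega_1&0\\0&it\end{smallmatrix}\right)\!,(Z_1,0)\right)$, only the terms with $T=\left(\begin{smallmatrix}T_1&0\\0&0\end{smallmatrix}\right)$ survive the limit, and the semi-positivity of $\left(\begin{smallmatrix}T&R/2\\{}^tR/2&\CM\end{smallmatrix}\right)$ then forces the surviving $R$ to be of the form $\left(\begin{smallmatrix}R_1\\0\end{smallmatrix}\right)$; combining this with the $GL_n(\BZ)$--equivariance of the Fourier coefficients (which follows from the modular transformation law in the definition of a Jacobi form) shows $\ker\Psi_{n,n-1}=J^{\mathrm{cusp}}_{\rho,\CM}(\Gamma_n)$. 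So the theorem reduces to producing, for every $g\in J_{\rho^{(n-1)},\CM}(\Gamma_{n-1})$, a preimage under $\Psi_{n,n-1}$ depending linearly on $g$, i.e. a $\BC$--linear section, together with the identification of the kernel just made.

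For the section I would use the Klingen--Jacobi--Eisenstein series. Pull $g$ back to a function $g^{\sharp}$ on $\BH_{n,m}$ along the embedding $(\Omega_1,Z_1)\mapsto\!\left(\!\left(\begin{smallmatrix}\Omega_1&0\\0&iI_1\end{smallmatrix}\right)\!,(Z_1,0)\right)$ --- the block $iI_1$ is precisely what turns the restriction of a $J_{\rho,\CM}$--form into a $J_{\rho^{(n-1)},\CM}$--form --- and put $E_{n,n-1}(g):=\sum_{\gamma}g^{\sharp}|_{\rho,\CM}[\gamma]$, the sum running over representatives for $\Gamma^{\infty}_{n,m}\backslash\Gamma_{n,m}$, where $\Gamma^{\infty}_{n,m}$ is the stabilizer in $\Gamma_{n,m}$ of the codimension--one boundary component. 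The steps would be: (i) absolute, locally uniform convergence via a Hecke--K{\"o}cher majorant --- the $Sp$--part of the sum converges in the Klingen range $k(\rho)>2n$, while the additional Heisenberg cosets $H_{\BZ}^{(n,m)}$ (of ``rank'' $\mathrm{rank}(\CM)$) are exactly what sharpens the bound to $2k(\rho)>4n+\mathrm{rank}(\CM)$; (ii) $E_{n,n-1}(g)\in J_{\rho,\CM}(\Gamma_n)$, the modular invariance being formal, holomorphy coming from uniform convergence, and the cusp/growth condition being automatic by the K{\"o}cher principle since $n\ge 2$; and (iii) the identity $\Psi_{n,n-1}\big(E_{n,n-1}(g)\big)=g$, obtained by taking the boundary limit coset by coset --- only the cosets in the ``big cell'' have nonzero boundary value, each equal to $g$. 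An essentially equivalent route, available because $2\CM$ unimodular even has trivial discriminant form, is the theta decomposition $f=h\cdot\vartheta^{(n)}_{\CM}$ with $h$ a Siegel modular form of weight $\rho\otimes(\det)^{-\mathrm{rank}(\CM)/2}$ and $\vartheta^{(n)}_{\CM}$ the unique degree--$n$ theta series of index $\CM$: since $\Psi_{n,n-1}\big(h\,\vartheta^{(n)}_{\CM}\big)=\Phi(h)\,\vartheta^{(n-1)}_{\CM}$, where $\Phi$ is the Siegel operator, the claim is transported to the surjectivity of $\Phi$ in weight $\rho\otimes(\det)^{-\mathrm{rank}(\CM)/2}$ --- for which $2k(\rho)-\mathrm{rank}(\CM)>4n$, i.e. the weight exceeds $2n$ --- which is Klingen's theorem.

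The genuinely hard points are (i) and (iii). The K{\"o}cher--type majorant for $E_{n,n-1}(g)$ is more delicate than in the Siegel case, since one must estimate the $Sp(n,\BZ)$--cosets and the Heisenberg cosets simultaneously, together with the Gaussian factors built into $J_{\rho,\CM}((g,(\lambda,\mu;\kappa)),(\Omega,Z))$, and it is the interaction of these that yields the exponent $4n+\mathrm{rank}(\CM)$; the cleanest way to organize the estimate is again to factor through the theta decomposition and borrow the classical bound. Establishing $\Psi_{n,n-1}\circ E_{n,n-1}=\mathrm{id}$ requires tracking the exponential Heisenberg factors of $J_{\rho,\CM}$ through the limit $t\to\infty$ and checking that every non--big--cell coset produces Fourier support with a strictly positive $(n,n)$--entry of $T$, hence no boundary contribution; the hypothesis $k\equiv 0\pmod 2$ enters at this point (it also ensures $E_{n,n-1}(g)\not\equiv 0$, since otherwise a parity obstruction from $-I_{2n}$ and the center kills the relevant constant terms). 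Once surjectivity and $\ker\Psi_{n,n-1}=J^{\mathrm{cusp}}_{\rho,\CM}(\Gamma_n)$ are in hand, $\Psi_{n,n-1}$ carries the Klingen--Eisenstein subspace $E_{n,n-1}\big(J_{\rho^{(n-1)},\CM}(\Gamma_{n-1})\big)$ isomorphically onto the target, which yields the assertion.
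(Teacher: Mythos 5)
The paper itself contains no argument for this statement: its ``proof'' is the citation of Theorem 3.7 of \cite{YJH0}, so only the routes can be compared. Given the hypothesis that $2\mathcal M$ is unimodular and even, the natural (and presumably intended) argument is exactly your second route: the theta decomposition $f=h\,\vartheta^{(n)}_{\mathcal M}$ identifies $J_{k,\mathcal M}(\Gamma_n)$ with Siegel modular forms of weight $k-\tfrac m2$, intertwines $\Psi_{n,n-1}$ with the Siegel operator $\Phi$, and the numerical hypotheses $2k>4n+\mathrm{rank}(\mathcal M)$ and $k\equiv 0 \pmod 2$ translate precisely into Klingen's range $k-\tfrac m2>2n$, $k-\tfrac m2$ even, for the surjectivity of $\Phi$. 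Your direct Klingen--Jacobi--Eisenstein construction is a reasonable alternative way to produce a section of $\Psi_{n,n-1}$, and your computation of the boundary limit of the Fourier expansion (only terms with $t_{nn}=0$, hence vanishing last rows of $T$ and $R$, survive) is correct.

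The genuine gap is in the final logical step. Even granting all the deferred analytic points, what your argument delivers is surjectivity of $\Psi_{n,n-1}$ together with a description of a complement of its kernel; it does not prove that $\Psi_{n,n-1}$ is an isomorphism, which is what the statement asserts. By your own kernel analysis $\ker\Psi_{n,n-1}\supseteq J^{\mathrm{cusp}}_{\rho,\mathcal M}(\Gamma_n)$, and under the theta decomposition this kernel corresponds to the space of Siegel cusp forms of weight $k-\tfrac m2$, which is nonzero for all sufficiently large $k$ in the stated range; so injectivity of the full operator fails there and cannot follow from your construction, nor is it addressed anywhere in the sketch. Your closing sentence silently replaces ``$\Psi_{n,n-1}$ is an isomorphism'' by ``$\Psi_{n,n-1}$ restricted to the Klingen--Eisenstein subspace is an isomorphism onto the target,'' which is a strictly weaker claim; to prove the statement as quoted you would have to show the kernel vanishes (it does not), or else explicitly reconcile what you prove (surjectivity plus a splitting off of the cusp forms) with the assertion. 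A secondary, repairable, inaccuracy: the $GL_n(\BZ)$-equivariance of Fourier coefficients only yields $c(T,R)=0$ whenever $T$ is singular, whereas cuspidality demands vanishing whenever the full matrix $\begin{pmatrix} T & \tfrac 12 R\\ \tfrac 12\,{}^t\!R & \mathcal M\end{pmatrix}$ is singular, a strictly stronger condition when $T>0$; under the present unimodularity hypothesis one can close this by passing through the theta decomposition, where $\ker\Psi_{n,n-1}$ corresponds to $\ker\Phi$, i.e.\ to Siegel cusp forms, and these correspond to Jacobi cusp forms, but as written that step of your kernel identification is incomplete.
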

\vskip 2mm\noindent
{\it Proof.} See Theorem 3.7 in \cite{YJH0}. \hfill $\Box$

\vskip 3.5mm
Now we review the action of the Hecke operators on Jacobi forms. For a positive integer $\ell$, we define
$$O_n(\ell):=\big\{ M\in\BZ^{(2n,2n)}\ |\ ^tMJ_nM=\ell J_n\,\big).$$
Then $O_n(\ell)$ is decomposed into finitely many double cosets {\it mod} $\G_n$, that is,
$$O_n(\ell)=\bigcup_{j=1}^s \G_n g_j \G_n \qquad ({\rm disjoint \ union}).$$
We define
$$T(\ell):=\sum_{j=1}^s \G_n g_j \G_n \in \mathscr H^{(n)},\qquad {\rm the\ Hecke\ algebra}.$$
Let $M\in O_n(\ell).$ For a Jacobi form $f\in J_{\rho,\mathcal M}(\G_n)$, we define
\begin{equation}
f|_{\rho,\CM}(\G_n M\G_n):= \ell^{n k(\rho)-{{n(n+1)}\over 2}}\sum_{j=1}^s f|_{\rho,\CM}[(M_j,(0,0;0)))],
\end{equation}
where $\G_n M\G_n=\bigcup_{j=1}^s \G_n M_j$ (finite disjoint union) and $k(\rho)$ denotes the weight of $\rho$.
We see easily that if $M\in O_n(\ell)$ and $f\in J_{\rho,\mathcal M}(\G_n)$, then
$$f|_{\rho,\CM}(\G_n M\G_n)\in J_{\rho,\ell\mathcal M}(\G_n).$$
For a prime $p$, we define
$$O_{n,p}:=\bigcup_{l=0}^\infty O_n(p^l).$$
Let $\check{\mathscr L}_{n,p}$ be the $\BC$-module generated by all left cosets $\G_nM,\ M\in O_{n,p}$ and $\check{\mathscr H}_{n,p}$ the $\BC$-module generated by
all double cosets $\G_n M\G_n,\ M\in O_{n,p}.$ Then $\check{\mathscr H}_{n,p}$ is a commutative associative algebra. We associate to a double coset
$$ \G_n M\G_n=\bigcup_{i=1}^s \G_n M_i,\qquad M,M_i\in O_{n,p}\quad ({\rm disjoint\ union})$$
the element
$$ j(\G_n M\G_n)=\sum_{i=1}^s \G_n M_i \in \check{\mathscr L}_{n,p}.$$
We extend $j$ linearly to the Hecke algebra $\check{\mathscr H}_{n,p}$ and then we have a monomorphism $j:\check{\mathscr H}_{n,p}\lrt \check{\mathscr L}_{n,p}.$
We now define a bilinear mapping
$$ \check{\mathscr H}_{n,p} \times \check{\mathscr L}_{n,p} \lrt \check{\mathscr L}_{n,p}$$
by
$$ (\G_n M\G_n)\cdot (\G_n M_0)= \sum_{i=1}^s \G_n M_iM_0,\qquad {\rm where}\ \G_n M\G_n=\bigcup_{i=1}^s \G_n M_i.$$
This mapping is well defined because the definition does not depend on the choice of representatives.

\vskip 1.2mm
Let $f\in J_{\rho,\mathcal M}(\G_n)$ be a Jacobi form. For a left coset $L:=\G_n N$ with $N\in O_{n,p}$, we put
\begin{equation}
f|L:= f|_{\rho,\CM} [(N,(0,0;0))].
\end{equation}
We extend this operator (9.2) linearly to $\check{\mathscr L}_{n,p}.$ If $T\in \check{\mathscr H}_{n,p},$ we write
$$ f|T:=f|j(T).$$
Obviously we have
$$  (f|T)L= f|(TL),\qquad f\in J_{\rho,\mathcal M}(\G_n).$$
In a left coset $\G_nM,\ M\in O_{n,p},$ we can choose a representative $M$ of the form
\begin{equation*}
M=\begin{pmatrix} A & B \\ 0 & D \end{pmatrix},\quad ^t\!AD=p^{k_0}I_n,\ ^tBD=\,^tDB,
\end{equation*}
\begin{equation*}
A=\begin{pmatrix} a & {}^t\alpha  \\ 0 & A^* \end{pmatrix},\qquad B=\begin{pmatrix} b & {}^t\beta_1  \\ \beta_2 & B^* \end{pmatrix},\qquad
\D=\begin{pmatrix} d & 0  \\ \delta & D^* \end{pmatrix},
\end{equation*}
where $\alpha, \beta_1,\beta_2,\delta\in \BZ^{n-1}.$ Then we have
\begin{equation*}
M^*=\begin{pmatrix} A^* & B^* \\ 0 & D^* \end{pmatrix}\in O_{n-1,p}.
\end{equation*}
For an integer $r\in \BZ$, we define
\begin{equation*}
(\G_n M)^*:= {1\over {d^r}}\G_{n-1}M^*.
\end{equation*}
If $\G_n M\G_n=\bigcup_{j=1}^s \G_n M_j$ (disjoint union), $M,M_j\in O_{n,p},$ then we define in a natural way
\begin{equation}
(\G_n M\G_n)^*:= {1\over {d^r}}\,\sum_{j=1}^s \G_{n-1}M_j^*.
\end{equation}
We extend the above map (9.3) linearly on $\check{\mathscr H}_{n,p}$ and then we have an algebra homomorphism
\begin{equation}
\check{\mathscr H}_{n,p}\lrt \check{\mathscr H}_{n-1,p},\qquad T \longmapsto T^*.
\end{equation}
It is known that the above map (9.4) is a surjective map (\cite{Zh} Theorem 2).

\vskip 2mm
Let $\Psi_{n,r}^0:J_{\rho,\mathcal M}(\G_n) \lrt J_{\rho_0^{(r)},\mathcal M}(\G_r)$ be the {\it modified Siegel-Jacobi operator} defined by
\begin{equation*}
\left( \Psi_{n,r}^0 f \right)(\Om,Z):=\lim_{t\lrt \infty} f\left( \begin{pmatrix} it I_{n-r} & 0 \\ 0 & \Omega \end{pmatrix},(0,Z)\right),\quad (\Om,Z)\in\BH_{r,m},
\end{equation*}
where $\rho_0^{(r)}: GL(r,\BC) \lrt GL(V_\rho)$ is a finite dimensional representation of $GL(r,\BC)$ defined by
\begin{equation*}
\rho_0^{(r)}(A)=\begin{pmatrix} I_{n-r} & 0 \\ 0 & A \end{pmatrix},\quad A\in GL(r,\BC).
\end{equation*}

\vskip 2mm
In \cite{YJH0}, Yang proved that the action of the Hecke operators is compatible with that of the Siegel-Jacobi operator:
\begin{theorem}
Suppose we have
\vskip 2mm
(a) a rational finite dimensional representation
$$ \rho : GL(n,\BC) \lrt GL(V_\rho),$$
\vskip 1mm
(b) a rational finite dimensional representation
$$ \rho_0 : GL(n-1,\BC) \lrt GL(V_{\rho_0}),$$
\vskip 1mm
(c) a linear map $R:V_\rho \lrt V_{\rho_0},$
\vskip 2mm\noindent
satisfying the following properties (1) and (2):
\vskip 2mm
(1) $R\circ \rho \begin{pmatrix} 1 & 0 \\ 0 & A \end{pmatrix} =\rho_0 (A) \circ R$ \quad for\ all\ $A\in GL(n-1,\BC),$
\vskip 2mm
(2) $R\circ \rho \begin{pmatrix} a & 0 \\ 0 & I_{n-1} \end{pmatrix} =a^k R$ \ \ for\ some\ $k\in \BZ.$
\vskip 2mm\noindent
Then for any $f\in J_{\rho,\mathcal M}(\G_n)$ and $T\in \check{\mathscr H}_{n,p}$, we have
\begin{equation*}
\big( R\circ \Psi_{n,n-1}^0\big)(f|T)=R ( \Psi_{n,n-1}^0 f)|T^*.
\end{equation*}
\end{theorem}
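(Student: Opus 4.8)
\medskip
\noindent\textit{Sketch of the proof.} Both sides of the asserted identity are $\BC$-linear in $T$, so the plan is first to reduce to the case of a single double coset $T=\Gamma_n M\Gamma_n=\bigcup_{j=1}^{s}\Gamma_n M_j$ (disjoint union), with each $M_j$ chosen in the upper-triangular normal form recalled above:
$$M_j=\begin{pmatrix}A_j&B_j\\0&D_j\end{pmatrix},\qquad A_j=\begin{pmatrix}a_j&{}^t\alpha_j\\0&A_j^{*}\end{pmatrix},\qquad D_j=\begin{pmatrix}d_j&0\\\delta_j&D_j^{*}\end{pmatrix},$$
with ${}^tA_jD_j=p^{k_0}I_n$ and $M_j^{*}:=\bigl(\begin{smallmatrix}A_j^{*}&B_j^{*}\\0&D_j^{*}\end{smallmatrix}\bigr)\in O_{n-1,p}$. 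Then $f|T=\sum_j f|_{\rho,\CM}[(M_j,0)]$, and---taking the integer $r$ in the construction of $T\mapsto T^{*}$ to be the integer $k$ of hypothesis (2)---one has $T^{*}=\sum_j d_j^{-k}\,\Gamma_{n-1}M_j^{*}$; so it suffices to analyse $\Psi_{n,n-1}^0$ applied to each slash and then sum.

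Since $M_j$ has vanishing lower-left block and trivial Heisenberg part, the canonical automorphic factor collapses to the constant $\rho(D_j)$, whence $(f|_{\rho,\CM}[(M_j,0)])(\Omega',Z')=\rho(D_j)^{-1}f\bigl((A_j\Omega'+B_j)D_j^{-1},\,Z'D_j^{-1}\bigr)$. Substituting $\Omega'=\bigl(\begin{smallmatrix}it&0\\0&\Omega\end{smallmatrix}\bigr)$ and $Z'=(0,Z)$ and inserting the Fourier expansion $f(\Xi,W)=\sum_{S\ge0,R}c(S,R)\,e^{2\pi i\,\sigma(S\Xi)}e^{2\pi i\,\sigma(RW)}$ of the Jacobi form $f$, the block shape of $M_j$ forces the $(1,1)$-entry of $\Xi=(A_j\Omega'+B_j)D_j^{-1}$ to have imaginary part tending to $+\infty$, its lower-right $(n-1)\times(n-1)$ block to tend to $M_j^{*}\!\cdot\!\Omega$, and the last $n-1$ columns of $W=Z'D_j^{-1}$ to equal $Z(D_j^{*})^{-1}$; moreover the positive-semidefiniteness of $\bigl(\begin{smallmatrix}S&R/2\\{}^tR/2&\CM\end{smallmatrix}\bigr)$ makes every term surviving the limit $t\to\infty$ have vanishing first row and column of $(S,R)$, and these terms reassemble precisely into $\Psi_{n,n-1}^0f$ evaluated at $(M_j^{*}\!\cdot\!\Omega,\,Z(D_j^{*})^{-1})$. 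Hence
$$\Psi_{n,n-1}^0\!\bigl(f|_{\rho,\CM}[(M_j,0)]\bigr)(\Omega,Z)=\rho(D_j)^{-1}\bigl(\Psi_{n,n-1}^0 f\bigr)\!\bigl(M_j^{*}\!\cdot\!\Omega,\,Z(D_j^{*})^{-1}\bigr),$$
and the same collapse applied to $M_j^{*}$ gives $\bigl(R(\Psi_{n,n-1}^0 f)\bigr)\big|_{\rho_0,\CM}[(M_j^{*},0)](\Omega,Z)=\rho_0(D_j^{*})^{-1}R\bigl((\Psi_{n,n-1}^0 f)(M_j^{*}\!\cdot\!\Omega,Z(D_j^{*})^{-1})\bigr)$; here $R(\Psi_{n,n-1}^0 f)$ is a genuine Jacobi form for $\rho_0$ because $\rho_0^{(n-1)}(A)=\rho\bigl(\begin{smallmatrix}1&0\\0&A\end{smallmatrix}\bigr)$ and, by hypothesis (1), $R$ intertwines $\rho_0^{(n-1)}$ with $\rho_0$.

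The remaining task is to compare $R\circ\rho(D_j)^{-1}$ with $d_j^{-k}\rho_0(D_j^{*})^{-1}\circ R$. From the factorization $D_j^{-1}=\bigl(\begin{smallmatrix}1&0\\0&(D_j^{*})^{-1}\end{smallmatrix}\bigr)\bigl(\begin{smallmatrix}d_j^{-1}&0\\0&I_{n-1}\end{smallmatrix}\bigr)\bigl(\begin{smallmatrix}1&0\\-\delta_jd_j^{-1}&I_{n-1}\end{smallmatrix}\bigr)$ and pushing $R$ through, hypothesis (1) turns the first factor into $\rho_0((D_j^{*})^{-1})\circ R$ and hypothesis (2) turns the second into $d_j^{-k}R$, giving $R\circ\rho(D_j)^{-1}=d_j^{-k}\rho_0(D_j^{*})^{-1}\circ R\circ\rho\bigl(\begin{smallmatrix}1&0\\-\delta_jd_j^{-1}&I_{n-1}\end{smallmatrix}\bigr)$. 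For a scalar weight $\rho=\det^{k}$ the trailing lower-unipotent factor is the identity and the per-left-coset identity follows immediately, which is Zharkovskaya's theorem in the Jacobi setting; for general $\rho$ the unipotent twists do not vanish term by term, and the decisive point is that, summed over $j$, they cancel---this is exactly what makes the linear map $T\mapsto T^{*}$ well defined (cf.\ \cite{Zh}), and I would establish it by grouping the $M_j$ along the fibers of $M_j\mapsto\Gamma_{n-1}M_j^{*}$ and using the $\Gamma_{n-1}$-invariance of $\Psi_{n,n-1}^0 f$. Granting this, summing the per-coset identities and extending $\BC$-linearly in $T$ finishes the proof. I expect this last cancellation to be the main obstacle: the scalar case is essentially Zharkovskaya's classical argument, whereas the vector-valued refinement genuinely needs hypotheses (1)--(2) on $R$ together with the combinatorics of the left-coset decomposition. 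A subsidiary, routine point is justifying the interchange of $\lim_{t\to\infty}$ with the Fourier sum, which follows from absolute, locally uniform convergence of the Fourier expansion of a Jacobi form and the exponential decay of the discarded terms.
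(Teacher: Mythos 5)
Your overall strategy is the right one (and is essentially the Zharkovskaya-type argument that the paper delegates to Theorem 4.2 of \cite{YJH0}): reduce to one double coset with upper block-triangular representatives $M_j$, observe that the automorphic factor degenerates to $\rho(D_j)$, compute the limit through the Fourier expansion, and then push $R$ through $\rho(D_j)^{-1}$ using the factorization of $D_j^{-1}$ and hypotheses (1)--(2). But the step you yourself flag as ``the main obstacle'' is a genuine gap, and the mechanism you propose for it is not the right one. After applying (1) and (2) you are left with the operator $R\circ\rho\bigl(\begin{smallmatrix}1&0\\-\delta_jd_j^{-1}&I_{n-1}\end{smallmatrix}\bigr)$ acting on the vector $v_j=(\Psi_{n,n-1}^0f)(M_j^*\!\cdot\!\Om,\,Z(D_j^*)^{-1})$, and you assert that the discrepancies cancel only after summing over $j$, to be shown by grouping the $M_j$ along the fibers of $M_j\mapsto\G_{n-1}M_j^*$. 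The $\delta_j$ within such a fiber need not conspire to cancel anything, and the well-definedness of $T\mapsto T^*$ is about independence of representatives, not about this sum; as written, the argument does not close.

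The missing idea is that no cancellation over $j$ is needed because the identity already holds \emph{per left coset}: the values of $\Psi_{n,n-1}^0f$ lie in the subspace of $V_\rho$ fixed by $\rho\bigl(\begin{smallmatrix}1&0\\u&I_{n-1}\end{smallmatrix}\bigr)$ for every column vector $u$. To see this, apply condition (A) of the definition of a Jacobi form to $\gamma=\bigl(\begin{smallmatrix}{}^tU^{-1}&0\\0&U\end{smallmatrix}\bigr)\in\G_n$ with $U=\bigl(\begin{smallmatrix}1&0\\u&I_{n-1}\end{smallmatrix}\bigr)$, $u\in\BZ^{n-1}$: evaluating at $\Om'=\bigl(\begin{smallmatrix}it&0\\0&\Om\end{smallmatrix}\bigr)$, $Z'=(0,Z)$, the point ${}^tU^{-1}\Om'U^{-1}$ has the same lower-right block $\Om$, a $(1,1)$-entry with imaginary part tending to $\infty$, and only bounded off-diagonal perturbations, so exactly the same Fourier-expansion argument you use for the $M_j$ shows that the $t\to\infty$ limit is unchanged; hence $(\Psi^0_{n,n-1}f)(\Om,Z)=\rho(U)(\Psi^0_{n,n-1}f)(\Om,Z)$ for all integral $u$, and by Zariski density (rationality of $\rho$) for all $u\in\BC^{n-1}$. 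In particular $\rho\bigl(\begin{smallmatrix}1&0\\-\delta_jd_j^{-1}&I_{n-1}\end{smallmatrix}\bigr)v_j=v_j$, so $R\rho(D_j^{-1})v_j=d_j^{-k}\rho_0((D_j^*)^{-1})Rv_j$ term by term, and summing over $j$ gives the theorem. This pointwise invariance of the image of the (modified) Siegel--Jacobi operator under the lower unipotent radical is the decisive input that your sketch lacks; once it is inserted, the rest of your outline (including the routine interchange of limit and Fourier sum) goes through.
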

\vskip 2mm\noindent
{\it Proof.} See Theorem 4.2 in \cite{YJH0}. \hfill $\Box$

\begin{remark}
Freitag \cite{Fr1} introduced the concept of stable modular forms using the Siegel operator and developed the theory of stable modular forms.
We can define the concept of stable Jacobi forms using the Siegel-Jacobi operator and develop the theory of stable Jacobi forms.
\end{remark}

\end{section}

\vskip 5mm

\begin{section}{{\bf Construction of Vector-Valued Modular Forms from Jacobi Forms}}
\setcounter{equation}{0}
Let $n$ and $m$ be two positive integers and let $\mathcal P_{m,n}:=\BC[z_{11},\cdots,z_{mn}]$ be the ring of complex valued polynomials on
$\BC^{(m,n)}.$ For any homogeneous polynomial $P\in \mathcal P_{m,n}$, we put
$$ P(\partial_Z):=P\left( { {\partial\ \ }\over {\partial z_{11}} },\cdots, { {\partial\ \ }\over {\partial z_{11}} } \right).$$
Let $S$ be a positive definite symmetric rational matrix of degree $m$. Let $T:=(t_{pq})$ be the inverse of $S$. For each $i,j$ with $1\leq i,j\leq n,$
we denote by $\Delta_{i,j}$ the following differential operator
\begin{equation*}
\Delta_{i,j}:=\sum_{p,q=1}^m t_{pq}\,{{\partial^2\ \ \ }\over {\partial z_{pi}\partial z_{qj}} },\qquad 1\leq i,j\leq n.
\end{equation*}
A polynomial $P$ on $\BC^{(m,n)}$ is said to be {\it harmonic} with respect to $S$ if
\begin{equation*}
\sum_{i=1}^n \Delta_{i,i}P=0.
\end{equation*}
A polynomial $P$ on $\BC^{(m,n)}$ is said to be {\it pluriharmonic} with respect to $S$ if
\begin{equation*}
\Delta_{i,j}P=0,\qquad 1\leq i,j\leq n.
\end{equation*}
If there is no confusion, we just write harmonic or pluriharmonic instead of  harmonic or pluriharmonic with respect to $S$. Obviously a pluriharmonic polynomial is harmonic.
We denote by $\mathscr H_{m,n}$ the space of all pluriharmonic polynomials on $\BC^{(m,n)}$. The ring $\mathcal P_{m,n}$ has a symmetric nondegenerate bilinear form
$\langle P,Q \rangle:=\big( P(\partial_Z)Q\big)(0)$ for $P,Q\in \mathcal P_{m,n}.$ It is easy to check that $\langle\ ,\ \rangle$ satisfies
\begin{equation*}
\langle P,QR \rangle = \langle Q(\partial_Z)P, R \rangle,\qquad P,Q,R\in \mathcal P_{m,n}.
\end{equation*}

\begin{lemma}
$\mathscr H_{m,n}$ is invariant under the action of $GL(n,\BC)\times O(S)$ given by
\begin{equation}
\big( (A,B),P(Z) \big) \longmapsto P(\,{}^tBZA),\qquad A\in GL(n,\BC),\ B\in O(S), \ P\in \mathscr H_{m,n}.
\end{equation}
Here $O(S):=\big\{ B\in GL(m,\BC)\ |\ {}^tBSB=S\,\big\}$ denotes the orthogonal group of the quadratic form $S$.
\end{lemma}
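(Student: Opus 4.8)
The plan is to verify directly that the two conditions defining pluriharmonicity are preserved by the substitution $P(Z)\mapsto P(\,{}^tBZA)$, treating the two factors $A\in GL(n,\BC)$ and $B\in O(S)$ separately. The key observation is that the operators $\Delta_{i,j}$ arise from the $S^{-1}$-contraction of a pair of column-index slots of $Z$, so I would first record how the chain rule transforms them. Writing $W={}^tBZA$, i.e. $w_{pi}=\sum_{q,k} b_{qp}\,z_{qk}\,a_{ki}$, one computes
\[
   {{\partial\ }\over{\partial z_{pi}}}\bigl(P({}^tBZA)\bigr)
   = \sum_{q,k} b_{pq}\,a_{ik}\,(\partial_{w_{qk}}P)({}^tBZA),
\]
so that, upon forming $\Delta_{i,j}=\sum_{p,q}t_{pq}\,\partial_{z_{pi}}\partial_{z_{qj}}$ applied to $P({}^tBZA)$, the $A$-part contributes the factor $\sum_{k,l}a_{ik}a_{jl}$ and the $B$-part together with $T=S^{-1}$ contributes $\sum_{p,q,r,s}t_{pq}b_{pr}b_{qs}$. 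The second sum is exactly the $(r,s)$-entry of ${}^tB\,S^{-1}B$, and since $B\in O(S)$ means ${}^tBSB=S$, i.e. $B^{-1}S^{-1}\,{}^tB^{-1}=S^{-1}$, hence ${}^tB\,S^{-1}B=S^{-1}=T$; therefore that sum collapses to $t_{rs}$.

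**Putting this together,** the computation shows
\[
   \Delta_{i,j}\bigl(P({}^tBZA)\bigr)
   = \sum_{k,l}a_{ik}\,a_{jl}\,\bigl(\Delta_{k,l}P\bigr)({}^tBZA).
\]
From this identity both invariance statements follow immediately. If $P$ is pluriharmonic, every $\Delta_{k,l}P$ vanishes identically, hence so does every $\Delta_{i,j}(P({}^tBZA))$, and $P({}^tBZA)$ is again pluriharmonic. (As a by-product, summing over $i=j$ and using that $AA^{\ast}$-type contractions do not in general simplify, one still sees harmonicity is preserved only when $A$ is suitably restricted — but for pluriharmonicity no restriction on $A\in GL(n,\BC)$ is needed, which is the whole point of distinguishing the two notions.) Finally I would note that the action $(A,B)\cdot P := P({}^tBZA)$ is indeed a (left or right, depending on convention) group action of $GL(n,\BC)\times O(S)$ on $\mathcal P_{m,n}$, which is a routine check from the functoriality of matrix multiplication, so that $\mathscr H_{m,n}$ being stable under each individual substitution gives stability under the whole group.

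**The only place requiring care** — and the step I expect to be the genuine content rather than bookkeeping — is the identity ${}^tB\,S^{-1}B = S^{-1}$ for $B\in O(S)$; everything else is the chain rule. I would spell out that step explicitly: from ${}^tBSB=S$ one gets $B = S^{-1}\,{}^tB^{-1}S$, whence $S^{-1}=B\,S^{-1}\,{}^tB$ after rearrangement, and transposing (using $S=\,{}^tS$) yields $S^{-1}={}^tB\,S^{-1}B$ as needed for the $t_{pq}b_{pr}b_{qs}$ sum to reproduce $t_{rs}$. With that in hand the proof is essentially three lines of chain rule plus this linear-algebra identity, and no deeper input (such as the harmonic decomposition of $\mathcal P_{m,n}$) is required.
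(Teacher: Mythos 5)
Your chain-rule computation is correct, and you have correctly isolated the crux: everything reduces to whether $\sum_{p,q}t_{pq}\,b_{pr}\,b_{qs}=t_{rs}$, i.e.\ whether ${}^tB\,S^{-1}B=S^{-1}$. But the step you yourself flag as the genuine content is wrong. From ${}^tBSB=S$ one does get, by inverting, $B\,S^{-1}\,{}^tB=S^{-1}$; transposing this identity, however, returns it unchanged, since both sides are symmetric (${}^t\bigl(B\,S^{-1}\,{}^tB\bigr)=B\,S^{-1}\,{}^tB$) --- it does \emph{not} give ${}^tB\,S^{-1}B=S^{-1}$. The identity ${}^tB\,S^{-1}B=S^{-1}$ is equivalent to ${}^tB\in O(S)$, and $O(S)$ is not stable under transposition unless $S$ is a scalar matrix. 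Concretely, take $m=2$, $S=\operatorname{diag}(1,2)$ and $B=\begin{pmatrix}\cos\theta & -\sqrt{2}\sin\theta\\ \tfrac{1}{\sqrt{2}}\sin\theta & \cos\theta\end{pmatrix}$; then ${}^tBSB=S$, but $({}^tB\,S^{-1}B)_{11}=\cos^2\theta+\tfrac14\sin^2\theta\neq 1$ for $\sin\theta\neq0$. Correspondingly, with $n=1$ the pluriharmonic polynomial $P(z_1,z_2)=z_1^2-2z_2^2$ satisfies $\Delta_{1,1}\bigl(P({}^tBZ)\bigr)=-\tfrac{15}{2}\sin^2\theta\neq0$, so the gap cannot be closed within the conventions you used: the contraction of the Hessian of $P$ in the row indices vanishes only against $S^{-1}$, not against the different matrix ${}^tB\,S^{-1}B$.

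What your computation does prove, using the identity that genuinely follows from ${}^tBSB=S$, namely $B\,S^{-1}\,{}^tB=S^{-1}$, is invariance of $\mathscr H_{m,n}$ under the substitution $P(Z)\mapsto P(BZA)$ for $B\in O(S)$ (equivalently under $P({}^tBZA)$ for $B$ satisfying $BS\,{}^tB=S$). That is the compatible convention, and it is the form in which the result holds in the cited source; note that the paper itself gives no proof of this lemma but only refers to Mumford (Tata Lectures on Theta III, Cor.\ 9.11), so your direct chain-rule route is the natural one and, apart from this transpose slip, is essentially the standard argument. To repair your write-up you should either switch the action to $P(BZA)$ (or the orthogonality condition to $BS\,{}^tB=S$) and carry the identity $B\,S^{-1}\,{}^tB=S^{-1}$ through the computation, or point out explicitly that the lemma as printed mixes two conventions and is literally false without that adjustment.
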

\vskip 2mm\noindent
{\it Proof.} See Corollary 9.11 in \cite{Mf3}. \hfill $\square$

\begin{remark}
In \cite{KV}, Kashiwara and Vergne investigated an irreducible decomposition of the space of complex pluriharmonic polynomials defined on $\BC^{(m,n)}$ under the action (10.1).
They showed that each irreducible component $\tau\otimes \lambda$ occurring in the decomposition of $\mathscr H_{m,n}$ under the action (10.1) has multiplicity one and the
irreducible representation $\tau$ of $GL(n,\BC)$ is determined uniquely by the irreducible representation of $O(S).$
\end{remark}

\vskip 3mm
Throughout this section we fix a rational representation $\rho$ of $GL(n,\BC)$ on a finite dimensional complex vector space $V_\rho$ and a positive definite symmetric,
half integral matrix $\mathcal M$ of degree $m$ once and for all.

\begin{definition}
A holomorphic function $f:\BH_n\lrt V_\rho$ is called a {\it modular form of type} $\rho$ on $\G_n$ if
$$  f(M\cdot\Omega)=f\big( (A\Om+B)(C\Om+D)^{-1}\big)=\rho (C\Omega+D) f(\Omega),\quad \Om\in\BH_n$$
for all $M=\begin{pmatrix} A & B \\ C & D \end{pmatrix}\in \Gamma_n.$ If $n=1,$ the additional cuspidal condition will be added. We denote by $[\G_n,\rho]$
the vector space of all modular forms of type $\rho$ on $\G_n$.
\end{definition}

\vskip 2mm
Let $\mathscr H_{m,n;\CM}$ be the vector space of of all pluriharmonic polynomials on $\BC^{(m,n)}$ with respect to $S:=(2\,\!\CM)^{-1}.$
According to Lemma 10.1, there exists an irreducible subspace $V_\tau(\neq 0)$ invariant under the action of $GL(n,\BC)$ given by (10.1).
We denote this representation by $\tau$. Then we have
\begin{equation*}
\big( \tau(A)P \big)(Z)=P(ZA),\qquad A\in GL(n,\BC),\ P\in V_\tau,\ Z\in \BC^{(m,n)}.
\end{equation*}
The action $\widehat\tau$ of $GL(n,\BC)$ on $V_\tau^*$ is defined by
\begin{equation*}
\big(  \widehat\tau(A)^{-1}\zeta\big)(P):= \zeta \big( \tau (\,{}^tA^{-1})P\big),
\end{equation*}
where $A\in GL(n,\BC),\ \zeta\in V_\tau^*$ and $P\in V_\tau.$

\begin{definition}
Let $f\in J_{\rho,\mathcal M}(\G_n)$ be a Jacobi form of index $\CM$ with respect to $\rho$ on $\G_n$. Let $P\in V_\tau$ be a homogeneous pluriharmonic polynomial.
We put
\begin{equation*}
f_P (\Omega):=P(\partial_Z) f(\Om,Z)|_{Z=0},\qquad \Om\in \BH_n, \ Z\in \BC^{(m,n)}.
\end{equation*}
Now we define the mapping
$$  f_\tau: \BH_n \lrt V_\tau^* \otimes V_\rho $$
by
\begin{equation}
\big( f_\tau (\Omega) \big)(P):=  f_P (\Omega),\qquad \Om\in \BH_n, \ P\in V_\tau.
\end{equation}
\end{definition}

Yang proved the following theorem in \cite{YJH4}.
\begin{theorem}
Let $\tau$ and $\widehat\tau$ be as before. Let $f\in J_{\rho,\mathcal M}(\G_n)$ be a Jacobi form of index $\CM$ with respect to $\rho$ on $\G_n$.
Then $f_\tau (\Omega)$ is a modular form of type $\widehat\tau \otimes \rho$, i.e., $f_\tau \in [\G_n, \widehat\tau \otimes \rho].$
\end{theorem}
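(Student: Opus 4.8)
\vskip 2mm\noindent
\textbf{Plan of proof.}
The idea is to read off the transformation law of $f_\tau$ under $\G_n$ from the functional equation that $f$ satisfies under the elements $(M,(0,0;0))$ of $\widetilde\Gamma_n:=\G_n\ltimes H_{\BZ}^{(n,m)}$, after applying $P(\partial_Z)$ and setting $Z=0$. First I would specialize condition (A) of the definition of a Jacobi form to $\widetilde\gamma=(M,(0,0;0))$ with $M=\left(\begin{matrix}A&B\\ C&D\end{matrix}\right)\in\G_n$; since $\la=\mu=\ka=0$ the canonical automorphic factor collapses to $J_{\rho,\CM}((M,(0,0;0)),(\Om,Z))=e^{2\pi i\,\sigma(\CM Z(C\Om+D)^{-1}C\,{}^tZ)}\rho(C\Om+D)$, so (A) reads
$$f\big(M\!\cdot\!\Om,\,Z(C\Om+D)^{-1}\big)=e^{2\pi i\,\sigma(\CM Z(C\Om+D)^{-1}C\,{}^tZ)}\,\rho(C\Om+D)\,f(\Om,Z).$$
Writing $N:=C\Om+D$ and substituting $Z=WN$ with $W\in\BC^{(m,n)}$ a fresh variable, and observing that $B:=C\,{}^tN=C\Om\,{}^tC+C\,{}^tD$ is symmetric (by ${}^t\Om=\Om$ and the symplectic relation $C\,{}^tD=D\,{}^tC$), this becomes
$$f(M\!\cdot\!\Om,W)=e^{2\pi i\,\sigma(\CM WB\,{}^tW)}\,\rho(N)\,f(\Om,WN).$$

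Next I would apply $P(\partial_W)$ (the constant-coefficient operator obtained from $P$ by replacing $z_{kl}$ with $\partial/\partial w_{kl}$) to both sides and evaluate at $W=0$, where $P$ ranges over the chosen irreducible pluriharmonic subspace $V_\tau$. The left-hand side is $f_P(M\!\cdot\!\Om)$. The crux — and what I expect to be the main obstacle — is a \emph{Gaussian cancellation}: for every polynomial $g$ on $\BC^{(m,n)}$ and every symmetric $B$,
$$P(\partial_W)\big[\,e^{2\pi i\,\sigma(\CM WB\,{}^tW)}\,g(W)\,\big]\big|_{W=0}=\big(P(\partial_W)g\big)\big|_{W=0}.$$
To prove it I would set $Q(W):=2\pi i\,\sigma(\CM WB\,{}^tW)$, expand $e^{Q}g=\sum_{k\ge0}\tfrac1{k!}Q^k g$, note that $P(\partial_W)(\cdot)|_{W=0}$ only detects the degree-$(\deg P)$ homogeneous part, and use the identity $\langle P',QR\rangle=\langle Q(\partial_W)P',R\rangle$ recalled above to reduce the claim to showing $Q(\partial_W)P=0$. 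But a direct expansion gives $Q(\partial_W)=\pi i\sum_{i,j=1}^n B_{ij}\,\Delta_{i,j}$, with $\Delta_{i,j}$ the operator attached to $S$, precisely because $S=(2\CM)^{-1}$, i.e. $(t_{pq})=S^{-1}=2\CM$; hence $Q(\partial_W)P=\pi i\sum_{i,j}B_{ij}\Delta_{i,j}P=0$ by pluriharmonicity of $P$. Applying the cancellation with $g(W)=f(\Om,WN)$ and pulling out the constant matrix $\rho(N)$ yields $f_P(M\!\cdot\!\Om)=\rho(N)\,\big(P(\partial_W)f(\Om,WN)\big)\big|_{W=0}$.

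Finally I would identify the right-hand side with a value of $f_\tau$. Let $Q_\Om$ denote the degree-$(\deg P)$ homogeneous part of $Z\mapsto f(\Om,Z)$; since $W\mapsto WN$ is linear, the degree-$(\deg P)$ part of $W\mapsto f(\Om,WN)$ is $Q_\Om(WN)=(\tau(N)Q_\Om)(W)$, so the surviving term equals $\langle P,\tau(N)Q_\Om\rangle$. The pairing satisfies $\langle P,\tau(N)Q_\Om\rangle=\langle\tau({}^tN)P,Q_\Om\rangle$ (checked on monomials), and $\tau({}^tN)P\in V_\tau$ since $V_\tau$ is $\tau(GL(n,\BC))$-invariant; hence $f_P(M\!\cdot\!\Om)=\rho(N)\,\langle\tau({}^tN)P,Q_\Om\rangle=\rho(N)\,f_{\tau({}^tN)P}(\Om)$. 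Rewriting this for $f_\tau\in V_\tau^*\otimes V_\rho$ via the definition of $\widehat\tau$ gives
$$\big(f_\tau(M\!\cdot\!\Om)\big)(P)=\big((\widehat\tau\otimes\rho)(C\Om+D)\,f_\tau(\Om)\big)(P)\qquad\text{for all }P\in V_\tau,$$
that is, $f_\tau(M\!\cdot\!\Om)=(\widehat\tau\otimes\rho)(C\Om+D)\,f_\tau(\Om)$, the defining transformation law of a modular form of type $\widehat\tau\otimes\rho$. Holomorphy of $f_\tau$ is immediate from that of $f$, and when $n=1$ the additional cuspidal condition is read off from the Fourier expansion in condition (B): the semi-definiteness constraint forces the $T=0$ Fourier coefficients to be supported on $R=0$, whose contribution dies under $P(\partial_Z)$. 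Apart from this last remark, the only genuine work is the Gaussian-cancellation identity, which is exactly where the hypotheses that $P$ is pluriharmonic and $S=(2\CM)^{-1}$ enter; everything else is the chain rule together with bookkeeping in the representations $\tau$, $\widehat\tau$ and $\rho$.
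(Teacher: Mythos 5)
Your argument is correct and is essentially the proof that the paper delegates to \cite{YJH4}: specialize the transformation law (A) to $(M,(0,0;0))$, substitute $Z=W(C\Omega+D)$, apply $P(\partial_W)$ at $W=0$, cancel the Gaussian factor via the identity $Q(\partial_W)=\pi i\sum_{i,j}B_{ij}\Delta_{i,j}$ together with pluriharmonicity of $P$ (using $\langle P,QR\rangle=\langle Q(\partial)P,R\rangle$), and convert the resulting relation $f_P(M\cdot\Omega)=\rho(C\Omega+D)\,f_{\tau(\,{}^t(C\Omega+D))P}(\Omega)$ into the $\widehat\tau\otimes\rho$ transformation law through the definition of $\widehat\tau$ --- exactly the computation reflected in formula (10.3) of the paper. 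The only blemish is your closing remark for $n=1$: when $\deg P=0$ the $(T,R)=(0,0)$ term does not die under $P(\partial_Z)$, so what your Fourier computation actually gives (and what is needed for the behavior at the cusp) is that $f_P$ has an expansion supported on $T\geq 0$, i.e.\ boundedness at infinity, rather than cuspidality.
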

\vskip 2mm\noindent
{\it Proof.} See  Main Theorem in \cite{YJH4}. \hfill $\square$

\vskip 3mm
We obtain an interesting and important identity by applying Theorem 10.1 to the Eisenstein series.
Let $\CM$ be a half integral positive symmetric matrix of degree $m$. We set
\begin{equation*}
\G_{n;[0]}:=\left\{ \begin{pmatrix} A & B \\ C & D \end{pmatrix}\in \G_n \ \Big|\ C=0\,\right\}.
\end{equation*}
Let $\mathscr R$ be a complete system of representatives of the cosets $\G_{n;[0]}\ba \G_n$ and $\Lambda$ be a complete system of representatives of the cosets
$\BZ^{(m,n)}/\big( {\rm Ker}(\CM) \cap \BZ^{(m,n)}\big)$, where ${\rm Ker}(\CM):=\left\{ \lambda\in \BR^{(m,n)}\,|\ \CM\cdot \lambda=0\,\right\}.$
Let $k\in\BZ^+$ be a positive integer. In \cite{Zi}, Ziegler defined the Eisenstein series $E_{k,\CM}^{(n)}(\Omega,Z)$ of Siegel type by
\begin{eqnarray*}
E_{k,\CM}^{(n)}(\Omega,Z):=&& \sum_{{\tiny\begin{pmatrix} A & B \\ C & D \end{pmatrix}}\in \mathscr R} \det (C\Omega+D)^{-k}\cdot e^{2\pi i\,\sigma(\CM Z(C\Om+D)^{-1}C\,^tZ)}\\
&& \cdot \sum_{\lambda\in \Lambda} e^{2\pi i\, \sigma \big(\CM ((A\Om+B)(C\Om+D)^{-1}\,{}^t\lambda + 2\,\lambda\,{}^t(C\Om+D)^{-1}\,{}^tZ)\big)},
\end{eqnarray*}
where $(\Om,Z)\in \BH_{n,m}.$ Now we assume that $k>n+m+1$ and $k$ is even. Then according to \cite{Zi}, Theorem 2.1, $E_{k,\CM}^{(n)}(\Omega,Z)$ is a nonvanishing Jacobi form
in $J_{k,\CM}(\G_n).$ By Theorem 10.1, $\big( E_{k,\CM}^{(n)} \big)_\tau$ is a ${\rm Hom}(V_\tau,\BC)$-valued modular form of type $\widehat\tau \otimes \det^k.$
We define the automorphic factor $j: Sp(n,\BR)\times \BH_n \lrt GL(n,\BC)$ by
\begin{equation*}
j(g,\Om):=C\Om +D,\qquad g=\begin{pmatrix} A & B \\ C & D \end{pmatrix}\in Sp(n,\BR),\ \Om\in \BH_n.
\end{equation*}
Then according to the relation occurring in the process of the proof of Theorem 10.1, for any homogeneous pluriharmonic polynomial $P$ with respect to $(2\,\CM)^{-1},$
we obtain the following identity
\begin{eqnarray}
&&\det j(M,\Om)^k  \sum_{\gamma\in {\mathscr R}} \sum_{\lambda\in \Lambda} \det j(\gamma,\Om)^{-k}\cdot P\big( 4\pi i\,\CM \lambda\,{}^t\! j(\gamma,\Om)^{-1} \big)\cdot
e^{2\pi i\,\sigma \big( \CM (\gamma\cdot \Om)\,{}^t\lambda \big)}\\
&& \quad = \sum_{\gamma\in {\mathscr R}} \sum_{\lambda\in \Lambda} \det j(\gamma,M\!\cdot\!\Om)^{-k}\cdot P\big( 4\pi i\,\CM \lambda\,{}^t\! j(\gamma M,\Om)^{-1} \big)
\cdot e^{2\pi i\,\sigma \big( \CM ((\gamma M)\cdot \Om)\,{}^t\lambda \big)} \nonumber
\end{eqnarray}
for all $M\in \G_n$ and $\Om\in \BH_n.$

\vskip 2mm
For any homogeneous pluriharmonic polynomial $P$ with respect to $(2\,\CM)^{-1},$ we define the function $G_P: \Gamma_n \times \BH_n \lrt \BC$ by
\begin{equation}
G_P(M,\Om):= \sum_{\gamma\in {\mathscr R}} \sum_{\lambda\in \Lambda} \det j(\gamma M, \Om)^{-k}\, P\big( 4\pi i\,\CM \lambda\,{}^t\! j(\gamma M,\Om)^{-1} \big)
\, e^{2\pi i\,\sigma \big( \CM ((\gamma M)\cdot \Om)\,{}^t\lambda \big)},
\end{equation}
where $M\in \G_n$ and $\Om\in \BH_n.$ Then according to Formula (10.3), we obtain the following relation
\begin{equation}
G_P(M,\Om)=G_P(I_{2n},\Om)\qquad \textrm{for all} \ M\in \G_n\ \textrm{and}\ \Om\in \BH_n.
\end{equation}

If $P=c$ is a constant, we see from (10.3) and (10.5) that $G_c:=G_P$ satisfies the following relation
\begin{equation}
G_c(M,N\!\cdot\!\Om)=G_c(I_{2n},N\!\cdot\! \Om)=\det j(N,\Om)^k\, G_c(M,\Om)
\end{equation}
for all $M,N\in \G_n$ and $\Om\in \BH_n$. Therefore for any $M\in \G_n$, the function $G_c (M,\cdot):\BH_n\lrt \BC$ is a Siegel modular form of weight $k$.

\end{section}

\vskip 10mm

\begin{section}{{\bf Maass-Jacobi Forms}}
\setcounter{equation}{0}

\newcommand\ddx{{{\partial^2}\over{\partial x^2}}}
\newcommand\ddy{{{\partial^2}\over{\partial y^2}}}
\newcommand\ddu{{{\partial^2}\over{\partial u^2}}}
\newcommand\ddv{{{\partial^2}\over{\partial v^2}}}
\newcommand\px{{{\partial}\over{\partial x}}}
\newcommand\py{{{\partial}\over{\partial y}}}
\newcommand\pu{{{\partial}\over{\partial u}}}
\newcommand\pv{{{\partial}\over{\partial v}}}
\newcommand\pxu{{{\partial^2}\over{\partial x\partial u}}}
\newcommand\pyv{{{\partial^2}\over{\partial y\partial v}}}

\vskip 0.3cm Using $G^J$-invariant differential operators on the Siegel-Jacobi
space, we introduce a notion of Maass-Jacobi forms.

\vskip 0.2cm
\begin{definition} Let
$$\Gamma_{n,m}:=Sp(n,{\mathbb Z})\ltimes H_{\mathbb Z}^{(n,m)}$$
be the discrete subgroup of $G^J$, where
$$H_{\BZ}^{(n,m)}=\left\{ (\lambda,\mu;\kappa)\in
H_{\BR}^{(n,m)}\,|\ \lambda,\mu,\kappa \ \textrm{are integral}\
\right\}.$$ A smooth function $f:\Hnm\lrt \BC$ is called a
$\textsf{Maass}$-$\textsf{Jacobi form}$ on $\Hnm$ if $f$ satisfies
the following conditions (MJ1)-(MJ3)\,:\vskip 0.1cm (MJ1)\ \ \ $f$
is invariant under $\G_{n,m}.$\par (MJ2)\ \ \ $f$ is an
eigenfunction of the Laplacian $\Delta_{n,m;A,B}$ (cf. Formula (2.4)).\par (MJ3)\ \ \ $f$
has a polynomial growth, that is, there exist a constant $C>0$
\par \ \ \ \ \ \ \ \ \ \ \ and a positive integer $N$ such that
\begin{equation*}
|f(X+i\,Y,Z)|\leq C\,|p(Y)|^N\quad \textrm{as}\ \det
Y\longrightarrow \infty,
\end{equation*}

\ \ \ \ \ \ \ \ \ \ \ where $p(Y)$ is a polynomial in
$Y=(y_{ij}).$
\end{definition}

\begin{remark}
We also may define the notion of Maass-Jacobi forms as follows.
Let $\mathbb{D}_*$ be a commutative subalgebra of $\mathbb{D}(\Hnm)$ containing the Laplacian
$\Delta_{n,m;A,B}$.
We say that a smooth function $f:\Hnm\lrt \BC$ is a Maass-Jacobi form with respect to $\mathbb{D}_*$
if $f$ satisfies the conditions $(MJ1),\ (MJ2)_*$ and $(MJ3)$\,: the condition $(MJ2)_*$ is given by
\vskip 0.3cm\noindent $(MJ2)_*$\  $f$ is an eigenfunction of any invariant differential
operator in $\BD_*$.
\end{remark}

\begin{remark}
Erik Balslev \cite{B} developed the spectral theory of $\Delta_{1,1;1,1}$ on $\Bbb H_{1,1}$ to
prove that the set of all eigenvalues of $\Delta_{1,1;1,1}$ satisfies the Weyl law.
\end{remark}

\vskip 0.3cm It is natural to propose the following problems.

\vskip 0.3cm\noindent {\bf {Problem\ A}\,:} Find all the
eigenfunctions of $\Delta_{n,m;A,B}.$

\vskip 0.3cm\noindent {\bf {Problem\ B}\,:} Construct Maass-Jacobi
forms.

\vskip 0.5cm If we find a {\it nice} eigenfunction $\phi$ of the Laplacian $\Delta_{n,m;A,B}$, we can construct
a Maass-Jacobi form $f_\phi$ on $\Hnm$ in the usual way defined by
\begin{equation*}
f_\phi(\Omega,Z):=\,\sum_{\gamma\in \Gamma_{n,m}^\infty\backslash \Gamma_{n,m}} \phi\big( \gamma\cdot (\Omega,Z)\big),
\end{equation*}
where
\begin{equation*}
\Gamma_{n,m}^\infty=\left\{ \left( \begin{pmatrix} A&B\\
C&D\end{pmatrix},(\lambda,\mu;\kappa)\right)\in \Gamma_{n,m}\,\Big|\ C=0\,\right\}
\end{equation*}
\noindent is a subgroup of $\Gamma_{n,m}.$

\vskip 0.3cm  We consider the simple case when $n=m=1$ and $A=B=1$. A metric
$ds_{1,1;1,1}^2$ on $\BH_{1,1}$ given by
\begin{align*} ds^2_{1,1;1,1}\,=\,&{{y\,+\,v^2}\over
{y^3}}\,(\,dx^2\,+\,dy^2\,)\,+\, {\frac 1y}\,(\,du^2\,+\,dv^2\,)\\
&\ \ -\,{{2v}\over {y^2}}\, (\,dx\,du\,+\,dy\,dv\,)\end{align*} is
a $G^J$-invariant K{\"a}hler metric on $\BH_{1,1}$.
Its Laplacian $\Delta_{1,1;1,1}$ is given by
\begin{align*} \Delta_{1,1;1,1}\,=\,& \,y^2\,\left(\,\ddx\,+\,\ddy\,\right)\,\\
&+\, (\,y\,+\,v^2\,)\,\left(\,\ddu\,+\,\ddv\,\right)\\ &\ \
+\,2\,y\,v\,\left(\,\pxu\,+\,\pyv\,\right).\end{align*}

\vskip 0.2cm We provide some examples of eigenfunctions of
$\Delta_{1,1;1,1}$. \vskip 0.2cm $(a)\ h(x,y)=y^{1\over
2}K_{s-{\frac12}}(2\pi |a|y)\,e^{2\pi iax} \ (s\in \BC,$
$a\not=0\,)$ with eigenvalue \par
\ \ \ \ $s(s-1).$ Here
$$K_s(z):={\frac12}\int^{\infty}_0 \exp\left\{-{z\over 2}(t+t^{-1})\right\}\,t^{s-1}\,dt,$$ \indent \ \ \ \ where
$\mathrm{Re}\,z
> 0.$ \par $(b)\ y^s,\ y^s x,\ y^s u\ (s\in\BC)$ with eigenvalue
$s(s-1).$
\par
$(c) \ y^s v,\ y^s uv,\ y^s xv$ with eigenvalue $s(s+1).$
\par
$(d)\ x,\,y,\,u,\,v,\,xv,\,uv$ with eigenvalue $0$.
\par
$(e)$\ All Maass wave forms.

\vskip 0.7cm
Let $\rho$ be a rational representation of $GL(n,\BC)$ on a finite dimensional complex vector space $V_\rho$.
Let $\mathcal M\in \BR^{(m,m)}$ be a symmetric half-integral semi-positive definite matrix of degree $m$. Let
$C^\infty(\Hnm,V_\rho)$ be the algebra of all $C^\infty$ functions on $\Hnm$ with values in $V_\rho$. We define the
$|_{\rho,\mathcal M}$-slash action of $G^J$ on $C^\infty(\Hnm,V_\rho)$ as follows: If $f\in C^\infty(\Hnm,V_\rho)$,
\begin{eqnarray*}
& & f|_{\rho,\mathcal M}[(M,(\lambda,\mu;\kappa))](\Om,Z) \nonumber\\
&:=&\,e^{-2\pi i \,\sigma (\mathcal M [Z+\lambda \Omega+\mu](C\Omega+D)^{-1}C)}\cdot
e^{2\pi i \,\sigma (\mathcal M (\lambda \Om \,^t\!\lambda\,+\,2\lambda\,^t\!Z\,+\,\kappa\,+\,\mu\,^t\!\lambda))}\\
& & \ \times\, \rho(C\Om+D)^{-1} f(M\cdot\Om,(Z+\lambda\Om+\mu)(C\Om+D)^{-1}),\nonumber
\end{eqnarray*}
where $M=\begin{pmatrix} A&B\\C&D\end{pmatrix}\in Sp(n,\BR)$ and $(\lambda,\mu;\kappa)\in H_\BR^{(n,m)}$. We recall the Siegel's notation
$\alpha[\beta]=\,^t\beta\alpha \beta$ for suitable matrices $\alpha$ and $\beta$. We define $\BD_{\rho,\mathcal M}$ to be
the algebra of all differential operators $D$ on $\Hnm$ satisfying the following condition
\begin{equation*}
(Df)|_{\rho,\mathcal M}[g]=\,D(f|_{\rho,\mathcal M}[g])
\end{equation*}
for all $f\in C^\infty(\Hnm,V_\rho)$ and for all $g\in G^J.$ We denote by ${\mathcal Z}_{\rho,\mathcal M}$ the center of
$\BD_{\rho,\mathcal M}$.

\vskip 0.3cm We define another notion of Maass-Jacobi forms as follows.
\vskip 0.31cm
\begin{definition} A vector-valued smooth function $\phi:\Hnm\lrt V_\rho$ is called a Maass-Jacobi form on $\Hnm$ of type $\rho$
and index $\mathcal M$ if it satisfies the following conditions $(MJ1)_{\rho,\mathcal M},\ (MJ2)_{\rho,\mathcal M}$ and
$(MJ3)_{\rho,\mathcal M}$\,:
\vskip 0.1cm $(MJ1)_{\rho,\mathcal M}$\ \ \ $\phi|_{\rho,\mathcal M}[\gamma]=\phi$\ \ for all $\gamma\in\G_{n,m}.$\par
$(MJ2)_{\rho,\mathcal M}$\ \ \ $f$ is an
eigenfunction of all differential operators in the center ${\mathcal Z}_{\rho,\mathcal M}$ of $\BD_{\rho,\mathcal M}$.\par
$(MJ3)_{\rho,\mathcal M}$\ \ \ $f$
has a growth condition
$$\phi(\Om,Z)=O\Big( e^{a\det Y}\cdot e^{2\pi \textrm{tr}(\mathcal M [V]Y^{-1})} \Big)$$
\qquad \qquad \quad \ \quad as $\det Y\lrt \infty$ for some $a>0.$
\end{definition}

\begin{remark}
In the sense of Definition 11.2, Pitale \cite{Pit} studied Maass-Jacobi forms on the Siegel-Jacobi space $\BH_{1,1}.$
We refer to \cite{YJH12, YJH13} for more details on Maass-Jacobi forms.
\end{remark}

\end{section}

\vskip 10mm

\newcommand\tg{\widetilde\gamma}
\newcommand\wmo{{\mathscr W}_{\mathcal M,\Omega}}
\newcommand\hrnm{H_\BR^{(n,m)}}
\newcommand\rmn{\BR^{(m,n)}}

\begin{section}{{\bf The Schr{\"o}dinger-Weil Representation}}
\setcounter{equation}{0}

\vskip 0.2cm Throughout this section we assume that $\CM$ is a positive definite
symmetric real $m\times m$ matrix. We
consider the Schr{\"o}dinger representation ${\mathscr W}_\CM$ of
the Heisenberg group $\hrnm$ with the central character ${\mathscr
W}_\CM((0,0;\kappa))=\chi_\CM((0,0;\kappa))=e^{\pi
i\,\s(\CM\kappa)},\ \kappa\in S(m,\BR)$. Then ${\mathscr W}_\CM$ is expresses explicitly as follows:
\begin{equation}
\left[ {\mathscr W}_\CM (h_0)f\right](\lambda)=e^{\pi
i\sigma\{\CM(\kappa_0+\mu_0\,^t\!\lambda_0+
2\lambda\,^t\!\mu_0)\}}\,f(\lambda+\lambda_0),
\end{equation}
\noindent where $h_0=(\lambda_0,\mu_0;\kappa_0)\in \hrnm$ and $\lambda\in\BR^{(m,n)}.$ For the construction of ${\mathscr W}_\CM$ we refer to \cite{YJH16}.
We note that the symplectic group $Sp(n,\BR)$ acts on $\hrnm$ by
conjugation inside $G^J$. For a fixed element $g\in Sp(n,\BR)$,
the irreducible unitary representation ${\mathscr W}_\CM^g$ of
$\hrnm$ defined by
\begin{equation}
{\mathscr W}_\CM^g(h)={\mathscr W}_\CM(ghg^{-1}),\quad h\in\hrnm
\end{equation}
has the property that
\begin{equation*}
{\mathscr W}_\CM^g((0,0;\kappa))={\mathscr W}_\CM((0,0;\kappa))=e^{\pi
i\,\s(\CM \kappa)}\,\textrm{Id}_{H(\chi_\CM)},\quad \kappa\in
S(m,\BR).
\end{equation*}
Here $\textrm{Id}_{H(\chi_\CM)}$ denotes the identity operator on
the Hilbert space $H(\chi_\CM).$ According to Stone-von Neumann
theorem, there exists a unitary operator $R_\CM(g)$ on
$H(\chi_\CM)$  with $R_\CM (I_{2n})=\textrm{Id}_{H(\chi_\CM)}$ such that
\begin{equation}
R_\CM(g){\mathscr W}_\CM(h)={\mathscr
W}_\CM^g(h) R_\CM(g)\qquad {\rm for\ all}\ h\in\hrnm.
\end{equation}
We observe that
$R_\CM(g)$ is determined uniquely up to a scalar of modulus one.

\vskip 0.35cm
From now on, for brevity, we put $G=Sp(n,\BR).$ According to
Schur's lemma, we have a map $c_\CM:G\times G\lrt T$ satisfying
the relation
\begin{equation}
R_\CM(g_1g_2)=c_\CM(g_1,g_2)R_\CM(g_1)R_\CM(g_2)\quad \textrm{for
all }\ g_1,g_2\in G.
\end{equation}
We recall that $T$ denotes the multiplicative group of complex numbers of modulus one.
Therefore $R_\CM$ is a projective representation of $G$ on
$H(\chi_\CM)$ and $c_\CM$ defines the cocycle class in $H^2(G,T).$
The cocycle $c_\CM$ yields the central extension $G_\CM$ of $G$ by
$T$. The group $G_\CM$ is a set $G\times T$ equipped with the
following multiplication

\begin{equation}
(g_1,t_1)\cdot (g_2,t_2)=\big(g_1g_2,t_1t_2\,
c_\CM(g_1,g_2)^{-1}\,\big),\quad g_1,g_2\in G,\ t_1,t_2\in T.
\end{equation}
We see immediately that the map ${\widetilde R}_\CM:G_\CM\lrt
GL(H(\chi_\CM))$ defined by

\begin{equation}
{\widetilde R}_\CM(g,t)=t\,R_\CM(g) \quad \textrm{for all}\
(g,t)\in G_\CM
\end{equation}
is a {\it true} representation of $G_\CM.$ As in Section 1.7 in
\cite{LV}, we can define the map $s_\CM:G\lrt T$ satisfying the
relation
\begin{equation*}
c_\CM(g_1,g_2)^2=s_\CM(g_1)^{-1}s_\CM(g_2)^{-1}s_\CM(g_1g_2)\quad
\textrm{for all}\ g_1,g_2\in G.
\end{equation*}
Thus we see that
\begin{equation}
G_{2,\CM}=\left\{\, (g,t)\in G_\CM\,|\ t^2=s_\CM(g)^{-1}\,\right\}
\end{equation}

\noindent is the metaplectic group associated with $\CM$ that is a
two-fold covering group of $G$. The restriction $R_{2,\CM}$ of
${\widetilde R}_\CM$ to $G_{2,\CM}$ is the $\textsf{Weil representation}$ of
$G$ associated with $\CM$.
\par
If we identify $h=(\lambda,\mu;\kappa)\in \hrnm$ (resp. $g\in Sp(n, \BR)$) with
$(I_{2n},(\lambda,\mu;\kappa))\in G^J$ (resp. $(g,(0,0;0))\in G^J),$
every element $\tilde g$ of $G^J$ can be written as $\tilde g =hg$ with $h\in \hrnm$
and $g\in Sp(n, \BR)$. In fact,
\begin{equation*}
(g,(\la,\mu;\kappa))=(I_{2n},((\la,\mu)g^{-1};\kappa))\,(g,(0,0;0))=((\la,\mu)g^{-1};\kappa)\cdot g.
\end{equation*}
Therefore we define the {\it projective} representation $\pi_\CM$ of the Jacobi group
$G^J$ with cocycle $c_\CM (g_1,g_2)$ by
\begin{equation}
\pi_\CM(hg)={\mathscr W}_\CM(h)\,R_\CM(g),\quad h\in\hrnm,\ g\in
G.
\end{equation}
\noindent
Indeed, since $\hrnm$ is a normal subgroup of $G^J$, for any $h_1,h_2\in \hrnm$ and $g_1,g_2\in G$,
\begin{eqnarray*}
\pi_\CM (h_1g_1h_2g_2)&=& \pi_\CM (h_1g_1h_2g_1^{-1}g_1g_2)\\
&=& {\mathscr W}_\CM \big(h_1(g_1h_2g_1^{-1})\big) R_\CM (g_1g_2)\\
&=& c_\CM (g_1,g_2) {\mathscr W}_\CM (h_1) {\mathscr W}_\CM^{g_1}(h_2) R_\CM (g_1)R_\CM (g_2)\\
&=& c_\CM (g_1,g_2) {\mathscr W}_\CM (h_1)  R_\CM (g_1) {\mathscr W}_\CM (h_2) R_\CM (g_2)\\
&=& c_\CM (g_1,g_2) \pi_\CM (h_1g_1) \pi_\CM (h_2g_2).
\end{eqnarray*}
\par
We let
$$G_{\CM}^J\!=G_{\CM}\ltimes \hrnm$$
be the semidirect product of $G_{\CM}$ and $\hrnm$ with the multiplication law
\begin{eqnarray*}
&&\big( (g_1,t_1),(\la_1,\mu_1;\kappa_1)\big)\cdot \big( (g_2,t_2),(\la_2,\mu_2\,;\kappa_2)\big)\\
&=&\big( (g_1,t_1)(g_2,t_2),(\tilde\la+\la_2,\tilde\mu+\mu_2\,;\kappa_1+\kappa_2+ \tilde\la\,^t\!\mu_2-\tilde\mu\,^t\!\la_2)\big),
\end{eqnarray*}
where $(g_1,t_1), (g_2,t_2)\in G_{\CM},\ (\la_1,\mu_1;\kappa_1), (\la_2,\mu_2\,;\kappa_2)\in\hrnm$ and
$(\tilde\la,\tilde\mu)=(\la,\mu)g_2.$
If we identify $h=(\lambda,\mu\,;\kappa)\in \hrnm$ (resp. $(g,t)\in G_{\CM})$ with
$((I_{2n},1),(\lambda,\mu\,;\kappa))\in G^J_{\CM}$ (resp. $((g,t),(0,0;0))\in G^J_{\CM}),$
we see easily that every element $\big( (g,t),(\la,\mu\,;\kappa)\big)$ of $G_{\CM}^J$ can be expressed as
$$\big( (g,t),(\la,\mu\,;\kappa)\big)=\big( (I_{2n},1),((\la,\mu)g^{-1};\kappa)\big) \big( (g,t),(0,0;0)\big)=((\la,\mu)g^{-1};\kappa)(g,t).  $$
Now we can define the {\it true} representation $\widetilde\om_\CM$ of $G_{\CM}^J$ by
\begin{equation}
\widetilde\omega_\CM(h\!\cdot\!(g,t))=t\,\pi_\CM(hg)=t\, {\mathscr
W}_\CM(h)\,R_\CM(g),\quad h\in\hrnm,\ (g,t)\in G_{\CM}.
\end{equation}
Indeed, since $\hrnm$ is a normal subgroup of $G_{\CM}^J$,
\begin{eqnarray*}
&& \widetilde\omega_\CM \big( h_1 (g_1,t_1) h_2 (g_2,t_2)\big)\\
&=& \widetilde\omega_\CM \big( h_1 (g_1,t_1) h_2 (g_1,t_1)^{-1}(g_1,t_1) (g_2,t_2)\big)\\
&=& \widetilde\omega_\CM \big( h_1 (g_1,t_1) h_2 (g_1,t_1)^{-1}\big(g_1g_2,t_1t_2\, c_\CM(g_1,g_2)^{-1}\big) \big)\\
&=& t_1t_2\, c_\CM(g_1,g_2)^{-1}\, {\mathscr W}_\CM \big(   h_1 (g_1,t_1) h_2 (g_1,t_1)^{-1} \big) R_\CM (g_1g_2)\\
&=& t_1t_2\, {\mathscr W}_\CM (h_1)  {\mathscr W}_\CM  \big(  (g_1,t_1) h_2 (g_1,t_1)^{-1} \big)\,R_\CM(g_1)R_\CM(g_2)\\
&=& t_1t_2\, {\mathscr W}_\CM (h_1)  {\mathscr W}_\CM  \big(  g_1 h_2 g_1^{-1} \big)\,R_\CM(g_1)R_\CM(g_2)\\
&=& t_1t_2\, {\mathscr W}_\CM (h_1)  R_\CM(g_1)\, {\mathscr W}_\CM  (h_2) R_\CM(g_2)\\
&=& \left\{ t_1 \,\pi_\CM (h_1g_1)\right\} \left\{ t_2 \,\pi_\CM (h_2g_2)\right\}\\
&=& \widetilde\omega_\CM \big( h_1 (g_1,t_1) \big)\, \tilde\omega_\CM \big( h_2 (g_2,t_2) \big).
\end{eqnarray*}
Here we used the fact that $(g_1,t_1) h_2 (g_1,t_1)^{-1}=g_1 h_2 g_1^{-1}.$

\vskip0.2cm
We recall that the following matrices
\begin{eqnarray*}
t(b)&=&\begin{pmatrix} I_n& b\\
                   0& I_n\end{pmatrix}\ \textrm{with any}\
                   b=\,{}^tb\in \BR^{(n,n)},\\
g(\alpha)&=&\begin{pmatrix} {}^t\alpha & 0\\
                   0& \alpha^{-1}  \end{pmatrix}\ \textrm{with
                   any}\ \alpha\in GL(n,\BR),\\
\s_n&=&\begin{pmatrix} 0& -I_n\\
                   I_n&\ 0\end{pmatrix}
\end{eqnarray*}
\noindent generate the symplectic group $G=Sp(n,\BR)$
(cf.\,\cite[p.\,326]{Fr2},\,\cite[p.\,210]{Mf2}). Therefore the
following elements $h_t(\lambda,\mu\,;\kappa),\
t(b\,;t),\,g(\alpha\,;t)$ and $\s_{n\,;t}$ of $G_\CM\ltimes \hrnm$
defined by
\begin{eqnarray*}
&& h_t(\la,\mu\,;\kappa)=\big( (I_{2n},t),(\la,\mu;\kappa)\big)\
\textrm{with}\ t\in T,\ \la,\mu\in
\BR^{(m,n)}\ \textrm{and}\ \kappa\in\BR^{(m,m)} ,\\
&&t(b\,;t)=\big( (t(b),t),(0,0;0) \big)\ \textrm{with any}\
                   b=\,{}^tb\in \BR^{(n,n)},\ t\in T,\\
&& g(\alpha\,;t)=\left(
\big(g(\alpha),t),(0,0;0)\right)\
\textrm{with any}\ \alpha\in GL(n,\BR)\ \textrm{and}\ t\in T,\\
 &&\s_{n\,;\,t}=\left( (\s_n,t),(0,0;0)\right)\
 \textrm{with}\ t\in T
\end{eqnarray*}
generate the group $G_\CM\ltimes\hrnm.$ We can show that the
representation ${\widetilde \om}_\CM$ is realized on the
representation $H(\chi_\CM)=L^2\big(\rmn\big)$ as follows: for
each $f\in L^2\big(\rmn\big)$ and $x\in \rmn,$ the actions of
${\widetilde \om}_\CM$ on the generators are given by

\begin{eqnarray}
\left[ {\widetilde \om}_\CM
\big(h_t(\lambda,\mu\,;\kappa)\big)f\right](x)&=&\,t\,e^{\pi
i\,\s\{\CM(\kappa+\mu\,{}^t\!\lambda+2\,x\,{}^t\mu)\}}\,f(x+\lambda),\\
\left[ {\widetilde \om}_\CM\big(t(b\,;t)\big)f\right](x)&=& t\,e^{\pi i\,\s(\CM\, x\,b\,{}^tx)}f(x),\\
\left[ {\widetilde \om}_\CM\big(g(\alpha\,;t)\big)f\right](x)&=& t\,| \det \alpha|^{\frac m2}\,f(x\,{}^t\alpha),
\end{eqnarray}
\begin{equation}
\left[ {\widetilde \om}_\CM\big(\s_{n\,;\,t}\big)f\right](x)=t\,
\big( \det \CM\big)^{\frac n2}\,\int_{\rmn}f(y)\,e^{-2\,\pi i\,\s(\CM\,y\,{}^tx)}\,dy.
\end{equation}
\par
Let
$$G_{2,\CM}^J\!=G_{2,\CM}\ltimes \hrnm$$
be the semidirect product of $G_{2,\CM}$ and $\hrnm$. Then $G_{2,\CM}^J$ is a subgroup of $G_\CM^J$ which is a two-fold covering group of the Jacobi group $G^J.$
The restriction $\om_\CM$ of $\widetilde \om_\CM$ to $G_{2,\CM}^J$ is called the $\textsf{Schr{\"o}dinger-Weil}$
$\textsf{representation}$ of $G^J$ associated with $\CM$.

\vskip 0.251cm
We denote by $L^2_+\big(\rmn\big)$\,$\big(
\textrm{resp.}\,\,L^2_-\big(\rmn\big)\big)$ the subspace of
$L^2\big(\rmn\big)$ consisting of even (resp.\,odd) functions in
$L^2\big(\rmn\big)$. According to Formulas (12.11)--(12.13),
$R_{2,\CM}$ is decomposed into representations of $R_{2,\CM}^\pm$

\begin{equation*}
R_{2,\CM}=R_{2,\CM}^+\oplus R_{2,\CM}^-,
\end{equation*}
where $R_{2,\CM}^+$ and $R_{2,\CM}^-$ are the even Weil
representation and the odd Weil representation of $G$ that are
realized on $L^2_+\big(\rmn\big)$ and $L^2_-\big(\rmn\big)$
respectively. Obviously the center ${\mathscr Z}^J_{2,\CM}$ of
$G_{2,\CM}^J$ is given by
\begin{equation*}
{\mathscr Z}_{2,\CM}^J=\big\{ \big( (I_{2n},1),(0,0;\kappa)\big)\in
G_{2,\CM}^J\,\big\} \cong S(m,\BR).
\end{equation*}
We note that the restriction of $\omega_\CM$ to $G_{2,\CM}$
coincides with $R_{2,\CM}$ and $\omega_\CM(h)={\mathscr W}_\CM(h)$
for all $h\in \hrnm.$

\vskip 0.2cm\noindent
\begin{remark}
In the case $n=m=1,\
\omega_\CM$ is dealt in \cite{BS} and \cite{Ma}. We refer to
\cite{G} and \cite{KV} for more details about the Weil
representation $R_{2,\CM}$.
\end{remark}

\begin{remark}
The Schr{\"o}dinger-Weil representation is applied to the theory of Maass-Jacobi forms \cite{Pit}.
\end{remark}

\vskip 0.35cm
Let $\mathcal M$ be a positive definite symmetric real matrix of degree $m$. We recall the Schr{\"o}dinger representation ${\mathscr W}_\CM$ of the Heisenberg
group $\hrnm$ associate with $\CM$ given by Formula (12.1). We note that for an element $(\la,\mu;\kappa)$ of $\hrnm$, we have the
decomposition
\begin{equation*}
(\la,\mu;\kappa)=(\la,0;0)\circ (0,\mu;0)\circ (0,0;\kappa\!-\!\la\, {}^t\!\mu).
\end{equation*}

We consider the embedding $\Phi_n :SL(2,\BR)\lrt Sp(n,\BR)$ defined by
\begin{equation}
\Phi_n \left( \begin{pmatrix} a & b\\ c & d \end{pmatrix}\right):=
\begin{pmatrix} aI_n & bI_n\\ cI_n & d I_n\end{pmatrix},\qquad \begin{pmatrix} a & b\\ c & d \end{pmatrix}\in SL(2,\BR).
\end{equation}

\vskip 0.35cm
For $x,y\in \BR^{(m,n)},$ we put
$$ (x,y)_\CM:=\s (\,{}^tx \CM y)\qquad \textrm{and}\qquad \| x\|_\CM :=\sqrt{(x,x)_\CM}.$$
According to Formulas (12.11)-(12.13), for any $ M=\begin{pmatrix} a & b\\ c & d \end{pmatrix}\in SL(2,\BR)\hookrightarrow Sp(n,\BR)$ and
$f\in L^2\left(\BR^{(m,n)}\right)$, we have the following explicit representation
\begin{equation}
[R_\CM (M)f](x)= \begin{cases} |a|^{\frac{mn}2} e^{ab \|x\|_\CM^2 \pi i} f(ax)  & \text{if $c=0$,}\\
(\det \CM)^{\frac n2}\, |c|^{-{\frac{mn}2}} \int_{\BR^{(m,n)}} e^{{\frac{\alpha(M,x,y,\CM)}c} \pi i} f(y) dy
& \text{if $c\neq 0$,}\end{cases}
\end{equation}
where
$$ \alpha(M,x,y,\CM)= a\,\|x\|_\CM^2 + d\, \|y\|_\CM^2 - 2 (x,y)_\CM.$$
Indeed, if $a=0$ and $c\neq 0$, using the decomposition
\begin{equation*}
M=\begin{pmatrix} 0 & -c^{-1}\\ c & d \end{pmatrix}=
\begin{pmatrix} 0 & -1\\ 1 & \ 0 \end{pmatrix} \begin{pmatrix} c & d\\ 0 & c^{-1} \end{pmatrix}
\end{equation*}
and
if $a\neq 0$ and $c\neq 0$, using the decomposition
\begin{equation*}
M=\begin{pmatrix} a & b\\ c & d \end{pmatrix}=\begin{pmatrix} a & c^{-1} \\ 0 & a^{-1} \end{pmatrix}
\begin{pmatrix} 0 & -1\\ 1 & \ 0 \end{pmatrix} \begin{pmatrix} ac & ad\\ 0 & (ac)^{-1} \end{pmatrix},
\end{equation*}
we obtain Formula (12.15).

\vskip 0.35cm
If
\begin{equation*}
M_1=\begin{pmatrix} a_1 & b_1\\ c_1 & d_1 \end{pmatrix},\quad M_2=\begin{pmatrix} a_2 & b_2\\ c_2 & d_2 \end{pmatrix}\quad
\textrm{and}\quad M_3=\begin{pmatrix} a_3 & b_3\\ c_3 & d_3 \end{pmatrix}\in SL(2,\BR)
\end{equation*}
with $M_3=M_1M_2$, the corresponding cocycle is given by
\begin{equation}
c_\CM (M_1,M_2)=e^{-i\, \pi\, mn\,\textrm{sign}(c_1c_2c_3)/4},
\end{equation}
where
\begin{equation*}
\textrm{sign}(x)= \begin{cases}  -1 \qquad &(x<0)\\
\ \ 0 \qquad &(x=0)\\
\ \ 1 \qquad &(x>0). \end{cases}
\end{equation*}
In the special case when
\begin{equation*}
M_1=\begin{pmatrix} \cos \phi_1 & -\sin \phi_1 \\ \sin \phi_1 & \ \ \cos \phi_1 \end{pmatrix}\quad \textrm{and}\quad
M_2=\begin{pmatrix} \cos \phi_2 & -\sin \phi_2 \\ \sin \phi_2 & \ \ \cos \phi_2 \end{pmatrix},
\end{equation*}
we find
\begin{equation*}
c_\CM (M_1,M_2)=e^{-i\, \pi\, mn\,(\sigma_{\phi_1}+\sigma_{\phi_2}-\sigma_{\phi_1+\phi_2})/4},
\end{equation*}
where
\begin{equation*}
\sigma_\phi= \begin{cases}  2\nu \qquad & \text{if $\phi=\nu\pi$}\\
2\nu+1 \qquad & \text{if $\nu\pi <\phi< (\nu+1)\pi.$} \end{cases}
\end{equation*}
It is well known that every $M\in SL(2,\BR)$ admits the unique Iwasawa decomposition
\begin{equation}
M=\begin{pmatrix} 1 & u\\ 0 & 1 \end{pmatrix} \begin{pmatrix} v^{1/2} & 0 \\ 0 & v^{-1/2} \end{pmatrix}
\begin{pmatrix} \cos \phi & -\sin \phi \\\sin \phi & \ \ \cos\phi \end{pmatrix},
\end{equation}
where $\tau=u+iv \in \BH_1$ and $\phi\in [0,2\pi).$ This parametrization $M=(\tau,\phi)$ in $SL(2,\BR)$ leads to the natural action of
$SL(2,\BR)$ on $\BH_1\times [0,2\pi)$ defined by
\begin{equation}
\begin{pmatrix} a & b\\ c & d \end{pmatrix}(\tau,\phi):=\left( \frac{a\tau+b}{c\tau+d},\, \phi + \textrm{arg} (c\tau+d)\ \textrm{mod}\ 2\pi \right).
\end{equation}

\begin{lemma}
For two elements $g_1$ and $g_2$ in $SL(2,\BR)$, we let
\begin{equation*}
g_1=\begin{pmatrix} 1 & u_1\\ 0 & 1 \end{pmatrix} \begin{pmatrix} v_1^{1/2} & 0 \\ 0 & v_1^{-1/2} \end{pmatrix}
\begin{pmatrix} \cos \phi_1 & -\sin \phi_1 \\\sin \phi_1 & \ \ \cos\phi_1 \end{pmatrix}
\end{equation*}
and
\begin{equation*}
g_2=\begin{pmatrix} 1 & u_2\\ 0 & 1 \end{pmatrix} \begin{pmatrix} v_2^{1/2} & 0 \\ 0 & v_2^{-1/2} \end{pmatrix}
\begin{pmatrix} \cos \phi_2 & -\sin \phi_2 \\\sin \phi_2 & \ \ \cos\phi_2 \end{pmatrix}
\end{equation*}
be the Iwasawa decompositions of $g_1$ and $g_2$ respectively, where $u_1,u_2\in\BR,\ v_1>0,
\,v_2>0$ and $0\leq \phi_1,\phi_2 < 2\pi.$ Let
\begin{equation*}
g_3=g_1g_2=\begin{pmatrix} 1 & u_3\\ 0 & 1 \end{pmatrix} \begin{pmatrix} v_3^{1/2} & 0 \\ 0 & v_3^{-1/2} \end{pmatrix}
\begin{pmatrix} \cos \phi_3 & -\sin \phi_3 \\\sin \phi_3 & \ \ \cos\phi_3 \end{pmatrix}
\end{equation*}
be the Iwasawa decomposition of $g_3=g_1g_2.$ Then we have
\begin{eqnarray*}
u_3&=& \frac{A}{(u_2\sin \phi_1+\cos \phi_1)^2+(v_2\sin\phi_1)^2},\\
v_3&=& \frac{v_1v_2}{(u_2\sin \phi_1+\cos \phi_1)^2+(v_2\sin\phi_1)^2}
\end{eqnarray*}
and
\begin{equation*}
\phi_3=tan^{-1} \left[
{ {(v_2\cos\phi_2+u_2 \sin\phi_2)\tan\phi_1 +\sin\phi_2}\over {(-v_2\sin\phi_2+u_2 \cos\phi_2)\tan\phi_1 +\cos\phi_2} }\right],
\end{equation*}
where
\begin{eqnarray*}
A&=&u_1 (u_2\sin \phi_1+\cos \phi_1)^2+ (u_1v_2-v_1u_2)\sin^2 \phi_1 \\
 && \ +\,v_1u_2\cos^2\phi_1+ v_1(u_2^2+v_2^2-1)\sin\phi_1\cos\phi_1.
\end{eqnarray*}
\end{lemma}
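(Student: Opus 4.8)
The plan is to reduce the statement to the standard behaviour of the $SL(2,\BR)$-action on $\BH_1$ by linear-fractional transformations, together with a direct reading-off of the bottom row of a matrix written in the Iwasawa form (12.19). First I would record the entries of an element with Iwasawa parameters $(u,v,\phi)$: a short multiplication gives
\[
\begin{pmatrix} 1 & u\\ 0 & 1\end{pmatrix}\begin{pmatrix} v^{1/2} & 0\\ 0 & v^{-1/2}\end{pmatrix}\begin{pmatrix}\cos\phi & -\sin\phi\\ \sin\phi & \cos\phi\end{pmatrix}
=\begin{pmatrix} v^{1/2}\cos\phi+uv^{-1/2}\sin\phi & -v^{1/2}\sin\phi+uv^{-1/2}\cos\phi\\ v^{-1/2}\sin\phi & v^{-1/2}\cos\phi\end{pmatrix},
\]
so the bottom row is $\big(v^{-1/2}\sin\phi,\,v^{-1/2}\cos\phi\big)$ and this row conversely determines $v$ and $\phi$. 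Writing $g_i=\begin{pmatrix} a_i & b_i\\ c_i & d_i\end{pmatrix}$, we obtain $c_1=v_1^{-1/2}\sin\phi_1$ and $d_1=v_1^{-1/2}\cos\phi_1$, whence $c_1\tau_2+d_1=v_1^{-1/2}\big((u_2\sin\phi_1+\cos\phi_1)+i\,v_2\sin\phi_1\big)$ and $|c_1\tau_2+d_1|^2=v_1^{-1}D$, where $D:=(u_2\sin\phi_1+\cos\phi_1)^2+(v_2\sin\phi_1)^2$.

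Next I would use that $\tau_k$ is characterised by $\tau_k=g_k\cdot i$ for $k=1,2,3$ (the rotation factor fixes $i$, the diagonal factor scales by $v_k$, the unipotent factor translates by $u_k$). By associativity $\tau_3=g_1\cdot(g_2\cdot i)=g_1\cdot\tau_2$, so the standard identity $\operatorname{Im}(M\cdot\tau)=\operatorname{Im}(\tau)/|c\tau+d|^2$ yields immediately $v_3=v_2/|c_1\tau_2+d_1|^2=v_1v_2/D$, which is the asserted formula for $v_3$. For $u_3=\operatorname{Re}(g_1\cdot\tau_2)$ I would peel off the Iwasawa factors of $g_1$ in turn: the rotation sends $\tau_2$ to $\tau_2'=(\tau_2\cos\phi_1-\sin\phi_1)(\tau_2\sin\phi_1+\cos\phi_1)^{-1}$, whose real part is obtained by multiplying numerator and denominator by the conjugate of the denominator, and the diagonal and unipotent factors then act by $\tau'\mapsto v_1\tau'+u_1$. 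Writing the outcome over the common denominator $D$ and regrouping $u_1D$ with the remaining terms produces $u_3=A/D$ with $A$ exactly as displayed in the statement.

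Finally, for $\phi_3$ I would not pass through the action on $\BH_1$ but instead compare bottom rows in the identity $g_3=g_1g_2$: the bottom row of $g_3$ is the bottom row $\big(v_1^{-1/2}\sin\phi_1,\,v_1^{-1/2}\cos\phi_1\big)$ of $g_1$ multiplied into $g_2$, and since it must equal $\big(v_3^{-1/2}\sin\phi_3,\,v_3^{-1/2}\cos\phi_3\big)$ we get $\tan\phi_3=(g_3)_{21}/(g_3)_{22}$. Substituting the entries of $g_2$ from the first display, the factor $v_1^{-1/2}$ cancels, and after clearing the remaining $v_2^{\pm1/2}$ one is left precisely with the quotient defining $\phi_3$ in the statement. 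The one genuine piece of work is the algebraic simplification in the $u_3$ step — expanding $\operatorname{Re}\tau_2'$ and reorganising $u_1D$ together with the leftover terms into the particular grouping of $A$ — which, though routine, is the most error-prone part; the formulas for $v_3$ and $\phi_3$ then follow at once from the Möbius identity and the bottom-row computation.
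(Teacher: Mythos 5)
Your route is essentially the paper's own computation in light disguise. The paper writes out the entries of an Iwasawa-parametrized matrix, records the inversion formulas $u=(ac+bd)(c^2+d^2)^{-1}$, $v=(c^2+d^2)^{-1}$, $\tan\phi=c/d$, and applies them to $g_3=g_1g_2$; your use of $\tau_3=g_1\cdot\tau_2$ together with $\operatorname{Im}(M\cdot\tau)=\operatorname{Im}\tau/|c\tau+d|^2$ is the same content, since those inversion formulas are exactly the real and imaginary parts of $g\cdot i$, and your $\phi_3$ step (bottom row of $g_1g_2$, $\tan\phi_3=c_3/d_3$) is literally the paper's. Your sketches for $v_3$ and $\phi_3$ are correct, and computing $u_3,v_3$ from $g_1\cdot\tau_2$ rather than from the full product matrix is a small economy, not a different method.

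The one step you did not actually perform is the one that matters: the regrouping of $u_1D+v_1\operatorname{Re}(\tau_2')$ into the displayed $A$. Carrying it out gives
\[
u_3\,D= u_1(u_2\sin\phi_1+\cos\phi_1)^2+\bigl(u_1v_2^{2}-v_1u_2\bigr)\sin^2\phi_1+v_1u_2\cos^2\phi_1+v_1(u_2^2+v_2^2-1)\sin\phi_1\cos\phi_1,
\]
so the coefficient of $\sin^2\phi_1$ involves $u_1v_2^{2}$, not $u_1v_2$ as printed in the lemma. A quick check with $u_1=v_1=1$, $\phi_1=\pi/2$, $u_2=0$, $v_2=4$, $\phi_2=0$ gives $g_1g_2\cdot i=1+\tfrac{i}{4}$, hence $u_3=1$, whereas the printed $A/D$ equals $1/4$; so the stated $A$ contains a misprint, and your assertion that the regrouping produces $A$ ``exactly as displayed'' is not what the computation yields. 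Your method is sound and, once the expansion is done, gives the corrected formula; but you should carry out that algebra (it is the whole substance of the lemma, as the paper itself hides it behind ``an easy computation'') and record the correction $u_1v_2\mapsto u_1v_2^{2}$. A minor further point, which both you and the paper pass over: $\tan^{-1}$ determines $\phi_3$ only modulo $\pi$, so the branch in $[0,2\pi)$ must be fixed by the signs of $c_3$ and $d_3$.
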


\vskip 0.26cm \noindent
{\it Proof.} If $g\in SL(2,\BR)$ has the unique Iwasawa decomposition (12.17), then we get the following
\begin{eqnarray*}
a&=& v^{1/2}\cos\phi +uv^{-1/2}\sin\phi,\\
b&=& -v^{1/2}\sin\phi +uv^{-1/2}\cos\phi,\\
c&=& v^{-1/2}\sin\phi, \quad   d=v^{-1/2}\cos\phi,\\
u&=&(ac+bd)\left(c^2+d^2\right)^{-1},\quad v=\left(c^2+d^2\right)^{-1},\quad \tan\phi={c\over d}\,    .
\end{eqnarray*}
We set
\begin{equation*}
g_3=g_1g_2=\begin{pmatrix} a_3 & b_3\\ c_3 & d_3 \end{pmatrix}.
\end{equation*}
Since
\begin{equation*}
u_3=(a_3c_3+b_3 d_3)\left(c_3^2+d_3^2\right)^{-1},\quad v=\left(c_3^2+d_3^2\right)^{-1},\quad \tan\phi_3={c_3\over d_3},
\end{equation*}
by an easy computation, we obtain the desired results.
\hfill $\square$

\vskip 0.53cm
Now we use the new coordinates $(\tau=u+iv,\phi)$ with $\tau\in\BH_1$ and $\phi\in [0,2\pi)$ in $SL(2,\BR).$ According to Formulas
(12.11)-(12.13), the projective representation $R_\CM$ of $SL(2,\BR)\hookrightarrow Sp(n,\BR)$ reads in these coordinates $(\tau=u+iv,\phi)$ as
follows:
\begin{equation}
\left[R_\CM (\tau,\phi)f\right](x)=v^{\frac{mn}4}\,e^{u \|x\|_\CM^2 \pi\, i} \left[R_\CM(i,\phi)f\right]\big(v^{1/2}x \big),
\end{equation}
where $f\in L^2\left( \BR^{(m,n)}\right),\ x\in \BR^{(m,n)}$ and
\begin{eqnarray}
 &\left[R_\CM(i,\phi)f\right](x) \hskip 9cm\nonumber\\
 =&\begin{cases}
 f(x) & \text{if $\phi\equiv 0$ mod $2\pi$,}\\
 f(-x) & \text{if $\phi\equiv \pi$ mod $2\pi$,}\\
 (\det\CM)^{\frac n2}\,|\sin\phi|^{-{{mn}\over 2}}\,\int_{\BR^{(m,n)}}e^{B(x,y,\phi,\CM)\pi i}\,f(y)dy
 & \text{if $\phi\not\equiv 0$ mod $\pi$}.
 \end{cases}
\end{eqnarray}
Here $$ B(x,y,\phi,\CM)= { {\left( \|x\|_\CM^2 +  \|y\|_\CM^2\right) \cos\phi - 2(x,y)_\CM} \over {\sin\phi}}. $$
Now we set
$$ S=\begin{pmatrix} 0 & -1\\ 1 & \ \ 0 \end{pmatrix}.$$
We note that
\begin{equation}
\left[ R_\CM \left( i, {\pi\over 2}\right)f\right](x)=\left[ R_\CM(S)f\right](x)=(\det\CM)^{\frac n2}\,\int_{\BR^{(m,n)}} f(y\,)\,e^{-2\, (x,\,y)_\CM\,\pi\,i}\,dy
\end{equation}
for $f\in L^2\left( \BR^{(m,n)}\right).$

\begin{remark}
For Schwartz functions $f\in \mathscr{S} \left(\BR^{(m,n)}\right),$ we have
\begin{equation*}
\lim_{\phi\lrt 0\pm} |\sin\phi|^{-{{mn}\over 2}}\, \int_{\BR^{(m,n)}}e^{B(x,y,\phi,\CM)\,\pi\, i}\,f(y)dy= e^{\pm i\,\pi\, mn/4}f(x)\neq f(x).
\end{equation*}
\noindent
Therefore the projective representation $R_\CM$ is not continuous at $\phi=\nu \pi\,(\nu\in\BZ)$ in general.
If we set
\begin{equation*}
\tilde{R}_\CM (\tau,\phi)= e^{-i\,\pi\, mn\sigma_\phi/4} R_\CM (\tau,\phi),
\end{equation*}
$\tilde{R}_\CM$ corresponds to a unitary representation of the double cover of $SL(2,\BR)$ (cf. (3.5) and \cite{LV}).
This means in particular that
\begin{equation*}
\tilde{R}_\CM (i,\phi)\tilde{R}_\CM (i,\phi')=\tilde{R}_\CM (i,\phi+\phi'),
\end{equation*}
where $\phi\in [0,4\pi)$ parametrises the double cover of $SO(2)\subset SL(2,\BR).$
\end{remark}

\vskip 0.53cm
We observe that for any element $(g,(\la,\mu;\kappa))\in G^J$ with $g\in Sp(n,\BR)$ and $(\la,\mu;\kappa)\in \hrnm$, we have the following decomposition
\begin{equation*}
(g,(\la,\mu;\kappa))=(I_{2n},((\la,\mu)g^{-1};\kappa))\,(g,(0,0;0))=((\la,\mu)g^{-1};\kappa)\cdot g.
\end{equation*}
Thus $Sp(n,\BR)$ acts on $\hrnm$ naturally by
\begin{equation*}
g\cdot (\la,\mu;\kappa)=\left( (\la,\mu)g^{-1};\kappa\right),\qquad g\in Sp(n,\BR), \ (\la,\mu;\kappa)\in \hrnm.
\end{equation*}

\begin{definition}
For any Schwartz function $f\in \mathscr{S} \left(\BR^{(m,n)}\right),$ we define the function $\Theta_f^{[\CM]}$ on the Jacobi group
$SL(2,\BR)\ltimes \hrnm\hookrightarrow G^J$ by
\begin{equation}
\Theta_f^{[\CM]}(\tau,\phi\,;\la,\mu,\kappa):=\sum_{\om\in\BZ^{(m,n)}} \left[ \pi_\CM \left( (\la,\mu;\kappa)(\tau,\phi)\right)f\right] (\omega),
\end{equation}
where $(\tau,\phi)\in SL(2,\BR)$ and $(\la,\mu\,;\kappa)\in \hrnm$. The projective representation $\pi_\CM$ of the Jacobi group $G^J$ was already
defined by Formula (12.8).
More precisely, for $\tau=u+iv\in\BH_1$ and $(\la,\mu;\kappa)\in \hrnm$,
we have
\begin{eqnarray*}
&&\Theta_f^{[\CM]}(\tau,\phi\,;\la,\mu,\kappa)= v^{\frac{mn}4}\,\,e^{2\,\pi\,i\,\s(\CM(\kappa+\mu {}^t\la))}\\
&& \quad\times \sum_{\om\in\BZ^{(m,n)}}\, e^{\pi\,i\,\left\{ u \|\om+\la\|_\CM^2\,+\,2 (\om,\,\mu)_\CM \right\}}\,
\left[  R_\CM (i,\phi)f\right] \left( v^{1/2}(\omega+\l)\right).
\end{eqnarray*}
\end{definition}

\begin{lemma}
We set $f_\phi:=\tilde{R}_\CM (i,\phi)f$ for $f\in \mathscr{S} \left(\BR^{(m,n)}\right)$. Then for any $R>1$, there exists a constant $C_R$ such that
for all $x\in \rmn$ and $\phi\in\BR,$
$$ |f_\phi(x)| \leq C_R \,\left( 1+ \|x\|_\CM\right)^{-R}.$$
\end{lemma}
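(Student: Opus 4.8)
The plan is to combine the periodicity of $\phi\mapsto\tilde{R}_\CM(i,\phi)$ with the explicit oscillatory kernel in (12.21), which after completing the square exhibits $f_\phi$ as a rescaled Fourier transform of a controllably twisted copy of $f$. \emph{Step 1 (reduction to a compact range of $\phi$).} Recall that $\tilde{R}_\CM(i,\cdot)$ obeys the group law $\tilde{R}_\CM(i,\phi)\tilde{R}_\CM(i,\phi')=\tilde{R}_\CM(i,\phi+\phi')$ and that, in the parameter $\phi\in[0,4\pi)$ describing the double cover of $SO(2)$, one has $\tilde{R}_\CM(i,2\pi)=e^{-i\pi mn\sigma_{2\pi}/4}R_\CM(i,2\pi)=(-1)^{mn}\,\mathrm{Id}$ (since $\sigma_{2\pi}=4$ and $R_\CM(i,2\pi)=\mathrm{Id}$). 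Hence $|f_{\phi+2\pi}(x)|=|f_\phi(x)|$, so it suffices to bound $(1+\|x\|_\CM)^{R}|f_\phi(x)|$ uniformly for $\phi$ in the compact interval $[0,2\pi]$. At $\phi\in\{0,\pi,2\pi\}$ the operator $\tilde{R}_\CM(i,\phi)$ is $\pm\mathrm{Id}$ or $f\mapsto\pm f(-\,\cdot\,)$, so the estimate holds trivially there; the work is near and between those values.

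\emph{Step 2 (the estimate away from $\pi\BZ$).} Writing $B(x,y,\phi,\CM)=\cot\phi\,(\|x\|_\CM^2+\|y\|_\CM^2)-\tfrac{2}{\sin\phi}\,(x,y)_\CM$ and using the symmetry of $\CM$ to rewrite $(x,y)_\CM=\sigma(\,{}^t(\CM x)\,y)$ as a Fourier pairing, formula (12.21) together with $\tilde{R}_\CM(i,\phi)=e^{-i\pi mn\sigma_\phi/4}R_\CM(i,\phi)$ gives, for every $\phi\not\equiv0\ (\mathrm{mod}\ \pi)$,
\begin{equation*}
f_\phi(x)=\varepsilon(\phi)\,(\det\CM)^{n/2}\,|\sin\phi|^{-mn/2}\,e^{\pi i\cot\phi\,\|x\|_\CM^2}\;\widehat{g_\phi}\!\left(\tfrac{1}{\sin\phi}\,\CM x\right),\qquad g_\phi(y):=e^{\pi i\cot\phi\,\|y\|_\CM^2}\,f(y),
\end{equation*}
where $|\varepsilon(\phi)|=1$ and $\widehat{h}$ denotes the Euclidean Fourier transform of $h$ on $\BR^{(m,n)}$. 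Let $C\subset\BR\setminus\pi\BZ$ be compact; on $C$ one has $|\sin\phi|\ge c_C>0$ and $|\cot\phi|\le c_C'$, so by the Leibniz rule the Schwartz seminorms of $g_\phi$, and hence those of $\widehat{g_\phi}$, are bounded by finitely many Schwartz seminorms of $f$ with constants independent of $\phi\in C$; in particular $\sup_u(1+\|u\|)^{R}|\widehat{g_\phi}(u)|\le C_{R,C}$ for all $\phi\in C$. Since $\CM$ is positive definite, $\|\tfrac{1}{\sin\phi}\CM x\|\ge\kappa_C\|x\|_\CM$ with $\kappa_C>0$; combining this with the uniform bound $|\sin\phi|^{-mn/2}\le c_C^{-mn/2}$ and the rapid decay of $\widehat{g_\phi}$ yields $(1+\|x\|_\CM)^{R}|f_\phi(x)|\le C'_{R,C}$ uniformly for $\phi\in C$ and $x\in\BR^{(m,n)}$. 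Applied with $C=[\delta,\pi-\delta]\cup[\pi+\delta,2\pi-\delta]$ for a fixed small $\delta\in(0,\pi/4)$, this settles all $\phi\in[0,2\pi]$ bounded away from $\pi\BZ$.

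\emph{Step 3 (the neighbourhoods of $\pi\BZ$, and the main obstacle).} The remaining set $[0,\delta]\cup[\pi-\delta,\pi+\delta]\cup[2\pi-\delta,2\pi]$ is exactly where the identity of Step 2 degenerates, $|\sin\phi|^{-mn/2}\to\infty$; this is the main obstacle, and it is removed by a quarter--turn. Put $g:=\tilde{R}_\CM(i,-\tfrac{\pi}{2})\,f$: since $-\tfrac{\pi}{2}\not\equiv0\ (\mathrm{mod}\ \pi)$, the identity of Step 2 (with $\cot(-\tfrac{\pi}{2})=0$) shows that $g$ equals, up to a unimodular constant and the linear substitution $x\mapsto-\CM x$, the Fourier transform $\widehat{f}$, so $g\in\mathscr{S}(\BR^{(m,n)})$. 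By the group law $f_\phi=\tilde{R}_\CM(i,\phi)f=\tilde{R}_\CM(i,\phi+\tfrac{\pi}{2})\,g$, and as $\phi$ runs over the three neighbourhoods above the angle $\phi+\tfrac{\pi}{2}$ runs over a fixed compact subset of $\BR\setminus\pi\BZ$ (here $\delta<\pi/4$ is used). Applying Step 2 to the Schwartz function $g$ on that compact set gives the required uniform bound there as well. Since $\{0,\pi,2\pi\}$ and the two compact sets of Steps 2 and 3 cover $[0,2\pi]$, this completes the proof.
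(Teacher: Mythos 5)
Your argument is correct, and it is essentially the argument the paper appeals to: the paper's ``proof'' is only the citation to Lemma 4.3 of Marklof \cite{Ma}, whose proof likewise combines the explicit kernel (12.21) --- viewed, as in your Step 2, as a rescaled Fourier transform of $e^{\pi i\cot\phi\,\|y\|_\CM^2}f(y)$ with Schwartz seminorms uniformly controlled for $\phi$ in compacta away from $\pi\BZ$ --- with the quarter-turn/group-law reduction of your Step 3 near $\phi\in\pi\BZ$, so your proposal is a correct self-contained version of the cited argument. The only cosmetic slip is calling the constant in Step 3 ``unimodular'': it is $\varepsilon(-\tfrac{\pi}{2})(\det\CM)^{n/2}$, which is harmless since a constant multiple of a Schwartz function is Schwartz.
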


\vskip 0.25cm\noindent
{\it Proof.} Following the arguments in the proof of Lemma 4.3 in \cite{Ma}, pp.\,428-429, we get the desired result.
\hfill $\square$

\begin{theorem}[Jacobi 1]
Let $\CM$ be a positive definite symmetric integral matrix of degree $m$ such that $\CM \BZ^{(m,n)}=\BZ^{(m,n)}.$ Then
for any Schwartz function $f\in \mathscr{S} \left(\BR^{(m,n)}\right),$ we have
$$\Theta_f^{[\CM]}\left( -{1\over {\tau}}, \,\phi+\textrm{arg}\,\tau\,;-\mu,\la,\kappa \right)=\big(\det\CM\big)^{-{\frac n2}}\,  c_\CM(S,(\tau,\phi)) \,
\Theta_f^{[\CM]}(\tau,\phi\,;\la,\mu,\kappa),$$
where
$$c_\CM(S,(\tau,\phi)):=e^{i\,\pi mn \,\textrm{sign}(\sin\phi\,\sin (\phi+\arg \tau))}.$$
\end{theorem}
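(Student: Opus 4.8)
\vskip 2mm\noindent
\emph{Proof strategy.} The plan is to recognize the substitution $(\tau,\phi;\lambda,\mu,\kappa)\longmapsto(-1/\tau,\,\phi+\arg\tau;\,-\mu,\lambda,\kappa)$ as \emph{left translation}, inside the Jacobi group, by the single symplectic element $\Phi_n(S)$ with $S=\begin{pmatrix}0&-1\\1&0\end{pmatrix}$, and then to exploit the fact that $\Theta_f^{[\CM]}$ is the $\BZ^{(m,n)}$-lattice average of the projective representation $\pi_\CM$ of $G^J$. Writing $h=(\lambda,\mu;\kappa)$ and viewing $(\tau,\phi)$ both as an element of $SL(2,\BR)$ via the Iwasawa parametrization (12.17) and as an element of $Sp(n,\BR)$ via the embedding (12.14), I would first check the group identity
\[
\Phi_n(S)\cdot\big(h\,(\tau,\phi)\big)=\big(\Phi_n(S)\,h\,\Phi_n(S)^{-1}\big)\,\big(\Phi_n(S)\,(\tau,\phi)\big)=(-\mu,\lambda;\kappa)\,\big(-1/\tau,\,\phi+\arg\tau\big)
\]
in $SL(2,\BR)\ltimes H_{\BR}^{(n,m)}\hookrightarrow G^J$: the conjugation action $g\cdot(\lambda,\mu;\kappa)=((\lambda,\mu)g^{-1};\kappa)$ gives $(-\mu,\lambda;\kappa)$ when $g=\Phi_n(S)$, while the action (12.18) of $SL(2,\BR)$ on $\BH_1\times[0,2\pi)$ gives $S\cdot(\tau,\phi)=(-1/\tau,\phi+\arg\tau)$. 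Hence the left side of the theorem is exactly $\Theta_f^{[\CM]}$ evaluated at $\Phi_n(S)\cdot\big(h\,(\tau,\phi)\big)$.

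I would then bring in the cocycle identity for $\pi_\CM$. Taking $h_1=(0,0;0)$, $g_1=\Phi_n(S)$, $h_2=h$, $g_2=(\tau,\phi)$ in the relation $\pi_\CM(h_1g_1h_2g_2)=c_\CM(g_1,g_2)\,\pi_\CM(h_1g_1)\,\pi_\CM(h_2g_2)$ established immediately after (12.8), together with $\pi_\CM(\Phi_n(S))={\mathscr W}_\CM((0,0;0))\,R_\CM(\Phi_n(S))=R_\CM(\Phi_n(S))$, one obtains
\[
\Theta_f^{[\CM]}\big(\Phi_n(S)\cdot(h(\tau,\phi))\big)=\sum_{\omega\in\BZ^{(m,n)}}\big[\pi_\CM(\Phi_n(S)\,h(\tau,\phi))f\big](\omega)=c_\CM\big(\Phi_n(S),(\tau,\phi)\big)\sum_{\omega\in\BZ^{(m,n)}}\big[R_\CM(\Phi_n(S))F\big](\omega),
\]
where $F:=\pi_\CM\big(h(\tau,\phi)\big)f$. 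By (12.13) (equivalently (12.21)), $R_\CM(\Phi_n(S))$ equals $(\det\CM)^{n/2}$ times the Fourier transform for the bilinear form $(x,y)_\CM=\s({}^t\!x\,\CM y)=\langle\CM x,y\rangle$ on $\BR^{(m,n)}\cong\BR^{mn}$, so that $\big[R_\CM(\Phi_n(S))F\big](\omega)=(\det\CM)^{n/2}\,\widehat F(\CM\omega)$ with $\widehat F$ the ordinary Fourier transform.

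Next I would apply Poisson summation. The function $F$ lies in $\mathscr S(\BR^{(m,n)})$ — the metaplectic operators $R_\CM(i,\phi)$ preserve $\mathscr S(\BR^{(m,n)})$, and multiplication by the quadratic phases coming from ${\mathscr W}_\CM(h)$ and from (12.19) keeps one in $\mathscr S$, while the uniform rapid decay quantified in the lemma preceding the theorem secures absolute convergence of the theta series and the interchange of summation and integration. Poisson summation over the self-dual lattice $\BZ^{(m,n)}\subset\BR^{mn}$ then yields $\sum_{\omega}\widehat F(\CM\omega)=\sum_{\nu}\widehat F(\nu)=\sum_{\nu}F(\nu)$, where the hypothesis $\CM\BZ^{(m,n)}=\BZ^{(m,n)}$ is used twice: to reindex the first sum (the map $\omega\mapsto\CM\omega$ being then a bijection of $\BZ^{(m,n)}$), and, being equivalent to $\CM\in GL(m,\BZ)$, to force $\det\CM=1$ so that $(\det\CM)^{n/2}=(\det\CM)^{-n/2}$. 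Collecting everything,
\[
\Theta_f^{[\CM]}\big(\Phi_n(S)\cdot(h(\tau,\phi))\big)=(\det\CM)^{-n/2}\,c_\CM\big(\Phi_n(S),(\tau,\phi)\big)\,\Theta_f^{[\CM]}(\tau,\phi;\lambda,\mu,\kappa),
\]
and it remains only to identify $c_\CM(\Phi_n(S),(\tau,\phi))$ with the quantity $c_\CM(S,(\tau,\phi))$ in the statement, by inserting the lower-left entries of $S$, of $(\tau,\phi)$, and of $S(\tau,\phi)$ — respectively $1$, $v^{-1/2}\sin\phi$, and a positive multiple of $\sin(\phi+\arg\tau)$ by the computation in the proof of Lemma 12.1 — into the explicit cocycle formula (12.16).

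The step I expect to be the main obstacle is purely the bookkeeping of normalizing constants: ensuring the powers of $\det\CM$ from the Fourier normalization in $R_\CM(\Phi_n(S))$ match those in the statement (harmless here, since the hypothesis forces $\det\CM=1$), and, more delicately, pinning down the precise value and sign of $c_\CM(\Phi_n(S),(\tau,\phi))$; this calls for care about whether one works with $R_\CM$ or with the double-cover--normalized $\widetilde R_\CM$ of Remark 12.3, and about the sign conventions in (12.16) and (12.18). A more computational alternative would substitute the explicit series for $\Theta_f^{[\CM]}$ from Definition 12.1 and apply the classical Jacobi theta transformation with characteristics to $\sum_{\omega}e^{\pi i\{u\|\omega+\lambda\|_\CM^2+2(\omega,\mu)_\CM\}}$ directly; this works but is messier and conceals the structural reason for the identity, namely that $\pi_\CM$ descends from the Weil representation and that $\BZ^{(m,n)}$ is unimodular with respect to $\CM$.
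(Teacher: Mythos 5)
Your strategy is sound and is, in substance, the standard proof of this transformation law (the paper itself gives no argument here, deferring to Theorem 6.1 of \cite{YJH16}, whose proof — modelled on Marklof \cite{Ma} — runs along exactly the lines you describe): left translation by $\Phi_n(S)$, the intertwining relation $R_\CM(S)\,{\mathscr W}_\CM(\lambda,\mu;\kappa)={\mathscr W}_\CM(-\mu,\lambda;\kappa)\,R_\CM(S)$, the cocycle relation (12.4)/(12.8), the identification of $R_\CM(S)$ with the $\CM$-twisted Fourier transform via (12.13)/(12.21), and Poisson summation over $\BZ^{(m,n)}$, with the hypothesis $\CM\BZ^{(m,n)}=\BZ^{(m,n)}$ used both to reindex $\sum_\omega \widehat F(\CM\omega)$ and to force $\det\CM=1$. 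Your identification of the relevant lower-left entries is also right: $c_1=1$, $c_2=v^{-1/2}\sin\phi$, and $c_3=v_3^{-1/2}\sin\phi_3$ with $\phi_3\equiv\phi+\arg\tau \ (\mathrm{mod}\ 2\pi)$, so $\mathrm{sign}(c_1c_2c_3)=\mathrm{sign}\bigl(\sin\phi\,\sin(\phi+\arg\tau)\bigr)$. The one step you leave open — pinning down the constant — is genuinely where the friction lies, but not for the reason you fear: carrying it out with formula (12.16) produces $e^{-i\pi mn\,\mathrm{sign}(\sin\phi\,\sin(\phi+\arg\tau))/4}$, which differs from the expression printed in the theorem statement (sign of the exponent and the factor $1/4$); this appears to be a misprint in the statement rather than a defect of your argument, and once $c_\CM(S,(\tau,\phi))$ is understood as the cocycle value itself your proof is complete. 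Two small points worth recording explicitly if you write this up: the $\phi$-variable should be treated modulo $2\pi$ (since $\phi+\arg\tau$ can leave $[0,2\pi)$, and the formulas (12.19)--(12.20) are $2\pi$-periodic), and the interchange of the lattice sum with the Fourier integral is justified by $F=\pi_\CM(h(\tau,\phi))f\in\mathscr{S}(\BR^{(m,n)})$, exactly as you indicate via Lemma 12.3.
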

\vskip 1mm\noindent
{\it Proof.} See Theorem 6.1 in \cite{YJH16}.   \hfill $\Box$

\vskip 0.53cm
\begin{theorem}[Jacobi 2]
Let $\CM=(\CM_{kl})$ be a positive definite symmetric integral $m\times m$ matrix and
let $s=(s_{kj})\in \BZ^{(m,n)}$ be integral. Then we have
\begin{equation*}
\Theta_f^{[\CM]}(\tau+2,\phi\,;\la,s-2\,\la+\mu,\kappa-s\,^t\la)=\Theta_f^{[\CM]}(\tau,\phi\,;\la,\mu,\kappa)
\end{equation*}
for all $(\tau,\phi)\in SL(2,\BR)$ and $(\la,\mu;\kappa)\in \hrnm$.
\end{theorem}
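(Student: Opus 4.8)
The plan is to prove the identity term by term, working directly from the definition
$$\Theta_f^{[\CM]}(\tau,\phi;\la,\mu,\kappa)=\sum_{\om\in\BZ^{(m,n)}}\big[\pi_\CM\big((\la,\mu;\kappa)(\tau,\phi)\big)f\big](\om)$$
together with the explicit action formulas (12.1) and (12.11)--(12.13). First I would observe that, in the Iwasawa coordinates (12.17), passing from $\tau$ to $\tau+2$ with $\phi$ fixed is left multiplication by $\left(\begin{smallmatrix}1&2\\0&1\end{smallmatrix}\right)\in SL(2,\BR)$: by (12.19) this sends $(\tau,\phi)$ to $(\tau+2,\phi)$, since $\arg 1=0$. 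Under the embedding $\Phi_n$ of (12.14) the matrix $\left(\begin{smallmatrix}1&2\\0&1\end{smallmatrix}\right)$ becomes $t(2I_n)=\left(\begin{smallmatrix}I_n&2I_n\\0&I_n\end{smallmatrix}\right)\in Sp(n,\BR)$, and since its lower-left block vanishes the cocycle formula (12.16) is trivial here; hence (12.4) gives $R_\CM(\tau+2,\phi)=R_\CM\big(t(2I_n)\big)\,R_\CM(\tau,\phi)$, so the projective cocycle never intervenes.

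Next I would compare the summands. Put $h:=R_\CM(\tau,\phi)f$, $\mu':=s-2\la+\mu$ and $\kappa':=\kappa-s\,{}^t\!\la$; note that $(\la,\mu';\kappa')\in\hrnm$, because $\kappa'+\mu'\,{}^t\!\la=\kappa+\mu\,{}^t\!\la-2\la\,{}^t\!\la$ is still symmetric. By (12.12), $R_\CM\big(t(2I_n)\big)$ is multiplication by $e^{2\pi i\,\|x\|_\CM^2}$, and by (12.1), ${\mathscr W}_\CM\big((\la,\mu';\kappa')\big)$ translates $x$ to $x+\la$ and multiplies by $e^{\pi i\,\s\left(\CM(\kappa'+\mu'\,{}^t\!\la+2x\,{}^t\!\mu')\right)}$. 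Hence, for each $\om\in\BZ^{(m,n)}$, the $\om$-th summand of $\Theta_f^{[\CM]}(\tau+2,\phi;\la,\mu',\kappa')$ equals
$$e^{\pi i\,\s\left(\CM\left(\kappa'+\mu'\,{}^t\!\la+2\om\,{}^t\!\mu'\right)\right)}\,e^{2\pi i\,\|\om+\la\|_\CM^2}\,h(\om+\la),$$
while the $\om$-th summand of $\Theta_f^{[\CM]}(\tau,\phi;\la,\mu,\kappa)$ equals $e^{\pi i\,\s\left(\CM(\kappa+\mu\,{}^t\!\la+2\om\,{}^t\!\mu)\right)}\,h(\om+\la)$. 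So the theorem reduces to the scalar identity that these two multipliers coincide for every integral $\om$.

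To verify it I would substitute $\mu'$ and $\kappa'$ and expand. The summand $s\,{}^t\!\la$ arising from $\mu'\,{}^t\!\la$ is cancelled exactly by $\kappa'=\kappa-s\,{}^t\!\la$; integrality of $\CM$, $\om$ and $s$ (with $\CM$ symmetric) makes $\s(\CM\om\,{}^t\!\om)$ and $\s(\CM\om\,{}^t\!s)$ integers, so the corresponding factors $e^{2\pi i(\cdots)}$ are $1$; and what is left cancels in pairs: the factors $e^{\mp 4\pi i\,\s(\CM\om\,{}^t\!\la)}$ coming from the $-2\la$ in $\mu'$ and from the cross term of $\|\om+\la\|_\CM^2$, and the factors $e^{\mp 2\pi i\,\s(\CM\la\,{}^t\!\la)}$ coming from $\mu'\,{}^t\!\la$ and from the square term of $\|\om+\la\|_\CM^2$. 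This gives the scalar identity, hence term-by-term equality of the two series; summing over $\om\in\BZ^{(m,n)}$, which is legitimate since both series converge absolutely by Lemma 12.2, completes the proof.

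I expect the computation to be routine apart from the bookkeeping of the $\pi i$ versus $2\pi i$ exponents, the one genuinely load-bearing step being the final cancellation. The delicate point there is that the terms quadratic in the \emph{real} vector $\la$ alone are not killed by integrality and must instead cancel against one another; this is precisely the consistency check that the shifts $-2\la$ in the $\mu$-slot and $-s\,{}^t\!\la$ in the $\kappa$-slot of the statement are the correct ones. The same computation can also be run from the Fourier-type expansion of $\Theta_f^{[\CM]}$ in the coordinates $(\tau=u+iv,\phi)$, where $\tau\mapsto\tau+2$ is simply $u\mapsto u+2$.
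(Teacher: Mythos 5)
Your proof is correct. The paper itself gives no argument for this statement (it simply cites Theorem 6.2 of \cite{YJH16}), and your direct computation is exactly the natural one: unwinding the definition of $\Theta_f^{[\CM]}$, observing that $R_\CM(\tau+2,\phi)=R_\CM\big(t(2I_n)\big)R_\CM(\tau,\phi)$ (which one can also read off immediately from the explicit Iwasawa-coordinate formula, since only $u$ changes), and checking termwise that after the cancellations the discrepancy of exponents is $2\pi i\big(\sigma({}^t\om\CM\om)+\sigma(\CM\om\,{}^ts)\big)$, an integral multiple of $2\pi i$ by integrality of $\CM$, $\om$, $s$ -- which is precisely why the shift is by $2$ rather than $1$ and why the compensating shifts in the $\mu$- and $\kappa$-slots are the ones stated.
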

\vskip 1mm\noindent
{\it Proof.} See Theorem 6.2 in \cite{YJH16}.   \hfill $\Box$

\vskip 0.53cm
\begin{theorem}[Jacobi 3]
Let $\CM=(\CM_{kl})$ be a positive definite symmetric integral $m\times m$ matrix and let $(\la_0,\mu_0;\kappa_0)\in H_\BZ^{(m,n)}$
be an integral element of $\hrnm.$ Then we have
\begin{eqnarray*}
&&\Theta_f^{[\CM]}(\tau,\phi\,;\la+\la_0,\mu+\mu_0,\kappa+\kappa_0+\la_0\,{}^t\mu-\mu_0\,^t\la)\\
&=& e^{\pi\,i\,\s(\CM (\kappa_0+\mu_0\,{}^t\la_0))}
\Theta_f^{[\CM]}(\tau,\phi\,;\la,\mu,\kappa)
\end{eqnarray*}
for all $(\tau,\phi)\in SL(2,\BR)$ and $(\la,\mu;\kappa)\in \hrnm$.
\end{theorem}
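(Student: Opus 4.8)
The plan is to reduce the identity to the explicit Schr\"odinger formula (12.1), the relation $\pi_\CM(hg)=\mathscr W_\CM(h)\,R_\CM(g)$ of (12.8), and the fact that $\mathscr W_\CM$ is a genuine (not merely projective) representation of $\hrnm$. First I would note that, since the elements $(\lambda_0,\mu_0;\kappa_0)$ and $(\lambda,\mu;\kappa)$ of $G^J$ both have trivial symplectic component, their product in $G^J$ is simply the Heisenberg product
$$(\lambda_0,\mu_0;\kappa_0)\circ(\lambda,\mu;\kappa)=\bigl(\lambda+\lambda_0,\ \mu+\mu_0;\ \kappa+\kappa_0+\lambda_0\,{}^t\mu-\mu_0\,{}^t\lambda\bigr),$$
so the argument on the left of the theorem is exactly $(\lambda_0,\mu_0;\kappa_0)\cdot(\lambda,\mu;\kappa)\cdot(\tau,\phi)$ in $G^J$. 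Writing $F:=\pi_\CM\bigl((\lambda,\mu;\kappa)(\tau,\phi)\bigr)f$ and applying (12.8) together with the multiplicativity of $\mathscr W_\CM$ on $\hrnm$, one obtains
$$\pi_\CM\bigl((\lambda_0,\mu_0;\kappa_0)(\lambda,\mu;\kappa)(\tau,\phi)\bigr)f=\mathscr W_\CM\bigl((\lambda_0,\mu_0;\kappa_0)\bigr)\,F,$$
and no cocycle $c_\CM$ enters, because the only symplectic component present, namely $(\tau,\phi)$, is unchanged.

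Next I would substitute (12.1): for every $\omega\in\BZ^{(m,n)}$,
$$\bigl[\mathscr W_\CM\bigl((\lambda_0,\mu_0;\kappa_0)\bigr)F\bigr](\omega)=e^{\pi i\,\sigma(\CM(\kappa_0+\mu_0\,{}^t\lambda_0))}\,e^{2\pi i\,\sigma(\CM\,\omega\,{}^t\mu_0)}\,F(\omega+\lambda_0).$$
Here the integrality hypothesis is used: $\CM$ is integral and $\mu_0\in\BZ^{(m,n)}$, so $\CM\,\omega\,{}^t\mu_0$ has integer entries for each $\omega\in\BZ^{(m,n)}$ and hence $e^{2\pi i\,\sigma(\CM\,\omega\,{}^t\mu_0)}=1$; and since $\lambda_0\in\BZ^{(m,n)}$, summing over $\omega\in\BZ^{(m,n)}$ and re-indexing by $\omega\mapsto\omega-\lambda_0$ yields
\begin{align*}
&\Theta_f^{[\CM]}\bigl(\tau,\phi;\lambda+\lambda_0,\mu+\mu_0,\kappa+\kappa_0+\lambda_0\,{}^t\mu-\mu_0\,{}^t\lambda\bigr)\\
&\qquad=e^{\pi i\,\sigma(\CM(\kappa_0+\mu_0\,{}^t\lambda_0))}\sum_{\omega\in\BZ^{(m,n)}}F(\omega)=e^{\pi i\,\sigma(\CM(\kappa_0+\mu_0\,{}^t\lambda_0))}\,\Theta_f^{[\CM]}(\tau,\phi;\lambda,\mu,\kappa),
\end{align*}
which is the asserted identity.

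The argument contains no genuinely hard step; the two points deserving a word are (i) that the termwise manipulations are legitimate and (ii) that no Weil cocycle survives. For (i) I would invoke the rapid-decay estimate for $f_\phi=\tilde R_\CM(i,\phi)f$ (the decay lemma stated above): combined with the explicit form of $\pi_\CM\bigl((\lambda,\mu;\kappa)(\tau,\phi)\bigr)f$ visible in the expansion of $\Theta_f^{[\CM]}$ recorded after its definition, it shows that $\omega\mapsto F(\omega)$ decays faster than any power of $\|\omega\|_\CM$, so every series in sight converges absolutely and both the rearrangement and the lattice shift are valid. Point (ii) is the conceptual content of the statement: $(\lambda_0,\mu_0;\kappa_0)$ lies in the Heisenberg subgroup, on which the projective representation $\pi_\CM$ restricts to the honest representation $\mathscr W_\CM$, so left-translation by it introduces no scalar --- in contrast to the behaviour under $S$ in the ``Jacobi~1'' formula, where the nontrivial factor $c_\CM(S,(\tau,\phi))$ appears.
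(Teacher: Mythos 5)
Your argument is correct: writing the shifted argument as the Heisenberg product $(\la_0,\mu_0;\kappa_0)\circ(\la,\mu;\kappa)$, using that $\pi_\CM$ restricts to the genuine representation $\mathscr W_\CM$ on $H_\BR^{(n,m)}$ (so no Weil cocycle appears), and then invoking the explicit formula (12.1), the integrality of $\CM,\mu_0$ to kill the cross term $e^{2\pi i\,\sigma(\CM\,\omega\,{}^t\mu_0)}$, and the lattice shift $\omega\mapsto\omega-\la_0$ (justified by the rapid decay of $f_\phi$) gives exactly the stated identity. The paper itself only cites Theorem 6.3 of \cite{YJH16} for the proof, and your computation is the natural one that this reference carries out.
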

\vskip 1mm\noindent
{\it Proof.} See Theorem 6.3 in \cite{YJH16}.   \hfill $\Box$

\vskip 0.35cm
We put $V(m,n)=\BR^{(m,n)}\times \BR^{(m,n)}$. Let
\begin{equation*}
G^{(m,n)}:=SL(2,\BR) \ltimes V(m,n)
\end{equation*}
be the group with the following multiplication law
\begin{equation}
(g_1,(\la_1,\mu_1))\cdot (g_2,(\la_2,\mu_2))=(g_1g_2,(\la_1,\mu_1)g_2+ (\la_2,\mu_2)),
\end{equation}
where $g_1,g_2\in SL(2,\BR)$ and $\la_1,\la_2,\mu_1,\mu_2\in \BR^{(m,n)}$.

\vskip 0.25cm\noindent
We define
$$  \G^{(m,n)}:= SL(2,\BZ)\times H_\BZ^{(n,m)}.$$
Then $\G^{(m,n)}$ acts on $G^{(m,n)}$ naturally through the multiplication law (12.23).
\begin{lemma}
$\G^{(m,n)}$ is generated by the elements
$$ ( S, (0,0)),\quad (T_\flat,(0,s)) \quad \textrm{and}\quad (I_2,(\la_0,\mu_0)),$$
where
$$  S=\begin{pmatrix} 0 & -1 \\ 1 & \ \ 0 \end{pmatrix},\quad T_\flat=\begin{pmatrix} 1 & 1 \\ 0 & 1 \end{pmatrix}\quad \textrm{and}\  s,\la_0,\mu_0\in \BZ^{(m,n)}.$$
\end{lemma}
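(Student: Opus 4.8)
The plan is to deduce the lemma from the classical fact that $SL(2,\BZ)=\langle S,T_\flat\rangle$, keeping careful track of the semidirect-product law (12.23). I regard $\G^{(m,n)}$ as the integral points of $G^{(m,n)}$, so its elements are pairs $(g,(\la,\mu))$ with $g\in SL(2,\BZ)$ and $\la,\mu\in\BZ^{(m,n)}$, multiplied by (12.23). Let $H\leq \G^{(m,n)}$ be the subgroup generated by the three families in the statement; the goal is $H=\G^{(m,n)}$.

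First I would record two immediate consequences of (12.23). Taking both translation parts to be $0$ gives $(g_1,(0,0))(g_2,(0,0))=(g_1g_2,(0,0))$, so $\iota\colon g\longmapsto (g,(0,0))$ is a group homomorphism $SL(2,\BZ)\lrt \G^{(m,n)}$. Hence $\iota\bigl(SL(2,\BZ)\bigr)$ is generated by $\iota(S)=(S,(0,0))$ and $\iota(T_\flat)=(T_\flat,(0,0))$; the first is one of the listed generators of $H$, and the second is the $s=0$ instance of $(T_\flat,(0,s))$, hence also in $H$. Therefore $\iota\bigl(SL(2,\BZ)\bigr)\subseteq H$. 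Secondly, restricting (12.23) to elements $(I_2,(\la,\mu))$ gives $(I_2,(\la_1,\mu_1))(I_2,(\la_2,\mu_2))=(I_2,(\la_1+\la_2,\mu_1+\mu_2))$, so the translation subgroup $N:=\{(I_2,(\la,\mu))\mid \la,\mu\in\BZ^{(m,n)}\}$ is simply $\BZ^{(m,n)}\times\BZ^{(m,n)}$ under addition; in particular it is generated by the family $(I_2,(\la_0,\mu_0))$, so $N\subseteq H$.

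It then remains only to factor an arbitrary element through these two subgroups. For $\gamma=(g,(\la,\mu))\in\G^{(m,n)}$, a last application of (12.23) gives $(g,(0,0))\cdot(I_2,(\la,\mu))=(g,(0,0)\,I_2+(\la,\mu))=(g,(\la,\mu))=\gamma$. The factor $(g,(0,0))$ lies in $\iota\bigl(SL(2,\BZ)\bigr)\subseteq H$ and the factor $(I_2,(\la,\mu))$ lies in $N\subseteq H$, so $\gamma\in H$; thus $H=\G^{(m,n)}$.

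I do not expect a genuine obstacle here: the proof is bookkeeping with (12.23) together with the standard generation of $SL(2,\BZ)$ by $S$ and $T_\flat$, which may simply be cited. The only point worth a sentence is that the $SL_2$-factor acts trivially on a vanishing translation, which is why the generators $(T_\flat,(0,s))$ with general $s$ (as well as the separate listing of $S$ and of the pure translations) are redundant once $N\subseteq H$; they are phrased this way so that the lemma matches the three theta-transformation identities ``Jacobi 1, 2, 3'' to which it will be applied.
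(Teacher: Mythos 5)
Your proof is correct and is essentially the paper's argument: the paper's entire proof is the one-line citation that $SL(2,\BZ)$ is generated by $S$ and $T_\flat$, and you simply write out the routine semidirect-product bookkeeping (the homomorphism $g\mapsto (g,(0,0))$, the translation subgroup, and the factorization $(g,(\la,\mu))=(g,(0,0))\cdot(I_2,(\la,\mu))$ under (12.23)) that the paper leaves implicit. Your reading of $\G^{(m,n)}$ as $SL(2,\BZ)\ltimes\bigl(\BZ^{(m,n)}\times\BZ^{(m,n)}\bigr)$ with the product (12.23) is the one the statement of the lemma requires, notwithstanding the paper's displayed definition via $H_\BZ^{(n,m)}$.
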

\vskip 0.251cm\noindent
{\it Proof.}
Since $SL(2,\BZ)$ is generated by $S$ and $T_\flat$, we get the desired result.
\hfill $\square$

\vskip 0.53cm
\noindent We define
\begin{eqnarray*}
&&\Theta_f^{[\CM]}(\tau,\phi;\la,\mu)\\
&=&  v^{\frac{mn}4}\,\sum_{\om\in\BZ^{(m,n)}}\, e^{\pi\,i\,\left\{ u \|\om+\la\|_\CM^2\,+\,2 (\om,\,\mu)_\CM \right\}}\,
\left[ R_\CM (i,\phi)f\right] \left( v^{1/2}(\omega+\l)\right).\nonumber
\end{eqnarray*}

\begin{theorem}
Let $\G^{(m,n)}_{[2]}$ be the subgroup of $\G^{(m,n)}$ generated by the elements
$$ ( S, (0,0)),\quad (T_*,(0,s)) \quad \textrm{and}\quad (I_2,(\la_0,\mu_0)),$$
where
$$  T_*=\begin{pmatrix} 1 & 2 \\ 0 & 1 \end{pmatrix}\quad \textrm{and}\  s,\la_0,\mu_0\in \BZ^{(m,n)}.$$
Let $\CM=(\CM_{kl})$ be a positive definite symmetric unimodular integral $m\times m$ matrix such that $\CM \BZ^{(m,n)}=\BZ^{(m,n)}.$
Then for $f,g\in \mathscr{S} \left( \BR^{(m,n)}\right),$ the function
\begin{equation*}
\Theta_f^{[\CM]}(\tau,\phi;\la,\mu)\,\overline{\Theta_g^{[\CM]}(\tau,\phi;\la,\mu)}
\end{equation*}
is invariant under the action of $\G^{(m,n)}_{[2]}$ on $G^{(m,n)}$.
\end{theorem}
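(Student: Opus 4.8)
\vskip 1mm\noindent {\it Sketch of the proof.} The plan is to reduce the assertion to the three families of generators of $\G^{(m,n)}_{[2]}$ appearing in its very statement, and then to read off the behaviour of the product $\Theta_f^{[\CM]}\,\overline{\Theta_g^{[\CM]}}$ from the theta transformation formulas (Theorems Jacobi 1, 2 and 3), exploiting that all the automorphy factors there have absolute value $1$.

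First I would record the relation between the two theta sums $\Theta_f^{[\CM]}(\tau,\phi;\la,\mu,\kappa)$ and $\Theta_f^{[\CM]}(\tau,\phi;\la,\mu)$. Comparing the two displayed expansions one gets
$$\Theta_f^{[\CM]}(\tau,\phi;\la,\mu,\kappa)=e^{2\pi i\,\sigma\left(\CM(\kappa+\mu\,{}^t\!\la)\right)}\,\Theta_f^{[\CM]}(\tau,\phi;\la,\mu),$$
and $\sigma(\CM(\kappa+\mu\,{}^t\!\la))\in\BR$, so the exponential factor has modulus one. Hence for every $\kappa$ the product $\Theta_f^{[\CM]}(\tau,\phi;\la,\mu)\,\overline{\Theta_g^{[\CM]}(\tau,\phi;\la,\mu)}$ coincides with $\Theta_f^{[\CM]}(\tau,\phi;\la,\mu,\kappa)\,\overline{\Theta_g^{[\CM]}(\tau,\phi;\la,\mu,\kappa)}$; in particular it is a well-defined function on $G^{(m,n)}=SL(2,\BR)\ltimes V(m,n)$, and to test invariance under an element $\gamma\in\G^{(m,n)}_{[2]}$ it suffices to work with any lift of $\gamma$ to the Jacobi group $SL(2,\BR)\ltimes\hrnm$. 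Absolute convergence of the series defining $\Theta_f^{[\CM]}$ and $\Theta_g^{[\CM]}$ is guaranteed by the earlier lemma asserting the rapid decay of $\tilde R_\CM(i,\phi)f$ (which forces the same decay of $R_\CM(i,\phi)f$, the two differing by a factor of modulus one).

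Next I would go through the three generators. For $\gamma=(S,(0,0))$, Theorem Jacobi 1 says that the associated translation multiplies $\Theta_f^{[\CM]}$ by $(\det\CM)^{-\frac n2}\,c_\CM(S,(\tau,\phi))$; since $\CM$ is positive definite and unimodular we have $\det\CM=1$, and $|c_\CM(S,(\tau,\phi))|=1$, so in the product the total factor is $(\det\CM)^{-n}\,|c_\CM(S,(\tau,\phi))|^2=1$. For $\gamma=(T_*,(0,s))=(T_\flat^2,(0,s))$, the induced coordinate change on a lift is precisely $(\tau,\phi;\la,\mu,\kappa)\mapsto(\tau+2,\phi;\la,\,s-2\la+\mu,\,\kappa-s\,{}^t\!\la)$, under which Theorem Jacobi 2 states that $\Theta_f^{[\CM]}$ is literally invariant, hence so is the product. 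For $\gamma=(I_2,(\la_0,\mu_0))$, lifted to $(I_2,(\la_0,\mu_0;0))$, Theorem Jacobi 3 multiplies $\Theta_f^{[\CM]}$ by the constant $e^{\pi i\,\sigma(\CM\mu_0\,{}^t\!\la_0)}$, of modulus one, which again cancels against its conjugate in $\Theta_f^{[\CM]}\,\overline{\Theta_g^{[\CM]}}$. Since $\G^{(m,n)}_{[2]}$ is generated by these three families, the resulting invariance of $\Theta_f^{[\CM]}\,\overline{\Theta_g^{[\CM]}}$ under each generator propagates to the whole group $\G^{(m,n)}_{[2]}$.

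The main obstacle is the bookkeeping that identifies the action of the abstract generators on $G^{(m,n)}$ with the exact coordinate transformations of Theorems Jacobi 1--3. In particular one has to check that it is $T_*=T_\flat^2$, and not $T_\flat$, that produces the shift of Jacobi 2: under $T_\flat$ one picks up an $\om$-dependent factor $e^{\pi i\,\sigma({}^t\!(\om+\la)\CM(\om+\la))}$ that does not factor out of the sum, so $\Theta_f^{[\CM]}\,\overline{\Theta_g^{[\CM]}}$ genuinely fails to be invariant under $T_\flat$ and the passage to $\G^{(m,n)}_{[2]}$ is essential; and one must note that it is exactly the unimodularity of $\CM$ that makes the factor $(\det\CM)^{-\frac n2}$ of Jacobi 1 trivial. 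Granting this, the statement is an immediate consequence of the cited theorems together with the elementary remark that a complex number of modulus one cancels against its conjugate.
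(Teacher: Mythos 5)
Your proposal is correct and is essentially the argument the paper intends: the paper itself defers the proof to Theorem 6.4 of \cite{YJH16}, but the ingredients it displays (Lemma 12.2 for the generators of $\G^{(m,n)}$ and Theorems 12.1--12.3 for the effect of $S$, of $\tau\mapsto\tau+2$ with the accompanying shift $(\la,\mu,\kappa)\mapsto(\la,s-2\la+\mu,\kappa-s\,{}^t\la)$, and of integral Heisenberg translations on $\Theta_f^{[\CM]}$) are combined exactly as you do, with unimodularity making the factor $(\det\CM)^{-\frac n2}$ trivial and all remaining automorphy factors of modulus one cancelling in $\Theta_f^{[\CM]}\,\overline{\Theta_g^{[\CM]}}$. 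One cosmetic point: for $m>1$ the lift $(I_2,(\la_0,\mu_0;0))$ need not lie in $H_\BZ^{(n,m)}$ unless $\mu_0\,{}^t\la_0$ is symmetric, so one should take, e.g., $\kappa_0=\la_0\,{}^t\mu_0$; as you already observed, any integral lift suffices because the $\kappa$-dependence enters only through a factor of modulus one.
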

\vskip 1mm\noindent
{\it Proof.} See Theorem 6.4 in \cite{YJH16}.   \hfill $\Box$

\end{section}

\vskip 1cm

\begin{section}{{\bf Final Remarks and Open Problems}}
\setcounter{equation}{0}

The Siegel-Jacobi space $\BH_{n,m}$ is a non-symmetric homogeneous space that is important geometrically and arithmetically. As we see in the formula (7.2), the theory of Jacobi forms
is applied in the study of modular forms. The theory of Jacobi forms reduces to that of Siegel modular forms if the index $\CM$ is zero. Unfortunately the theory of
the geometry and the arithmetic of the Siegel-Jacobi space has not been well developed so far.

\vskip 2mm Now we propose open problems related to the geometry and the arithmetic of the Siegel-Jacobi space.

\vskip 2mm\noindent
{\bf Problem 1.} Find the analogue of the Hirzebruch-Mumford Proportionality Theorem.
\vskip 1mm
Let us give some remarks for this problem.
 Before we describe the proportionality
theorem for the Siegel modular variety, first of all we review the
compact dual of the Siegel upper half plane $\BH_n$. We note that
$\BH_n$ is biholomorphic to the generalized unit disk $\BD_n$ of
degree $n$ through the Cayley transform. We suppose that
$\Lambda=(\BZ^{2n},\langle\ ,\ \rangle)$ is a symplectic lattice with a
symplectic form $\langle\ ,\ \rangle.$ We extend scalars of the lattice
$\Lambda$ to $\BC$. Let
\begin{equation*}
{\mathfrak Y}_n:=\left\{\,L\subset \BC^{2n}\,|\ \dim_\BC L=n,\ \
\langle x,y \rangle=0\quad \textrm{for all}\ x,y\in L\,\right\}
\end{equation*}
be the complex Lagrangian Grassmannian variety parameterizing
totally isotropic subspaces of complex dimension $n$. For the
present time being, for brevity, we put $G=Sp(n,\BR)$ and
$K=U(n).$ The complexification $G_\BC=Sp(n,\BC)$ of $G$ acts on
${\mathfrak Y}_n$ transitively. If $H$ is the isotropy subgroup of
$G_\BC$ fixing the first summand $\BC^n$, we can identify
${\mathfrak Y}_n$ with the compact homogeneous space $G_\BC/H.$ We
let
\begin{equation*}
{\mathfrak Y}_n^+:=\big\{\,L\in {\mathfrak Y}_n\,|\ -i \langle x,{\bar
x}\rangle
>0\quad \textrm{for all}\ x(\neq 0)\in L\,\big\}
\end{equation*}
be an open subset of ${\mathfrak Y}_n$. We see that $G$ acts on
${\mathfrak Y}_n^+$ transitively. It can be shown that ${\mathfrak
Y}_n^+$ is biholomorphic to $G/K\cong \BH_n.$ A basis of a lattice
$L\in {\mathfrak Y}_n^+$ is given by a unique $2n\times n$ matrix
${}^t(-I_n\,\,\Om)$ with $\Om\in\BH_n$. Therefore we can identify
$L$ with $\Om$ in $\BH_n$. In this way, we embed $\BH_n$ into
${\mathfrak Y}_n$ as an open subset of ${\mathfrak Y}_n$. The
complex projective variety ${\mathfrak Y}_n$ is called the $
\textit{compact dual}$ of $\BH_n.$

\newcommand\CA{\mathcal A}
\vskip 0.2cm
Let $\G$ be an arithmetic subgroup of $\G_n$. Let
$E_0$ be a $G$-equivariant holomorphic vector bundle over
$\BH_n=G/K$ of rank $r$. Then $E_0$ is defined by the
representation $\tau:K\lrt GL(r,\BC).$ That is, $E_0\cong
G\times_K \BC^r$ is a homogeneous vector bundle over $G/K$. We
naturally obtain a holomorphic vector bundle $E$ over
$\CA_{n,\G}:=\G\ba G/K.$ $E$ is often called an
$\textit{automorphic}$ or $ \textit{arithmetic}$ vector bundle
over $\CA_{n,\G}$. Since $K$ is compact, $E_0$ carries a
$G$-equivariant Hermitian metric $h_0$ which induces a Hermitian
metric $h$ on $E$. According to Main Theorem in \cite{Mf1}, $E$
admits a $ \textit{unique}$ extension ${\tilde E}$ to a smooth
toroidal compactification ${\tilde \CA}_{n,\G}$ of $\CA_{n,\G}$
such that $h$ is a singular Hermitian metric $ \textit{good}$ on
${\tilde \CA}_{n,\G}$. For the precise definition of a
$\textit{good metric}$ on $\CA_{n,\G}$ we refer to \cite[p.\,242]{Mf1}.
According to Hirzebruch-Mumford's Proportionality
Theorem\,(cf.\,\cite[p.\,262]{Mf1}), there is a natural metric on
$G/K=\BH_n$ such that the Chern numbers satisfy the following
relation
\begin{equation*}
c^{\al}\big({\tilde E}\big)=(-1)^{{\frac 12}n(n+1)}\,
\textmd{vol}\left( \G\ba \BH_n\right)\,c^{\al}\big( {\check E}_0\big)
\end{equation*}
for all $\al=(\al_1,\cdots,\al_r)$ with nonegative integers
$\al_i\,(1\leq i\leq r)$ and $\sum_{i=1}^r\al_i={\frac 12}n(n+1),$
where ${\check E}_0$ is the $G_{\BC}$-equivariant holomorphic
vector bundle on the compact dual ${\mathfrak Y}_n$ of $\BH_n$
defined by a certain representation of the stabilizer $
\textrm{Stab}_{G_\BC}(e)$ of a point $e$ in ${\mathfrak Y}_n$.
Here $\textmd{vol}\left( \G\ba \BH_n\right)$ is the volume of
$\G\ba\BH_n$ that can be computed\,(cf.\,\cite{Si1}).

\vskip 5mm
As before we consider the Siegel-Jacobi modular group $\G_{n.m}:=\G_n\ltimes H_\BZ^{(n,m)}$ with $\G_n=Sp(n,\BZ).$
For an arithmetic subgroup $\G$ of $\G_n$, we set
\begin{equation*}
\CA_{n,m,\G}:=\G_*\ba \BH_{n,m}\qquad {\rm with}\ \G_*=\G \ltimes H_\BZ^{(n,m)}.
\end{equation*}

\vskip 2mm\noindent
{\bf Problem 2.} Compute the cohomology $H^\bullet (\CA_{n,m,\G},*)$ of $\CA_{n,m,\G}.$ Investigate the intersection cohomology of $\CA_{n,m,\G}.$

\vskip 2mm\noindent
{\bf Problem 3.} Generalize the trace formula on the Siegel modular variety obtained by Sophie Morel to the universal abelian variety. For her result
on the trace formula on the Siegel modular variety, we refer to her paper, {\it Cohomologie d'intersection des vari{'e}t{\'e}s modulaires de Siegel, suite.}

\vskip 2mm\noindent
{\bf Problem 4.} Develop the theory of the stability of Jacobi forms using the Siegel-Jacobi operator. The theory of the stability involves in
the theory of unitary representations of the infinite dimensional symplectic group $Sp(\infty,\BR)$ and the infinite dimensional unitary group $U(\infty)$.

\vskip 2mm\noindent
{\bf Problem 5.} Compute the geodesics, the distance between two points and curvatures explicitly in the Siegel-Jacobi space $(\BH_{n,m},ds^2_{n,m;A,B}).$

\vskip 2mm
Siegel proved the following theorem for the Siegel space $(\BH_n, ds^2_{n;1}).$
\begin{theorem}\,({\bf Siegel\,\cite{Si1}}).
(1) There exists exactly one geodesic joining two arbitrary points
$\Om_0,\,\Om_1$ in $\BH_n$. Let $R(\Om_0,\Om_1)$ be the
cross-ratio defined by
\begin{equation*}
R(\Om_0,\Om_1)=(\Om_0-\Om_1)(\Om_0-{\overline
\Om}_1)^{-1}(\overline{\Om}_0-\overline{\Om}_1)(\overline{\Om}_0-\Om_1)^{-1}.
\end{equation*}
For brevity, we put $R_*=R(\Om_0,\Om_1).$ Then the symplectic
length $\rho(\Om_0,\Om_1)$ of the geodesic joining $\Om_0$ and
$\Om_1$ is given by
\begin{equation*}
\rho(\Om_0,\Om_1)^2=\s \left( \left( \log { {1+R_*^{\frac 12}
}\over {1-R_*^{\frac 12} } }\right)^2\right),
\end{equation*} where
\begin{equation*}
\left( \log { {1+R_*^{\frac 12} }\over {1-R_*^{\frac 12} }
}\right)^2=\,4\,R_* \left( \sum_{k=0}^{\infty} { {R_*^k}\over
{2k+1}}\right)^2.
\end{equation*}

\noindent (2) For $M\in Sp(n,\BR)$, we set
$${\tilde \Om}_0=M\cdot \Om_0\quad \textrm{and}\quad {\tilde \Om}_1=M\cdot
\Om_1.$$ Then $R(\Om_1,\Om_0)$ and
$R({\tilde\Om}_1,{\tilde\Om}_0)$ have the same eigenvalues.

\vskip 2mm\noindent
\noindent (3) All geodesics are symplectic images of the special
geodesics
\begin{equation*}
\alpha(t)=i\,\textrm{diag}(a_1^t,a_2^t,\cdots,a_n^t),
\end{equation*}
where $a_1,a_2,\cdots,a_n$ are arbitrary positive real numbers
satisfying the condition
$$\sum_{k=1}^n \left( \log a_k\right)^2=1.$$
\end{theorem}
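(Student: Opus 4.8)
The plan is to use that $(\BH_n, ds_{n;1}^2)$ is, with the action (1.1), the Riemannian symmetric space $Sp(n,\BR)/U(n)$, and to reduce the whole theorem to an explicit computation along the diagonal geodesics. First I would exploit transitivity of the action (1.1) to move $\Omega_0$ to $iI_n$ by some $M_0\in Sp(n,\BR)$. Then, applying the Cartan ($KAK$) decomposition of $Sp(n,\BR)$ — equivalently the $U(n)$-action (3.4) on $T_n$ together with the Takagi (Autonne--Takagi) factorization $W=U\Sigma\,{}^tU$ of a complex symmetric matrix, $U\in U(n)$, $\Sigma$ real diagonal $\ge 0$ — I would move $M_0\cdot\Omega_1$ to a point of the form $iD$ with $D=\mathrm{diag}(r_1,\dots,r_n)$, $r_k>0$. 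Thus every ordered pair in $\BH_n$ is $Sp(n,\BR)$-equivalent to the normal form $(iI_n,\,iD)$, and it suffices to prove everything for this pair and then transport it back.

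\textbf{Diagonal geodesics, the length, and part (3).} The curve $\alpha(s)=i\,\mathrm{diag}(r_1^{s},\dots,r_n^{s})$ equals $\exp(s\,\xi_D)\cdot(iI_n)$ for a suitable $\xi_D\in\fp$ (the block-diagonal element of $\fp$ corresponding, under $\Psi$, to the real matrix $\tfrac12\log D$), hence is a geodesic of the symmetric space; substituting $\alpha$ into $ds_{n;1}^2=\s(Y^{-1}d\Omega\,Y^{-1}d\overline\Omega)$ one finds that $ds^2$ restricts to $\bigl(\sum_k(\log r_k)^2\bigr)\,ds^2$ along $\alpha$, so its length is $\sqrt{\sum_k(\log r_k)^2}$. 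More generally, any $\begin{pmatrix}X&Y\\Y&-X\end{pmatrix}\in\fp$ is, by the Takagi factorization of $X+iY$ and the compatibility (3.5) of the $K$-action with (3.4), conjugate under $U(n)$ to a real nonnegative diagonal matrix; hence every geodesic through $iI_n$ is, after an element of $U(n)$, one of the curves $i\,\mathrm{diag}(a_1^{t},\dots,a_n^{t})$, and the unit-speed normalization of the $ds_{n;1}^2$-arclength is exactly $\sum_k(\log a_k)^2=1$. Combined with transitivity of $Sp(n,\BR)$ this is part (3). For uniqueness of the joining geodesic (part (1), existence/uniqueness), I would invoke that $\BH_n$ is a simply connected symmetric space of noncompact type, hence of nonpositive sectional curvature, so by the Cartan--Hadamard theorem $\exp_p$ is a diffeomorphism for every $p$ and there is exactly one geodesic joining two given points; alternatively one argues directly that $s\mapsto\rho(\alpha(s),\Omega_1)^2$ is convex.

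\textbf{The closed formula and part (2).} With $\Omega_0=iI_n$, $\Omega_1=iD$ one computes directly from the definition that $R_*=R(iI_n,iD)=\bigl((D-I_n)(D+I_n)^{-1}\bigr)^2$, with eigenvalues $\bigl(\tfrac{r_k-1}{r_k+1}\bigr)^2\in[0,1)$; consequently the eigenvalues of $\log\tfrac{1+R_*^{1/2}}{1-R_*^{1/2}}$ are $\log r_k$, so $\rho(iI_n,iD)^2=\s\bigl((\log\tfrac{1+R_*^{1/2}}{1-R_*^{1/2}})^2\bigr)=\sum_k(\log r_k)^2$, matching the length found above, and the elementary identity $(\log\tfrac{1+x}{1-x})^2=4x^2(\sum_{k\ge0}x^{2k}/(2k+1))^2$ with $x=R_*^{1/2}$ yields the last displayed formula. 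To extend this to an arbitrary pair, I would prove part (2): from the transformation rule $M\cdot\Omega-M\cdot\Omega'={}^t(C\Omega+D)^{-1}(\Omega-\Omega')(C\Omega'+D)^{-1}$ (and, $M$ being real, the conjugate rule for the barred differences), substituted into each of the four factors of $R(\Omega_1,\Omega_0)$, the intermediate automorphy factors cancel in pairs — after using the symplectic relations among the blocks of $M$ to match the remaining outer factors — exhibiting $R(\widetilde\Omega_1,\widetilde\Omega_0)$ as similar to $R(\Omega_1,\Omega_0)$; hence its eigenvalues, and therefore the trace-expression defining $\rho^2$, are $Sp(n,\BR)$-invariant, and the formula proved in normal form holds for all pairs.

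\textbf{Main obstacle.} The genuinely non-routine points are the automorphy-factor bookkeeping in part (2) — verifying that the product of the four transformed differences telescopes to a conjugation rather than leaving stray factors — and, on the elementary side, making the Step-1 reduction airtight, i.e.\ checking that the Cartan decomposition of $Sp(n,\BR)$ together with the Takagi factorization does put an arbitrary pair into the normal form $(iI_n,iD)$. Once a pair is in this normal form, the remaining statements are an explicit diagonal computation, and the power-series identity is purely formal.
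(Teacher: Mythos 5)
The paper itself contains no proof of this theorem — it only cites Siegel's \emph{Symplectic Geometry}, pp.~289--293 — so your argument can only be measured against Siegel's original one. Your proposal is correct, and its skeleton (move the pair to the normal form $(iI_n,\,iD)$ by transitivity plus the unitary/Takagi normal form, compute $R_*$ and the arclength explicitly there, then transport back using the symplectic invariance of the eigenvalues of the cross-ratio) is exactly Siegel's reduction. Where you genuinely diverge is in how the geodesics are treated: Siegel works with the variational/differential equations of $ds^2_{n;1}$, verifies directly that the curves $i\,\mathrm{diag}(a_1^t,\dots,a_n^t)$ solve them, and obtains existence, uniqueness and minimality by explicit analysis, whereas you invoke the symmetric-space facts that the orbits $\exp(t\xi)\cdot iI_n$ with $\xi\in\fp$ are geodesics and that $\BH_n=Sp(n,\BR)/U(n)$ is a Cartan--Hadamard manifold, so uniqueness and minimality come for free; this buys brevity at the cost of importing nonpositive-curvature machinery that Siegel's computation does not need. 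One simplification to your part (2): no extra relations among the blocks of $M$ are required — substituting $M\!\cdot\!\Omega-M\!\cdot\!\Omega'=\,{}^t(C\Omega'+D)^{-1}(\Omega-\Omega')(C\Omega+D)^{-1}$ (and its complex conjugates, $M$ being real) into the four factors makes all intermediate automorphy factors cancel at once, giving $R({\tilde\Omega}_0,{\tilde\Omega}_1)=\,{}^t(C\Omega_1+D)^{-1}R(\Omega_0,\Omega_1)\,{}^t(C\Omega_1+D)$, an honest conjugation; this yields equality of eigenvalues and hence invariance of the trace of any power series in $R_*$, which is precisely what you need to globalize the normal-form computation of the distance.
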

\noindent The proof of the above theorem can be found in
\cite{Si1}, pp.\,289-293.

\vskip 2mm\noindent
{\bf Problem 6.} Solve Problem 4 and Problem 5 in Section 3.
Express the center of the algebra $\BD (\BH_{n,m})$ of all $G^J$-invariant differential operators on $\BH_{n,m}$ explicitly.
Describe the center of the universal enveloping algebra of the Lie algebra of the Jacobi group $G^J$ explicitly.

\vskip 3mm\noindent
{\bf Problem 7.} Develop the spectral theory of the Laplacian $\Delta_{n,m;A,B}$ on $\G_*\ba \BH_{n,m}$ for an arithmetic subgroup of $\G_{n,m}.$
Balslev \cite{B} developed the spectral theory of the Laplacian $\Delta_{1,1;1,1}$ on $\G_*\ba \BH_{1,1}$ for certain arithmetic subgroup of $\G_{1,1}.$

\vskip 3mm\noindent
{\bf Problem 8.} Develop the theory of harmonic analysis on the Siegel-Jacobi disk $\BD_{n,m}.$

\vskip 3mm\noindent
{\bf Problem 9.} Study unitary representations of the Jacobi group $G^J$. Develop the theory of the orbit method for the Jacobi group $G^J.$

\vskip 3mm\noindent
{\bf Problem 10.} Attach Galois representations to cuspidal Jacobi forms.

\vskip 3mm\noindent
{\bf Problem 11.} Develop the theory of automorphic $L$-function for the Jacobi group $G^J(\mathbb A)$.

\vskip 3mm\noindent
{\bf Problem 12.} Find the trace formula for the Jacobi group $G^J(\mathbb A)$.

\vskip 3mm\noindent
{\bf Problem 13.} Decompose the Hilbert space $L^2\big( G^J(\BQ)\ba G^J(\mathbb A) \big)$ into irreducibles explicitly.


\vskip 3mm\noindent
{\bf Problem 14.} Construct Maass-Jacobi forms. Express the Fourier expansion of a Maass-Jacobi form explicitly.

\vskip 3mm\noindent
{\bf Problem 15.} Investigate the relations among Jacobi forms, hyperbolic Kac-Moody algebras, infinite products, the monster group and the Moonshine (cf.\ \cite{YJH5}).

\vskip 3mm\noindent
{\bf Problem 16.} Provide applications to physics (quantum mechanics, quantum optics, coherent states,$\cdots$), the theory of elliptic genera, singularity theory of K. Saito
etc.

\end{section}

\vskip 10mm
\centerline{\bf Acknowledgements}
\vskip 0.53cm
I would like to give my hearty thanks to Eberhard Freitag and Don Zagier for their advice and their interest in this subject. In particular, it is a pleasure to thank E. Freitag
for letting me know the paper \cite{M1} of Hans Maass.

\vspace{10mm}

\end{document}